\newcommand{\Ao}{\ensuremath{\mathbb{A}^1}}
\newcommand{\topos}[1]{\ensuremath{\mathcal{S}hv(#1)}}
\newcommand{\simplicial}[1]{\ensuremath{\Delta^{op}#1}}
\newcommand{\sms}{\ensuremath{\operatorname{Sm}_S}}
\newcommand{\pt}{\ensuremath{\ast}}
\newcommand{\Hom}{\ensuremath{\operatorname{Hom}}}
\newcommand{\set}{\ensuremath{\mathcal{S}et}}
\newcommand{\holim}{\ensuremath{\operatornamewithlimits{holim}}}
\newcommand{\id}{\ensuremath{\operatorname{id}}}
\newcommand{\haut}{\ensuremath{\operatorname{hAut}_\bullet}}
\newcommand{\diagram}[1]{\ensuremath{\mathcal{#1}}}
\newcommand{\category}[1]{\ensuremath{\mathcal{#1}}}
\newcommand{\colim}{\ensuremath{\operatornamewithlimits{colim}}}
\newcommand{\hocolim}{\ensuremath{\operatornamewithlimits{hocolim}}}
\newcommand{\inthom}{\ensuremath{\mathbf{Hom}}}
\newcommand{\hofib}{\ensuremath{\operatorname{hofib}}}
\newtheorem{mainthm}{Theorem}
\newtheorem{theorem}{Theorem}[section]
\newtheorem{definition}[theorem]{Definition}
\newtheorem{lemma}[theorem]{Lemma}
\newtheorem{proposition}[theorem]{Proposition}
\newtheorem{corollary}[theorem]{Corollary}
\newtheorem{remark}[theorem]{Remark}
\newtheorem{examplecore}[theorem]{Example}}
\newenvironment{example}{\begin{examplecore}}{\hspace*{\fill}
$\square$\par\vspace{.1cm}\end{examplecore}}
\begin{document}

\title{Classifying Spaces and Fibrations of Simplicial Sheaves}

\author{Matthias Wendt}
\email{matthias.wendt@math.uni-freiburg.de}
\address{Mathematisches Institut\\
Albert-Ludwigs-
Uni\-ver\-si\-t\"at Freiburg\\ 
Eckerstra\ss{}e 1\\ 
79104, Freiburg im Breisgau\\ 
Germany}

\subjclass{18F20, 55R15}
\keywords{classifying spaces, fibre sequence, simplicial sheaves}

\begin{abstract}
In this paper, we discuss the construction of
classifying spaces of fibre sequences in model categories of
simplicial sheaves. One construction proceeds via Brown
representability and provides a classification in the pointed model
category. The second construction is given by the classifying 
space of the monoid of homotopy self-equivalences of a
simplicial sheaf and provides the unpointed classification.
\end{abstract}

\maketitle
\setcounter{tocdepth}{1}
\tableofcontents

\section{Introduction}

In this paper, we discuss the classification of fibrations in
categories of simplicial sheaves. As usual, the results are
modelled on the corresponding results for simplicial sets or
topological spaces which we first discuss. 

For simplicial sets, there are two approaches to the construction of
classifying spaces. The first approach uses Brown representability to
classify rooted fibrations, yielding a classification in 
the pointed category. This line of construction has been pursued
in the work of Allaud \cite{allaud:1966:fibre}, Dold \cite{dold:1966},
and Sch\"on \cite{schoen:1982}.
The second approach applies the bar construction to the monoid of
homotopy self-equivalences of the fibre. This is a generalization of
the classifying space of a topological group, cf. \cite{milnor:1956},
further developed by Dold and Lashof for associative $H$-spaces,
cf. \cite{dold:lashof}, and applied to the classification of
fibrations in \cite{stasheff:1963} and
\cite{may:1975:fibrations}. This approach yields a 
classification in the unpointed category. 
The two approaches do not yield equivalent classifying spaces: the
rooted fibrations carry an action of the group of homotopy
self-equivalences, and dividing out this action yields the unpointed
classifying space of the second approach. A survey on construction of
classifying spaces and classification of fibrations can be found in
\cite{stasheff:1970} or \cite{may:1975:fibrations}.

Now we want to explain why  this theory works in the general setting
of simplicial sheaves. On the one hand, \emph{fibrations of simplicial
  sheaves 
  can be glued}. This is of course not true on the nose, but as for
simplicial sets there is a way around this problem. The essence of the
solution is that some kind of ``homotopy distributivity'' holds -- in
some situations it is possible to interchange homotopy limits and
homotopy colimits. The notion of homotopy distributivity is due to
Rezk \cite{rezk:1998:sharp} and can be used to generalize various
classical results on homotopy pullbacks and homotopy colimits, such as
Puppe's theorem or Mather's cube theorem. This theory is developed in
\prettyref{sec:hocolim}. Once such a glueing for fibrations of
simplicial sheaves is developed, it is a simple matter to prove
that the conditions for a version of Brown representability are
satisfied, yielding classifying spaces for analogs of rooted
fibrations of simplicial sheaves. 
On the other hand, \emph{fibrations of simplicial sheaves
  correspond to principal bundles under homotopy
  self-equivalences}. Suitably formulated, we can associate to a
simplicial sheaf $X$ a simplicial sheaf of monoids consisting of
homotopy self-equivalences of $X$. To this monoid we can apply the
bar construction. One can prove that the resulting space classifies
fibre sequences of simplicial sheaves. 

In our approach to the construction of classifying spaces, we
introduce a notion of local triviality of fibrations in the
Grothendieck topology. This condition is one possible generalization of
the usual condition that all fibres of the fibration should have the
homotopy type of the given fibre $F$. In the first approach via Brown
representability, this condition ensures that the fibre functor is
indeed set-valued. In the second approach using the bar construction,
it comes in naturally because we can not talk about fibre sequence if
the base is not pointed.

The main result of the paper is the following theorem:
\begin{mainthm}
Let $T$ be a site.
\begin{enumerate}[(i)]
\item Assume that the category $\simplicial{\topos{T}}$ of simplicial
  sheaves on $T$ is compactly
  generated, cf. \prettyref{def:cptjardine}. 
Let $F$ be a pointed simplicial sheaf on $T$. There 
exists a pointed simplicial sheaf $B^fF$ which classifies locally
trivial fibrations with fibre $F$ up to (rooted) equivalence, i.e. for
each pointed simplicial sheaf $X$ there is a bijection between the set
of equivalence classes of fibre sequences over $X$ with fibre
$F$ and the set of pointed homotopy  classes of maps $X\rightarrow BF$. 
\item Let $F$ be a simplicial sheaf on $T$. There exists a simplicial sheaf
  denoted by   $B(\ast,\haut(F),\ast)$ which classifies locally
  trivial morphisms   with fibre $F$ up to equivalence, i.e. for each
  simplicial sheaf   there is a bijection between the set of
  equivalence   classes of   locally trivial morphisms over $X$ with
  fibre $F$ and the set of   unpointed homotopy classes of maps
  $X\rightarrow   B(\ast,\haut(F),\ast)$. 
\end{enumerate}
\end{mainthm}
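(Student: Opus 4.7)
The plan is to prove the two parts independently. Part (i) is a Brown representability argument, Part (ii) is the classical bar-construction argument, and both fundamentally rest on the homotopy distributivity / glueing of fibrations developed in the preceding section.

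For part (i), I would define a contravariant functor $\Phi_F \colon \Ho_\bullet(\simplicial{\topos{T}})^{op} \to \set$ sending a pointed simplicial sheaf $X$ to the set of (rooted) equivalence classes of locally trivial fibre sequences over $X$ with fibre $F$. The main verification is that $\Phi_F$ satisfies the Brown axioms: (a) the wedge axiom $\Phi_F(\bigvee_i X_i) \cong \prod_i \Phi_F(X_i)$, which reduces to forming fibrations on each wedge summand and glueing along the basepoint using the local triviality assumption; and (b) the Mayer--Vietoris axiom, asserting that classes in $\Phi_F(X_1)$ and $\Phi_F(X_2)$ agreeing on $X_{12}$ come from a class in $\Phi_F(X_1\cup^h_{X_{12}}X_2)$. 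The Mayer--Vietoris step is exactly the glueing of fibre sequences via homotopy distributivity: one glues the total spaces as a homotopy pushout and invokes the Puppe/Mather-type statements of the preceding section to check that the result is again a fibre sequence with fibre $F$, while the stalkwise nature of local triviality guarantees this property is preserved. Given the axioms, Brown representability in the compactly generated category $\simplicial{\topos{T}}$ produces the classifying space $B^fF$.

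For part (ii), I would first make $\haut(F)$ precise as a simplicial sheaf of monoids, formed from a functorial fibrant/cofibrant replacement of $F$ so that composition of self-equivalences is strictly associative and the monoid structure is homotopically correct. Then form the two-sided bar construction, producing a principal $\haut(F)$-bundle $B(\haut(F),\haut(F),\ast) \to B(\ast,\haut(F),\ast)$ with contractible total space. The classification then proceeds in four steps: (a) to a locally trivial morphism $p\colon E \to X$ with fibre $F$, assign its ``frame bundle'', the simplicial sheaf over $X$ whose local sections over $U$ are equivalences from $F_U$ to $p^{-1}(U)$; (b) conversely, send a principal $\haut(F)$-bundle $P \to X$ to the associated Borel construction $P \times_{\haut(F)} F \to X$; (c) verify that these constructions are mutually inverse up to equivalence, using local triviality to give sections locally; (d) show that principal $\haut(F)$-bundles are classified by $B(\ast,\haut(F),\ast)$, which follows from the contractibility of the total space together with a glueing statement for principal bundles, itself obtained from homotopy distributivity.

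The main technical obstacle in both parts is the interaction between the local triviality condition (defined via the Grothendieck topology) and the homotopy colimits used in glueing: one needs that the property ``fibre stalkwise equivalent to $F$'' survives the glueing operation, which is exactly the content of homotopy distributivity. Once that section is in place, the remaining work is a careful adaptation of the simplicial-set arguments of Allaud/Dold/Sch\"on and Stasheff/May to the sheaf-theoretic setting. A secondary technical point, arising only in part (ii), is the construction of $\haut(F)$ as a strictly associative simplicial sheaf of monoids of the correct homotopy type; this is handled by first replacing $F$ functorially and taking the internal mapping sheaf $\inthom(F,F)$ before restricting to the subsheaf of homotopy self-equivalences.
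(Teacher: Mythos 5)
Your overall strategy for part (i) matches the paper's: it likewise defines the pointed functor of equivalence classes of locally trivial fibre sequences, verifies homotopy invariance, the wedge axiom and the Mayer--Vietoris axiom (the latter exactly via Mather's cube theorem, i.e.\ homotopy distributivity), and feeds the result into Jardine's Brown representability theorem for compactly generated model categories. The one substantive point you pass over is that the target of your functor must be shown to be a \emph{set}: the paper devotes a separate argument (\prettyref{prop:fibfunset}) to this, bootstrapping from representables $U\in T$ (where local triviality bounds the possible fibre sequences by coverings and transition data) through $U\times\Delta^n$ and then to an arbitrary base via the canonical homotopy colimit decomposition together with the wedge and Mayer--Vietoris statements. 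Without some such argument Brown representability does not apply, so you cannot simply declare the functor to be $\set$-valued; this is precisely where the local triviality hypothesis earns its keep in part (i).

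For part (ii) you take a genuinely different route. You follow May's principalization scheme: frame bundle of a locally trivial morphism, Borel construction in the other direction, and a separate classification of principal $\haut(F)$-bundles by $B(\ast,\haut(F),\ast)$. The paper instead constructs the map $X\to B(\ast,\haut(F),\ast)$ directly from the local trivializations, in the style of Stasheff--Wirth: the composites of trivializations over $U_i\times_X U_j$ give morphisms to $\haut(F)$ satisfying the cocycle condition only up to coherent homotopy, and these assemble into a map from a hypercovering $U_\bullet\to X$ into the bar construction; since hypercoverings are locally trivial fibrations this yields a map in the homotopy category, well defined because the category of hypercoverings is filtered. The paper's route buys two things: it avoids having to define and classify principal bundles for a sheaf of \emph{monoids} rather than groups (your step (d) is exactly where that cost would be paid, and it would itself reduce to a descent or cocycle argument on a hypercovering, so the hypercovering step is deferred rather than avoided), and it makes explicit where the Grothendieck topology enters, namely that a homotopy class of maps into $B\haut(F)$ is only represented by an honest map after refinement by a hypercovering. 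Your route is closer to May's categories-of-fibres formalism and would be the natural one if you wanted the extra generality of structured fibrations, but as written the classification of principal $\haut(F)$-bundles in step (d) is a nontrivial lemma you would still have to supply.
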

The two classification results can be found in
\prettyref{thm:classify} and \prettyref{thm:classmay}. The main input
in both of them is homotopy distributivity which originally is a
result of Rezk \cite{rezk:1998:sharp}. We give a short proof for topoi
with enough points in \prettyref{prop:hodistrib}. 

One word on the relation between our approach and the classification
results in \cite{dwyer:kan:1984}: given fixed simplicial sheaves $B$
and $F$, analogs of the classification results of
\cite{dwyer:kan:1984} can be used to construct a simplicial set whose
components are in one-to-one correspondence with fibre sequences over
$B$ with fibre $F$. However, these results do not imply that the
various simplicial sets are the sections of one simplicial sheaf. It
is exactly this \emph{internal} classification that we are after. For
this, some sort of homotopy distributivity is needed, as we discuss in
\prettyref{sec:hocolim}.  

Finally, a short sketch of the envisioned applications is in
order. The main motivation for the research reported in this paper
comes from $\Ao$-homotopy theory, which is a homotopy theory for
algebraic varieties defined by Morel and Voevodsky
\cite{morel:voevodsky:1999:a1}. On the one hand, the homotopy
distributivity results from \prettyref{sec:hocolim} have been used in
\cite{toric} to give descriptions of $\Ao$-fundamental groups of
smooth toric varieties. On the other hand, the theory of classifying
spaces developed here allows several results on unstable localization
of fibre sequences for simplicial sets to be carried over to
simplicial sheaves. This is discussed in \cite[Chapter 4]{thesis}
and will be further elaborated in a forthcoming paper. 
The most interesting application, however, is in $\Ao$-homotopy theory. 
The results presented here allow the construction of classifying
spaces, and the localization theory of \cite{thesis} allows us to  obtain
checkable conditions under which  fibrations which are 
locally trivial in the Nisnevich topology are indeed $\Ao$-local. This
will be discussed in \cite{flocal}.

\emph{Structure of the Paper:}
In \prettyref{sec:hocolim}, we develop the necessary preliminaries for
homotopy distributivity which will be needed. \prettyref{sec:fibseq}
we discuss locally trivial fibrations in categories of simplicial
sheaves. Then the two classification results are proved in
\prettyref{sec:one} and \prettyref{sec:two}.

\emph{Acknowledgements:} 
The results presented here are taken from my PhD thesis \cite{thesis}
which was  supervised by Annette Huber-Klawitter. I would like to use
the opportunity to thank her for her encouragement and interest in my 
work. I would also like to thank the following people: Fernando Muro
and Ji\v r\'i Rosick\'y for explanations on Brown representability,
the referee for pointing out some mistakes concerning pointed
vs. unpointed classification, and Paul Goerss, Jim Stasheff and an
editor of JHRS for helpful comments. 

\section{Homotopy Limits and Colimits of Simplicial Sheaves}
\label{sec:hocolim}

\subsection{Model Structures for Simplicial Sheaves}

The global pattern in the theory of
model structures on categories of simplicial sheaves is always the
same: a category of simplicial sheaves behaves in many aspects like 
the category of simplicial sets. This is also evident in the proofs,
which reduce statements about simplicial sheaves to known statements
about simplicial sets. 

The basic definitions of sites and categories of sheaves on them can
be found in \cite{maclane:moerdijk:1992:sheaves}. 
We will freely use these as well as the notions of homotopical
algebra. For the definition of model categories, see
\cite{goerss:jardine:1999:simplicial} with a particular focus on
simplicial sets, as well as \cite{hovey:1998:modelcats} and
\cite{hirschhorn:2003:modelcats}. 

We denote by
$\simplicial{\mathcal{C}}$ the category of simplicial objects in the
category $\mathcal{C}$. In particular, the category of simplicial
sheaves on a site $T$ is denoted by $\simplicial{\topos{T}}$.

The following comprises the main facts about model structures on
simplicial sheaves. 

\begin{theorem}
\label{thm:jardine}
Let $\mathcal{E}$ be a topos. Then the category
$\simplicial{\mathcal{E}}$ of simplicial objects in $\mathcal{E}$ has
a model structure, where the
\begin{enumerate}[(i)]
\item cofibrations are monomorphisms,
\item weak equivalences are detected on a fixed Boolean localization,
\item fibrations are determined by the right lifting property. 
\end{enumerate}

The above definition of weak equivalences does not depend on
the Boolean localization. 

The injective model
structure of Jardine on the category of (pre-)sheaves of simplicial
sets on $T$ is a proper simplicial and cellular model
structure. 
\end{theorem}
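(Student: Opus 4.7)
The plan is to recognize this as a compilation of results due to Joyal (for the general topos case) and Jardine (for the injective model structure on presheaves) and to indicate the main ingredients rather than reconstruct the proofs. The central tool is Barr's theorem: every Grothendieck topos $\mathcal{E}$ admits a surjective geometric morphism $p \colon \mathcal{F} \to \mathcal{E}$ from a Boolean topos $\mathcal{F}$. Since $\mathcal{F}$ is Boolean, sheaves of homotopy groups of simplicial objects in $\mathcal{F}$ can be computed sectionwise. This allows one to declare $f$ in $\simplicial{\mathcal{E}}$ to be a weak equivalence precisely when $p^{*}f$ is a sectionwise weak equivalence in $\simplicial{\mathcal{F}}$.

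The model structure is then produced by the small object argument. The class of cofibrations (monomorphisms) is saturated and generated by a set of inclusions of bounded subobjects, which yields the factorization (cofibration, trivial fibration) immediately. The factorization (trivial cofibration, fibration) and the lifting axioms require a generating set of trivial cofibrations, and producing this is the main technical obstacle: it rests on a bounded cofibration lemma stating that for a sufficiently large cardinal $\kappa$ determined by $\mathcal{E}$, every trivial cofibration $A \hookrightarrow B$ is the filtered union of its $\kappa$-bounded trivial sub-cofibrations. The proof of this lemma uses cardinality bounds that become available after passage to the Boolean localization. Independence of the choice of $p$ then follows by exhibiting an intrinsic characterization: $f$ is a weak equivalence if and only if it induces isomorphisms on all sheaves of simplicial homotopy groups in $\mathcal{E}$, for every local basepoint; this criterion makes no reference to $p$.

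For the injective model structure on simplicial presheaves on a site $T$, existence is the preceding applied to $\mathcal{E}=\topos{T}$, or proved in parallel directly at the presheaf level. Left properness follows because monomorphisms are preserved under pushout and $p^{*}$ is exact, so weak equivalences are stable under pushout along cofibrations; right properness follows from the characterization of fibrations by right lifting. The standard simplicial enrichment satisfies the pushout-product axiom, since tensoring with a simplicial set preserves monomorphisms and sectionwise weak equivalences. Cellularity in the sense of Hirschhorn reduces to compactness, smallness, and the effective monomorphism condition on the generating cofibrations; all three are verified by standard cardinality arguments against the generating sets produced above. In sum, everything beyond the bounded cofibration lemma is either formal or a routine verification, so that lemma is the single nontrivial input I would focus on.
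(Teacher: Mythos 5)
The paper does not actually prove this theorem: its ``proof'' is a list of citations (existence, properness, simpliciality and independence of the Boolean localization to Jardine's \emph{Boolean localization, in practice}, Theorems 18, 24 and 27; cellularity to Hornbostel). Your outline is a faithful sketch of how those cited proofs in fact go: Barr's theorem supplies the surjective geometric morphism from a Boolean topos, weak equivalences are defined by descent along it, the small object argument produces the factorizations once one has the bounded (trivial) cofibration lemma, and independence of the choice of localization follows from the intrinsic characterization of weak equivalences via sheaves of homotopy groups. You are right to isolate the bounded cofibration lemma as the single nontrivial input for existence.

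Two caveats. First, your justification of right properness --- that it ``follows from the characterization of fibrations by right lifting'' --- is not an argument; defining fibrations by a right lifting property does not yield right properness in general. In Jardine's treatment right properness is a theorem in its own right, proved by observing that global fibrations are local fibrations, pulling back to the Boolean localization, and invoking properness of simplicial sets sectionwise there. (Left properness, by contrast, is immediate from the fact that every object is cofibrant, which is cleaner than the pushout argument you give.) Second, cellularity is not a ``routine verification against the generating sets'': checking compactness of the domains and codomains of the generating cofibrations and the effective-monomorphism condition requires genuine work, which is precisely why the paper cites Hornbostel separately for this point. Neither issue affects the overall architecture of your sketch, but as written those two steps are asserted rather than proved.
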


Existence is proved in  \cite[Theorems 18 and
27]{jardine:1996:boolean}. Properness and simpliciality
are proven in \cite[Theorem 24]{jardine:1996:boolean}. The fact that
the model categories are cofibrantly generated is implicit in
Jardine's proofs, though not explicitly stated. 
The combinatoriality follows since categories of sheaves on a
Grothendieck site are locally presentable.
Cellularity is proven in \cite[Theorem 1.4]{hornbostel:2006}.

\subsection{Recollection on Homotopy Limits and Colimits}

Homotopy colimits and limits are homotopy-invariant
versions of the ordinary colimits and limits for
categories. Abstractly, one can define the ordinary colimit of a
diagram $\diagram{X}:\category{I}\rightarrow \category{C}$ as left
adjoint of the diagonal functor
$\Delta_{\category{I}}:\category{C}\rightarrow
\hom(\category{I},\category{C})$, where
$\hom(\category{I},\category{C})$ is the category of
$\category{I}$-diagrams in $\category{C}$. Similarly, the ordinary
limit is the right adjoint of the diagonal, cf. \cite[Section
  X.1]{maclane:1998:categories}. Homotopy colimits and limits are then
  defined as suitable derived functors of the ordinary colimit and
  limit functors.

A general reference for homotopy limits and colimits is
\cite{hirschhorn:2003:modelcats}, in the context of simplicial
sheaves see also \cite{morel:voevodsky:1999:a1}.  
We shortly recall the definition of homotopy limits and colimits. 

\begin{definition}
Let $\mathcal{C}$ be a cofibrantly generated simplicial model
category, and $\mathcal{I}$ be any small category. 
\begin{description}
\item[Colimits] The category $\Hom(\mathcal{I},\mathcal{C})$ of
  $\mathcal{I}$-indexed diagrams in $\mathcal{C}$ has the structure of
  a simplicial model category by taking the weak equivalences and
  fibrations to be the pointwise ones. Then the diagonal
$\Delta:\mathcal{C}\rightarrow\Hom(\mathcal{I},\mathcal{C})$ preserves
fibrations and weak equivalences, and therefore is a right Quillen
functor. Its left adjoint
$\colim:\Hom(\mathcal{I},\mathcal{C})\rightarrow \mathcal{C}$ is thus
a left Quillen functor, and we can define its derived functor
\begin{displaymath}
\hocolim_{\mathcal{I}}=L\colim_{\mathcal{I}}:\mathcal{X}\mapsto
\colim_{\mathcal{I}}Q\mathcal{X}, 
\end{displaymath}
where $Q$ is a cofibrant replacement in the model category
$\Hom(\mathcal{I},\mathcal{C})$. 
\item[Limits] Dually, the category $\Hom(\mathcal{I},\mathcal{C})$
  also has a simplicial model structure  where the weak
  equivalences and cofibrations are the pointwise ones. Then the diagonal
$\Delta:\mathcal{C}\rightarrow\Hom(\mathcal{I},\mathcal{C})$ preserves
cofibrations and weak equivalences, and therefore is a left Quillen 
functor. Its right adjoint
$\lim:\Hom(\mathcal{I},\mathcal{C})\rightarrow \mathcal{C}$ is thus
a right Quillen functor, and we can define its derived functor
\begin{displaymath}
\holim_{\mathcal{I}}=R\lim_{\mathcal{I}}:\mathcal{X}\mapsto
\lim_{\mathcal{I}}R\mathcal{X}, 
\end{displaymath}
where $R$ is a fibrant replacement in the model category
$\Hom(\mathcal{I},\mathcal{C})$. 
\end{description}
We usually denote the homotopy colimit of an $\mathcal{I}$-diagram
$\mathcal{X}$  by $\hocolim_{\category{I}}\mathcal{X}$, the special
case of a homotopy pushout is denoted by  $A\cup_B^hC$. 
Similarly, homotopy limits are usually denoted by
$\holim_{\category{I}}\mathcal{X}$, and the homotopy pullbacks by
$A\times_B^hC$.  
\end{definition}

There are also more concrete constructions of homotopy limits and
colimits. Since we are not going to need these descriptions, we just
refer to \cite[Chapter 18]{hirschhorn:2003:modelcats}.  

The fact that homotopy colimits resp. limits can be defined as left
resp. right derived functors of colimits resp. limits implies that
they are homotopy invariant \cite[Theorem
  18.5.3]{hirschhorn:2003:modelcats}.  

\begin{proposition}
\label{prop:hoinvariance}
Let $\category{C}$ be a simplicial model category, and let
$\category{I}$ be a small category. If $f:\diagram{X}\rightarrow
\diagram{Y}$ is a morphism of $\category{I}$-diagrams of cofibrant
objects in $\category{C}$ which is an objectwise equivalence, then  
\begin{displaymath}
\hocolim_{\category{I}}
f:\hocolim_{\category{I}}\diagram{X}\rightarrow
\hocolim_{\category{I}}\diagram{Y}
\end{displaymath}
is a weak equivalence of cofibrant objects. 

Dually, if $f:\diagram{X}\rightarrow \diagram{Y}$ is a morphism of
$\category{I}$-diagrams in $\category{C}$ which is an objectwise
equivalence of fibrant objects, then 
\begin{displaymath}
\holim_{\category{I}}
f:\holim_{\category{I}}\diagram{X}\rightarrow
\holim_{\category{I}}\diagram{Y}
\end{displaymath}
is a weak equivalence of fibrant objects.
\end{proposition}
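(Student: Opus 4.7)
The plan is to deduce the proposition from Ken Brown's lemma applied to the two Quillen adjunctions set up in the preceding definition. Recall that $\hocolim_{\category{I}}$ is defined as the left derived functor $L\colim_{\category{I}}$, computed by precomposing with a cofibrant replacement $Q$ in the projective model structure on $\Hom(\category{I},\category{C})$, whose weak equivalences and fibrations are detected objectwise.

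For the colimit statement, I would first choose functorial projective cofibrant replacements $Q\diagram{X}\to\diagram{X}$ and $Q\diagram{Y}\to\diagram{Y}$ together with a lift $Qf:Q\diagram{X}\to Q\diagram{Y}$ covering $f$ (obtained either from functoriality of the factorization or by lifting against an acyclic fibration). Since projective weak equivalences in $\Hom(\category{I},\category{C})$ are by definition objectwise, and $f$ is assumed objectwise, the two-out-of-three property applied to the commuting square forces $Qf$ to be a projective weak equivalence between projective cofibrant diagrams. Ken Brown's lemma then tells us that the left Quillen functor $\colim_{\category{I}}$ sends $Qf$ to a weak equivalence in $\category{C}$, and this map is by construction precisely $\hocolim_{\category{I}} f$. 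That the objects $\hocolim_{\category{I}}\diagram{X}$ and $\hocolim_{\category{I}}\diagram{Y}$ are themselves cofibrant in $\category{C}$ follows because left Quillen functors preserve cofibrancy.

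The homotopy limit statement is proved by the formally dual argument, using the injective diagram model structure (with objectwise weak equivalences and cofibrations), fibrant replacements $R\diagram{X}$ and $R\diagram{Y}$, and the dual version of Ken Brown's lemma: right Quillen functors preserve weak equivalences between fibrant objects. The right Quillen functor in question is $\lim_{\category{I}}:\Hom(\category{I},\category{C})\to\category{C}$, whose application to a fibrant replacement of the given objectwise equivalence of fibrant diagrams yields a weak equivalence of fibrant objects in $\category{C}$.

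There is no substantive obstacle here: the proposition is a formal consequence of Ken Brown's lemma, which is why the text above simply defers to Hirschhorn. Any technical weight lies in establishing the projective and injective model structures on $\Hom(\category{I},\category{C})$ and identifying $\colim_{\category{I}}$ and $\lim_{\category{I}}$ as Quillen functors, but this has already been invoked in the preceding definition, so nothing further is required.
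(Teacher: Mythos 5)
Your proposal is correct and follows essentially the same route as the paper, which simply observes that $\hocolim$ and $\holim$ are the left and right derived functors of $\colim$ and $\lim$ and defers to Hirschhorn; your write-up is just the standard Ken Brown argument spelled out for the projective and injective diagram structures. The only remark worth making is that under the derived-functor definition the objectwise cofibrancy (resp.\ fibrancy) hypothesis in the statement is not actually needed --- it matters only for the bar-construction model of $\hocolim$ treated in Hirschhorn's Theorem~18.5.3 --- so your argument is, if anything, marginally more general than the statement requires.
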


Moreover, homotopy colimits and limits interact nicely with the
corresponding left resp. right Quillen functors. 
\begin{proposition}
\label{prop:commute}
Let $F:\mathcal{C}\rightarrow \mathcal{D}$ be a left Quillen
functor. Then the following diagram commutes up to isomorphism:
\begin{center}
  \begin{minipage}[c]{10cm}
    \xymatrix{
      \operatorname{Ho}\Hom(\mathcal{I},\mathcal{C}) \ar[rr]^\hocolim
      \ar[d]_{LF} && \operatorname{Ho}\mathcal{C} \ar[d]^{LF} \\
      \operatorname{Ho}\Hom(\mathcal{I},\mathcal{D}) \ar[rr]_\hocolim &&
      \operatorname{Ho}\mathcal{D}, 
    }
  \end{minipage}
\end{center}
\end{proposition}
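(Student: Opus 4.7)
The plan is to reduce the proposition to the standard fact that left derived functors respect composition, applied to the point-set identity
$$F\circ\colim_{\mathcal{I}}=\colim_{\mathcal{I}}\circ F_*,$$
where $F_*:\Hom(\mathcal{I},\mathcal{C})\to\Hom(\mathcal{I},\mathcal{D})$ denotes postcomposition with $F$. This identity holds because $F$, being a left adjoint, preserves colimits.

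The first step is to equip the diagram categories with the projective model structure (pointwise weak equivalences and fibrations) already used in the definition of $\hocolim$, and to check that $F_*$ is a left Quillen functor for these structures. The generating (trivial) cofibrations of the projective model structure have the form $i\otimes\mathcal{I}(\alpha,-)$ for $i$ a generating (trivial) cofibration of $\mathcal{C}$, and $F_*$ sends such a map to $Fi\otimes\mathcal{I}(\alpha,-)$; since $F$ preserves cofibrations, trivial cofibrations, and coproducts, this is again a projective (trivial) cofibration, so $F_*$ is left Quillen.

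The second step is to invoke the standard compatibility of left derivation with composition from \cite{hirschhorn:2003:modelcats}: for composable left Quillen functors $F_1,F_2$, one has a canonical natural isomorphism $L(F_2\circ F_1)\cong LF_2\circ LF_1$. Applying this to the two equal compositions above yields
$$LF\circ\hocolim_{\mathcal{I}}\cong L(F\circ\colim_{\mathcal{I}})=L(\colim_{\mathcal{I}}\circ F_*)\cong\hocolim_{\mathcal{I}}\circ LF_*.$$
A final bookkeeping point is to identify $LF_*$ with the evident functor on $\Ho\Hom(\mathcal{I},\mathcal{C})$ that applies $LF$ objectwise: if $Q\mathcal{X}\to\mathcal{X}$ is a projective cofibrant replacement, then $Q\mathcal{X}$ is in particular objectwise cofibrant, so $F_*Q\mathcal{X}$ agrees objectwise, up to weak equivalence, with $LF\circ\mathcal{X}$ by \prettyref{prop:hoinvariance}.

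The main obstacle is the verification that $F_*$ is left Quillen for the projective structure; everything after that is formal. This in turn requires cofibrant generation of the source and target diagram categories, which is guaranteed by \prettyref{thm:jardine} for the simplicial sheaf categories at hand. One could alternatively bypass the projective structure entirely by constructing, for each $\mathcal{X}$, a suitable cofibrant replacement tailored to $F$ and invoking homotopy invariance of $\hocolim$, but this only reshuffles the same content.
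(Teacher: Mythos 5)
Your argument is correct. The paper itself offers no proof of \prettyref{prop:commute} --- it is stated as a standard fact, with the reference to \cite{morel:voevodsky:1999:a1} serving only as an example of its use --- so there is no ``paper proof'' to compare against; your write-up supplies the standard argument that the author leaves implicit. The decomposition into (a) the point-set identity $F\circ\colim_{\mathcal{I}}=\colim_{\mathcal{I}}\circ F_*$, (b) $F_*$ being left Quillen for the projective structures, (c) compatibility of left derivation with composition of left Quillen functors, and (d) the identification of $LF_*$ with the objectwise derived functor, is exactly the intended content, and each step is justified. One small simplification worth noting: rather than chasing generating cofibrations in step (b), you can observe directly that the right adjoint $G_*$ of $F_*$ (postcomposition with the right adjoint $G$ of $F$) preserves pointwise fibrations and pointwise trivial fibrations, hence $F_*$ is left Quillen; this avoids any appeal to the explicit form of the generators, though it still uses cofibrant generation to know the projective structure exists, which is built into the paper's standing hypotheses on $\mathcal{C}$.
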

One example of this situation is the relation between 
homotopy colimits and hom-functors as stated in \cite[Lemma
  2.1.19]{morel:voevodsky:1999:a1}.

Finally, we state a standard fact on homotopy pullbacks, cf. also
\cite[Lemma II.8.22]{goerss:jardine:1999:simplicial}:  

\begin{lemma}
\label{lem:hopullback}
Let $\mathcal{C}$ be a proper model category, and let the following
commutative diagram be given:
\begin{center}
  \begin{minipage}[c]{10cm}
    \xymatrix{
      X_1\ar[r]\ar[d] & X_2 \ar[r]\ar[d] & X_3\ar[d] \\
      Y_1\ar[r] & Y_2 \ar[r] & Y_3.
    }
  \end{minipage}
\end{center}
If the inner squares are homotopy pullback squares, then so is the
outer. If the outer square and the right inner square are homotopy
pullback squares, then so is the left inner square.
\end{lemma}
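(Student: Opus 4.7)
The strategy is the classical one: use (right) properness to replace the given diagram by a weakly equivalent one in which the vertical maps are fibrations, so that the relevant homotopy pullbacks are computed by strict pullbacks, and then quote the elementary pasting lemma for strict pullbacks in any category with pullbacks.

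More concretely, I would first factor $X_3 \to Y_3$ as a trivial cofibration $X_3 \hookrightarrow X_3'$ followed by a fibration $X_3' \twoheadrightarrow Y_3$. By right properness, the map $X_3 \to X_3'$ induces a weak equivalence on strict pullbacks $Y_i \times_{Y_3} X_3 \to Y_i \times_{Y_3} X_3'$ for $i = 1, 2$ once we also know these strict pullbacks compute homotopy pullbacks — which is exactly the content of the standard characterisation of homotopy pullbacks in a right proper model category. So without loss of generality we may assume $X_3 \to Y_3$ is a fibration. Pulling back, $X_2' := Y_2 \times_{Y_3} X_3 \to Y_2$ is then a fibration, and the homotopy pullback of the right square is represented by $X_2'$ together with the canonical comparison map $X_2 \to X_2'$, which is a weak equivalence precisely when the right square is a homotopy pullback. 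Applying the same argument on the left, the homotopy pullback of the left square (resp.\ of the outer rectangle) is represented by the strict pullback $X_1' := Y_1 \times_{Y_2} X_2'$ (resp.\ $Y_1 \times_{Y_3} X_3$), with comparison maps from $X_1$.

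For part (i), assume both inner squares are homotopy pullbacks, so $X_2 \to X_2'$ and $X_1 \to Y_1 \times_{Y_2} X_2'$ are weak equivalences. Composing with the strict pullback pasting identity
\begin{displaymath}
Y_1 \times_{Y_2} X_2' \;=\; Y_1 \times_{Y_2} (Y_2 \times_{Y_3} X_3) \;\cong\; Y_1 \times_{Y_3} X_3,
\end{displaymath}
which holds on the nose, gives a weak equivalence from $X_1$ to the strict model of the homotopy pullback of the outer rectangle. For part (ii), knowing that the outer rectangle and the right inner square are homotopy pullbacks means that $X_1 \to Y_1 \times_{Y_3} X_3$ and $X_2 \to X_2'$ are weak equivalences. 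Using the same strict pullback identity and the fact that the pullback functor $Y_1 \times_{Y_2} (-)$ along a fibration preserves weak equivalences (right properness), we obtain that $X_1 \to Y_1 \times_{Y_2} X_2' = Y_1 \times_{Y_2} X_2$ (up to weak equivalence) is a weak equivalence, which is exactly the statement that the left square is a homotopy pullback.

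The only real pitfall is making sure that the replacements and the various pastings are organised so that right properness is invoked in each step precisely when one pulls back a weak equivalence along a fibration; once the maps have been fibrantly replaced as above, every step reduces either to a strict pullback pasting or to one application of right properness, so there are no further obstacles.
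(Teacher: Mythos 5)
Your argument is correct and is the standard one: reduce to strict pullbacks of fibrations via right properness and the characterisation of homotopy pullbacks through a factorisation into a weak equivalence followed by a fibration, then invoke the strict pasting identity $Y_1\times_{Y_2}(Y_2\times_{Y_3}X_3)\cong Y_1\times_{Y_3}X_3$. The paper gives no proof of its own here, citing only \cite[Lemma II.8.22]{goerss:jardine:1999:simplicial}, and your proof is essentially the argument found there; the one point worth making explicit is that in both directions you must first use the hypothesis on the right-hand square to know that $X_2\to X_2'$ is a weak equivalence before $X_2\to X_2'\to Y_2$ qualifies as an admissible factorisation for testing the left-hand square.
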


\subsection{Functorialities} 

We first recall the basic result that geometric morphisms of
Grothendieck topoi induce Quillen functors. This is basically a
reformulation of  \cite[Proposition 2.1.47]{morel:voevodsky:1999:a1}.  

\begin{proposition}
\label{prop:pair}
Let $f:\mathcal{F}\rightarrow \mathcal{E}$ be a geometric morphism of
Grothendieck topoi. We also denote by
$f^\ast:\simplicial{\mathcal{E}}\rightarrow \simplicial{\mathcal{F}}$
and $f_\ast:\simplicial{\mathcal{F}}\rightarrow
\simplicial{\mathcal{E}}$ the induced functors on the categories of
simplicial sheaves. 
Then $(f^\ast,f_\ast)$ is a Quillen pair, i.e. $f^\ast$ preserves
cofibrations and trivial cofibrations and $f_\ast$ preserves
fibrations and trivial fibrations.
\end{proposition}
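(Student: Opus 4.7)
The plan is to verify the two defining properties of a Quillen adjunction for the left adjoint $f^\ast$, namely that $f^\ast$ preserves cofibrations and trivial cofibrations; preservation of fibrations and trivial fibrations by $f_\ast$ then follows by the usual adjunction argument using the lifting characterizations.

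For cofibrations, I appeal to \prettyref{thm:jardine}(i): cofibrations in both $\simplicial{\mathcal{E}}$ and $\simplicial{\mathcal{F}}$ are precisely the monomorphisms. Since $f^\ast$ is the inverse image part of a geometric morphism, it preserves finite limits by definition. A map $g\colon X\to Y$ is a monomorphism if and only if the diagonal $X\to X\times_Y X$ is an isomorphism, a condition manifestly preserved by any finite-limit-preserving functor. Hence $f^\ast$ sends monomorphisms to monomorphisms, i.e.\ preserves cofibrations. Given this, preservation of trivial cofibrations reduces to showing that $f^\ast$ preserves weak equivalences.

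For weak equivalences, I would use the Boolean localization characterization from \prettyref{thm:jardine}(ii). Fix a Boolean localization $q\colon \mathcal{B}\to\mathcal{F}$, so that a map $h$ in $\simplicial{\mathcal{F}}$ is a weak equivalence iff $q^\ast h$ is a weak equivalence in $\simplicial{\mathcal{B}}$. For a weak equivalence $g\colon X\to Y$ in $\simplicial{\mathcal{E}}$ we then have $q^\ast f^\ast g = (fq)^\ast g$, where $fq\colon \mathcal{B}\to\mathcal{E}$ is a geometric morphism from a Boolean topos. Using the independence of the definition of weak equivalence from the chosen Boolean localization (part of \prettyref{thm:jardine}), I would argue that any geometric morphism from a Boolean topos detects weak equivalences, so that $(fq)^\ast g$ is a weak equivalence and we are done.

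The main obstacle is this last step: comparing Boolean localizations across $f$. The composite $fq$ need not literally be a Boolean localization of $\mathcal{E}$, so one must either exhibit a factorization through a genuine Boolean localization of $\mathcal{E}$ that dominates $fq$, or replace the Boolean-localization test with an equivalent characterization in terms of sheaves of simplicial homotopy groups $\pi_n^{\mathrm{sh}}$ obtained via sheafification of the presheaf-level homotopy groups of a fibrant replacement. The latter route is probably cleanest, since $f^\ast$ commutes with sheafification (being a left adjoint that preserves finite limits) and with the formation of the relevant finite-limit diagrams, so preservation of the sheaves $\pi_n^{\mathrm{sh}}$, and thus of weak equivalences, follows formally.
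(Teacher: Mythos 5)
Your first step is fine and is the standard one: cofibrations are monomorphisms (\prettyref{thm:jardine}), a monomorphism is detected by the finite-limit condition that $X\to X\times_Y X$ be an isomorphism, and the inverse image part of a geometric morphism is left exact, so $f^\ast$ preserves cofibrations; the statement about $f_\ast$ is then the usual adjunction argument. Note, for calibration, that the paper itself offers no proof at all here --- it simply cites \cite[Proposition 2.1.47]{morel:voevodsky:1999:a1} --- so any complete argument you give is already more than the text supplies.

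The genuine gap is exactly where you flag it: preservation of weak equivalences. Your Boolean-localization route does not close. Writing $q^\ast f^\ast g=(fq)^\ast g$ only helps if you know that $(fq)^\ast$ sends weak equivalences of $\simplicial{\mathcal{E}}$ to weak equivalences of $\simplicial{\mathcal{B}}$, and that is an instance of the very statement being proved; it would follow if $fq$ were (or factored through) a Boolean localization of $\mathcal{E}$, but that requires $fq$ to be surjective, which fails in general (e.g.\ $f$ a single point of $\mathcal{E}$). The ``independence of the Boolean localization'' clause of \prettyref{thm:jardine} compares two \emph{surjective} morphisms from Boolean topoi and cannot be applied to $fq$. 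Your second route is the right idea but does not ``follow formally'' as stated: the sheaves $\pi_n^{\mathrm{sh}}$ are defined using a fibrant replacement, and $f^\ast$, being a left adjoint, has no reason to commute with injective fibrant replacement. The standard repair (this is essentially Jardine's argument) is to replace the fibrant replacement by Kan's \Exinf{} functor, which is built from finite limits and filtered colimits and hence commutes with $f^\ast$ up to natural isomorphism, lands in locally fibrant objects, and preserves and reflects local weak equivalences; one then checks that the combinatorial homotopy sheaves of a locally fibrant object are constructed from finite limits, colimits and sheafification, all preserved by $f^\ast$. Even then one is quietly invoking the equivalence between the Boolean-localization definition of weak equivalence used in \prettyref{thm:jardine} and the homotopy-sheaf characterization, which is itself a nontrivial theorem of \cite{jardine:1996:boolean} rather than a formality. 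So the architecture of your proof is sound, but the central step needs either the \Exinf{} argument spelled out or an explicit citation in place of the sketch.
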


Finally, we recall that weak equivalences are reflected along 
surjective geometric morphisms.

\begin{proposition}
\label{prop:reflect}
Let $f:\mathcal{E}'\rightarrow \mathcal{E}$ be a surjective
geometric morphism, and let $g:A\rightarrow B$ be a morphism in
$\mathcal{E}$. Then $g$ is a weak equivalence if $f^\ast g:f^\ast
A\rightarrow f^\ast B$ is a weak equivalence in $\mathcal{E}'$.
\end{proposition}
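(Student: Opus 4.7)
The plan is to reduce the statement to the defining property of weak equivalences from \prettyref{thm:jardine}: a morphism of simplicial sheaves is a weak equivalence if and only if its inverse image under any Boolean localization is a local weak equivalence, and this notion is independent of the chosen localization. The task then becomes to manufacture, from a Boolean localization of $\mathcal{E}'$, a Boolean localization of $\mathcal{E}$ that factors through $f$, so that the hypothesis on $f^\ast g$ feeds directly into the characterization of weak equivalences in $\mathcal{E}$.

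Concretely, I would fix a Boolean localization $p:\mathcal{B}\rightarrow\mathcal{E}'$, so that $\mathcal{B}$ is the topos of sheaves on a complete Boolean algebra and $p$ is surjective, and form the composite geometric morphism $q := f\circ p:\mathcal{B}\rightarrow\mathcal{E}$. Surjectivity of a geometric morphism amounts to its inverse image functor being faithful (equivalently, conservative); the composite of faithful functors is faithful, so $q$ is again surjective, while its source $\mathcal{B}$ remains Boolean by construction. Hence $q$ qualifies as a Boolean localization of $\mathcal{E}$ in the sense of \prettyref{thm:jardine}.

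Combining the two steps, assume $f^\ast g$ is a weak equivalence in $\simplicial{\mathcal{E}'}$. By \prettyref{thm:jardine} applied to $\mathcal{E}'$ with localization $p$, the morphism $p^\ast(f^\ast g)$ is a local weak equivalence in $\simplicial{\mathcal{B}}$. Functoriality of inverse images gives $p^\ast\circ f^\ast = (f\circ p)^\ast = q^\ast$, so $q^\ast g$ is a local weak equivalence. Invoking \prettyref{thm:jardine} once more, now for $\mathcal{E}$ with the localization $q$, we conclude that $g$ is a weak equivalence in $\simplicial{\mathcal{E}}$.

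The only genuinely delicate point is the assertion that the composite $q=f\circ p$ really is a Boolean localization in Jardine's sense, and this reduces to the closure of surjective geometric morphisms under composition, together with the independence-of-localization clause of \prettyref{thm:jardine}. Both are standard in Jardine's treatment of Boolean localization, so I expect no further obstacle beyond keeping track of the direction of inverse image functors.
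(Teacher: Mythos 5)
Your proposal is correct and follows essentially the same route as the paper: compose a Boolean localization $\mathcal{B}\rightarrow\mathcal{E}'$ with the surjective $f$ to obtain a Boolean localization of $\mathcal{E}$, then invoke the independence-of-localization clause of \prettyref{thm:jardine}. The only difference is that you spell out the surjectivity of the composite (via faithfulness of inverse images) more explicitly than the paper does, which is a welcome clarification rather than a divergence.
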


\begin{proof}
If $f$ is surjective, then any Boolean localization of
$\mathcal{E}'$ is a Boolean localization of $\mathcal{E}$, because a
Boolean localization of $\mathcal{E}$ is simply a surjective geometric 
morphism $\mathcal{B}\rightarrow \mathcal{E}$, where $\mathcal{B}$ is
the topos of sheaves on a complete Boolean algebra. In
\cite{jardine:1996:boolean}, it was proved that the weak equivalences
which are defined via Boolean localizations are independent of the
Boolean localization. 

A morphism $f:A\rightarrow B$ is thus a weak equivalence in
$\mathcal{E}$ if it is a morphism after pullback along
$f^\ast:\mathcal{E}\stackrel{g^\ast}{\rightarrow}
\mathcal{E}'\rightarrow\mathcal{B}$, 
where the latter morphism is a chosen Boolean localization of
$\mathcal{E}'$. But by definition, this is equivalent to the fact that
$g^\ast f$ is a weak equivalence in $\mathcal{E}'$. This proves the
claim. 
\end{proof}

\subsection{Homotopy Colimits} 

In this subsection, we recall the behaviour of homotopy colimits under
the inverse image part of a geometric morphism. The inverse image
preserves homotopy colimits, and reflects them if the geometric
morphism is surjective. 

\begin{proposition}
\label{prop:colim1}
Let $\mathcal{E}$ be a topos, and let $f:\mathcal{E}'\rightarrow
\mathcal{E}$ be a geometric morphism. Then
$f^\ast:\simplicial{\mathcal{E}}\rightarrow \simplicial{\mathcal{E}'}$
preserves homotopy colimits.  
\end{proposition}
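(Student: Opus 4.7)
The plan is to recognize the statement as a direct consequence of \prettyref{prop:commute}, once the left Quillen nature of $f^\ast$ established in \prettyref{prop:pair} is transported to the diagram categories used in the definition of $\hocolim$.

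First, I would observe that applying $f^\ast$ pointwise to an $\mathcal{I}$-indexed diagram gives a functor $\Hom(\mathcal{I}, \simplicial{\mathcal{E}}) \to \Hom(\mathcal{I}, \simplicial{\mathcal{E}'})$ whose right adjoint is pointwise $f_\ast$, and I would verify that this pointwise $f^\ast$ is itself a left Quillen functor for the model structure on diagrams used in the definition of $\hocolim$ (the one with weak equivalences and fibrations detected pointwise). This reduces to \prettyref{prop:pair}: the generating (trivial) cofibrations of the projective model structure are obtained from those of the underlying model category by applying the left adjoints to evaluation at each object of $\mathcal{I}$, and since $f^\ast$ is a left adjoint it commutes with these left adjoints, while \prettyref{prop:pair} ensures that it sends (trivial) cofibrations of $\simplicial{\mathcal{E}}$ to those of $\simplicial{\mathcal{E}'}$.

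Second, since $f^\ast$ is a left adjoint it strictly commutes with the ordinary colimit functor, so one obtains a strictly commuting square of left Quillen functors with ordinary colimit on the horizontal arrows and pointwise $f^\ast$ on the vertical arrows. Applying \prettyref{prop:commute} (equivalently, invoking the uniqueness of total left derived functors of the two equal composites) then produces a natural isomorphism
$$Lf^\ast \circ \hocolim_\mathcal{I} \cong \hocolim_\mathcal{I} \circ Lf^\ast$$
of functors $\operatorname{Ho}\Hom(\mathcal{I},\simplicial{\mathcal{E}}) \to \operatorname{Ho}\simplicial{\mathcal{E}'}$, which is the claim that $f^\ast$ preserves homotopy colimits.

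The only step requiring any real thought is the verification that the pointwise $f^\ast$ is left Quillen for the projective model structure on diagrams; I expect this to be the main (though routine) bookkeeping point, with the rest being a formal consequence of the compatibility of left derived functors with colimits recorded in \prettyref{prop:commute}.
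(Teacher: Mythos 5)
Your proposal is correct and follows exactly the paper's own (two-line) argument: $f^\ast$ is left Quillen by Proposition \ref{prop:pair}, and the commutation with homotopy colimits then follows from Proposition \ref{prop:commute}. You simply make explicit the routine bookkeeping (pointwise $f^\ast$ being left Quillen on diagram categories and strictly commuting with ordinary colimits) that the paper leaves implicit.
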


\begin{proof}
$f^\ast$ is a left Quillen functor, cf. Proposition \ref{prop:pair}.
The result follows from Proposition \ref{prop:commute}.
\end{proof}

\begin{proposition}
\label{prop:colim2}
Let $\mathcal{E}$ be a topos, let $\mathcal{I}$ be a small category,
and let $f:\mathcal{E}'\rightarrow \mathcal{E}$ be a geometric
morphism. If $f$ is surjective, then 
$f^\ast:\simplicial{\mathcal{E}}\rightarrow \simplicial{\mathcal{E}'}$ 
reflects homotopy colimits. In other words,
$\mathcal{X}:\mathcal{I}\rightarrow \simplicial{\mathcal{E}}$ is a
homotopy colimit diagram if and only if
$f^\ast\mathcal{X}:\mathcal{I}\rightarrow \simplicial{\mathcal{E}'}$
is a homotopy colimit diagram. 
\end{proposition}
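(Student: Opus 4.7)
The plan is to reduce the reflection statement to the two results already at hand: that $f^*$ preserves homotopy colimits (\prettyref{prop:colim1}) and that $f^*$ reflects weak equivalences because $f$ is surjective (\prettyref{prop:reflect}). The preservation direction of the biconditional is immediate from \prettyref{prop:colim1}, so the real content is reflection.

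First, I would fix the formulation: a diagram $\mathcal{X}$ on $\mathcal{I}$ equipped with a cocone to an object $Y\in\simplicial{\mathcal{E}}$ is a homotopy colimit diagram precisely when the induced comparison morphism
\[
\alpha\colon \hocolim_{\mathcal{I}}\mathcal{X}\longrightarrow Y
\]
is a weak equivalence in $\simplicial{\mathcal{E}}$. Under this description, reflection of homotopy colimits is the statement that $\alpha$ is a weak equivalence whenever its image under $f^*$ is.

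Next, I would apply $f^*$ to the whole picture. By \prettyref{prop:colim1} the functor $f^*$ commutes with $\hocolim_{\mathcal{I}}$ in the homotopy category, and this commutation is natural in the diagram and compatible with cocones (both $\hocolim$ and $f^*$ are left Quillen, so the identification arises from the canonical comparison of left derived functors $L(f^*)\circ L\colim \simeq L\colim \circ L(f^*)$ of \prettyref{prop:commute}). Consequently the comparison map for the transported diagram is canonically identified with $f^*\alpha$:
\[
\hocolim_{\mathcal{I}} f^*\mathcal{X}\;\simeq\; f^*\hocolim_{\mathcal{I}}\mathcal{X}\xrightarrow{\;f^*\alpha\;} f^*Y.
\]
Thus $f^*\mathcal{X}$ is a homotopy colimit diagram iff $f^*\alpha$ is a weak equivalence in $\simplicial{\mathcal{E}'}$.

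Finally, \prettyref{prop:reflect} applied to $\alpha$ shows that $f^*\alpha$ is a weak equivalence exactly when $\alpha$ is, completing the equivalence. The main obstacle, if any, is purely bookkeeping: one has to check that the natural isomorphism of left derived functors furnished by \prettyref{prop:commute} really does identify the comparison map of $f^*\mathcal{X}$ with $f^*$ applied to the comparison map of $\mathcal{X}$, rather than an a priori different map. This is a naturality statement for the derived unit/counit of the Quillen pair $(f^*,f_*)$ and involves no new ingredients beyond those already collected in the preceding propositions.
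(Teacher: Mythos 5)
Your proposal is correct and follows essentially the same route as the paper: identify the comparison map for $f^*\mathcal{X}$ with $f^*$ of the comparison map for $\mathcal{X}$ (using that $f^*$ is left Quillen and commutes with $\hocolim$ and with $\colim$), then apply reflection of weak equivalences along the surjective geometric morphism. The paper carries out the ``bookkeeping'' you flag by exhibiting an explicit commutative square relating $f^*\hocolim\mathcal{X}\rightarrow f^*\colim\mathcal{X}$ to $\hocolim f^*\mathcal{X}\rightarrow\colim f^*\mathcal{X}$, which is exactly the naturality check you describe.
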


\begin{proof}
Recall that $\mathcal{X}$ is a homotopy colimit diagram if the natural
map 
\begin{displaymath}
\Psi:\hocolim_{\mathcal{I}}\mathcal{X}\rightarrow
\colim_{\mathcal{I}}\mathcal{X}
\end{displaymath}
is a weak equivalence. 

We have a diagram
\begin{center}
  \begin{minipage}[c]{10cm}
    \xymatrix{
      f^\ast \hocolim \mathcal{X} \ar[r] & f^\ast\colim
      \mathcal{X}  \\
      \hocolim f^\ast\mathcal{X} \ar@{.>}[u]\ar[r] & \colim f^\ast \mathcal{X}
      \ar[u] .
    }
  \end{minipage}
\end{center}
The left arrow exists because to compute $\hocolim\mathcal{X}$, we use
a cofibrant replacement which is preserved by the left Quillen functor
$f^\ast$. Therefore there is a cone from the cofibrant diagram
$\mathcal{X}$ to $f^\ast\hocolim\mathcal{X}$ which has to factor
through the colimit, which is also the homotopy colimit since the
diagram is cofibrant. The vertical morphisms are weak equivalences by
Proposition \ref{prop:commute}, hence $f^\ast\Psi$ can be identified up to
weak equivalence with the map 
\begin{displaymath}
\hocolim_{\mathcal{I}}f^\ast\mathcal{X}\rightarrow
\colim_{\mathcal{I}}f^\ast\mathcal{X},
\end{displaymath}
which is a weak equivalence if $f^\ast\mathcal{X}$ is a homotopy
colimit diagram. 

If $f$ is surjective, it reflects weak
equivalences, cf. Proposition \ref{prop:reflect}. This proves the claim.
\end{proof}

This implies that homotopy colimits in a model category of simplicial
sheaves can be checked on points, provided there are enough points,
cf. \cite[Proposition 3.1.10]{thesis}. 

\begin{corollary}
Let $\mathcal{E}$ be a topos with enough points, let $\mathcal{I}$ be
a small category, and let $\mathcal{X}:\mathcal{I}\rightarrow
\simplicial{\mathcal{E}}$ be a diagram. Then $\mathcal{X}$ is a
homotopy colimit diagram if and only if for each point $p$ of
$\mathcal{E}$ in a conservative set of points, the corresponding
diagram $p^\ast(\mathcal{X}):\mathcal{I}\rightarrow \simplicial{\set}$
is a homotopy colimit diagram. 
\end{corollary}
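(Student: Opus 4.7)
The plan is to reduce the statement to the reflection result for surjective geometric morphisms already proved in \prettyref{prop:colim2}. By the definition of ``enough points,'' a conservative set $\{p_i: \set \to \mathcal{E}\}_{i \in I}$ assembles into a single geometric morphism $p: \set^I \to \mathcal{E}$ with inverse image $p^\ast X = (p_i^\ast X)_{i \in I}$. Conservativity of the family is precisely the condition that $p$ is a surjective geometric morphism in the sense used in \prettyref{prop:reflect} and \prettyref{prop:colim2}.

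For the ``only if'' direction, each $p_i^\ast$ is a left Quillen functor by \prettyref{prop:pair}, hence preserves homotopy colimits by \prettyref{prop:commute} (this is \prettyref{prop:colim1}). So if $\mathcal{X}$ is a homotopy colimit diagram, each $p_i^\ast \mathcal{X}$ is as well.

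For the ``if'' direction, I would apply \prettyref{prop:colim2} to the surjective geometric morphism $p$, which says that $\mathcal{X}$ is a homotopy colimit diagram if and only if $p^\ast \mathcal{X}$ is. The category $\simplicial{\set^I}$ carries the product model structure, in which cofibrations, fibrations, and weak equivalences are all detected componentwise, so both ordinary and derived colimits are computed componentwise. Consequently $p^\ast \mathcal{X} = (p_i^\ast \mathcal{X})_{i \in I}$ is a homotopy colimit diagram in $\simplicial{\set^I}$ if and only if each $p_i^\ast \mathcal{X}$ is a homotopy colimit diagram in $\simplicial{\set}$, which is what is claimed.

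The only real obstacle is bookkeeping: identifying the assembled morphism $p: \set^I \to \mathcal{E}$ as a genuine geometric morphism, translating the classical notion of ``enough points'' into surjectivity of $p$ as used in \prettyref{prop:reflect}, and verifying that homotopy colimits in $\simplicial{\set^I}$ are computed componentwise. None of these points are deep, but they are precisely the places where the hypothesis of enough points is consumed. Alternatively, one can avoid the product topos entirely by arguing pointwise: the natural map $\hocolim \mathcal{X} \to \colim \mathcal{X}$ is a weak equivalence iff its image under $p^\ast$ is, by \prettyref{prop:reflect}, and under $p_i^\ast$ this image is identified up to weak equivalence with $\hocolim p_i^\ast \mathcal{X} \to \colim p_i^\ast \mathcal{X}$ (using that $p_i^\ast$ is left Quillen and preserves ordinary colimits as a left adjoint).
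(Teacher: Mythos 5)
Your proposal is correct and follows exactly the paper's route: assemble the conservative set of points into a single surjective geometric morphism $\prod_{p\in C}\set\rightarrow\mathcal{E}$ and invoke the reflection of homotopy colimits along surjective geometric morphisms (Proposition \ref{prop:colim2}). The only difference is that you spell out the componentwise identification of homotopy colimits in the product topos, which the paper leaves implicit.
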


\begin{proof}
This follows from Proposition \ref{prop:colim2}: if $\mathcal{E}$ has
enough points, we can choose a conservative set $C$ of points, and
then the geometric morphism
\begin{displaymath}
\prod_{p\in C}\set\rightarrow \mathcal{E}
\end{displaymath}
is surjective.
\end{proof}

\subsection{Homotopy Pullbacks}

Finally, we recall the behaviour of homotopy pullbacks under inverse
images of geometric morphisms. As for homotopy colimits, they are
preserved by inverse images and reflected, provided the geometric
morphism is surjective. The argument does however not work for
arbitrary homotopy limits, since the inverse image fails to be a
right Quillen functor.

\begin{proposition}
\label{prop:pullbackloc}
Let $\mathcal{E}$ be a topos, let $f:\mathcal{E}'\rightarrow
\mathcal{E}$ be a geometric morphism, and let the following
commutative diagram $\mathcal{X}$ in $\simplicial{\mathcal{E}}$ be
given: 
\begin{center}
  \begin{minipage}[c]{10cm}
    \xymatrix{
      A \ar[r] \ar[d] & B \ar[d] \\
      C \ar[r] & D.
    }
  \end{minipage}
\end{center}
If $\mathcal{X}$ is a homotopy pullback diagram in
$\simplicial{\mathcal{E}}$, then $f^\ast\mathcal{X}$ is a homotopy
pullback diagram in $\simplicial{\mathcal{E}'}$.  
If moreover $f$ is  surjective, and $f^\ast\mathcal{X}$ is a
homotopy pullback diagram in $\simplicial{\mathcal{E}'}$, then
$\mathcal{X}$ is a homotopy pullback diagram in $\simplicial{\mathcal{E}}$.
\end{proposition}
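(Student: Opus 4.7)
The proof splits into preservation and reflection, mirroring the structure of \prettyref{prop:colim2}. The argument is more delicate because $f^\ast$ is only a \emph{left} Quillen functor while homotopy pullback is a right-derived notion, so \prettyref{prop:commute} does not apply directly. Two key facts save the situation: $f^\ast$ preserves finite limits (as the inverse image of a geometric morphism), and $f^\ast$ preserves all weak equivalences (every object is cofibrant in the injective model structure of \prettyref{thm:jardine}, so the left Quillen functor $f^\ast$ carries all weak equivalences to weak equivalences).

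For preservation, I would compute $C \times_D^h B$ in $\simplicial{\mathcal{E}}$ by factoring $B \to D$ as a trivial cofibration followed by a fibration $B \xrightarrow{\sim} \widetilde B \twoheadrightarrow D$, so that $A \to C \times_D \widetilde B$ is a weak equivalence realizing the homotopy pullback. Applying $f^\ast$ and using exactness together with preservation of weak equivalences, one obtains a weak equivalence $f^\ast A \xrightarrow{\sim} f^\ast C \times_{f^\ast D} f^\ast \widetilde B$. The delicate remaining step is to identify this ordinary pullback with the genuine homotopy pullback $f^\ast C \times^h_{f^\ast D} f^\ast B$ in $\simplicial{\mathcal{E}'}$. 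I would achieve this by factoring $f^\ast \widetilde B \to f^\ast D$ further as $f^\ast \widetilde B \xrightarrow{\sim} \overline B \twoheadrightarrow f^\ast D$, which makes $f^\ast C \times_{f^\ast D} \overline B$ a bona fide model of the homotopy pullback, and then showing that $f^\ast C \times_{f^\ast D} f^\ast \widetilde B \to f^\ast C \times_{f^\ast D} \overline B$ is a weak equivalence by combining right properness of the injective model structure with the path-space description of homotopy pullbacks, exploiting that $f^\ast$ preserves finite limits and hence commutes with $D^{\Delta^1}$ (which is built levelwise as a finite limit in $D$).

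For reflection, the setup becomes easy. Assume $f$ surjective and $f^\ast \mathcal{X}$ a homotopy pullback in $\simplicial{\mathcal{E}'}$. Form the universal comparison map $A \to C \times_D^h B$ in $\simplicial{\mathcal{E}}$ and apply $f^\ast$; using the preservation half to identify $f^\ast(C \times_D^h B)$ with $f^\ast C \times^h_{f^\ast D} f^\ast B$, the hypothesis yields a weak equivalence $f^\ast A \xrightarrow{\sim} f^\ast C \times^h_{f^\ast D} f^\ast B$ in $\simplicial{\mathcal{E}'}$. Since $f$ is surjective, \prettyref{prop:reflect} forces $A \to C \times_D^h B$ to have been a weak equivalence in $\simplicial{\mathcal{E}}$ to begin with.

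The principal technical obstacle is the final step of the preservation argument, namely verifying that the ordinary pullback $f^\ast C \times_{f^\ast D} f^\ast \widetilde B$, taken over a map no longer known to be a fibration, still represents the correct homotopy pullback in $\simplicial{\mathcal{E}'}$. Should the path-space manipulation prove cumbersome, a robust fallback is to pass to Boolean localizations of $\mathcal{E}$ and $\mathcal{E}'$ (on which weak equivalences are detected, cf. the proof of \prettyref{prop:reflect}), thereby reducing the assertion to a standard fact about simplicial sets.
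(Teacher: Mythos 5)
Your reflection argument is essentially the paper's own: the paper factors $B\rightarrow\tilde{B}\twoheadrightarrow D$, uses exactness of $f^\ast$ to identify $f^\ast(C\times_D\tilde{B})$ with $f^\ast C\times_{f^\ast D}f^\ast\tilde{B}$, deduces from the preservation half plus \prettyref{lem:hopullback} that $f^\ast A\rightarrow f^\ast C\times_{f^\ast D}f^\ast\tilde{B}$ is a weak equivalence, and concludes by \prettyref{prop:reflect}. Your phrasing via the universal comparison map is the same argument, and it is correct \emph{granted} the preservation half.

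The gap is in the preservation half, exactly at the step you flag. The paper does not prove this statement at all --- it cites Rezk, Theorem 1.5 --- and your proposed tools do not close it. Right properness only tells you that pullback along an honest (injective) fibration models the homotopy pullback; $f^\ast\tilde{B}\rightarrow f^\ast D$ is merely a \emph{local} (stalkwise) fibration, since $f^\ast$ is left but not right Quillen, and the assertion that pullback along a local fibration still computes the homotopy pullback (``local fibrations are sharp'') is precisely the nontrivial content of Rezk's theorem, not a formal consequence of properness. The path-space manoeuvre does not escape this: $f^\ast$ does commute with $(-)^{\Delta^1}$ because the cotensor by a finite simplicial set is a finite limit, but $(f^\ast D)^{\Delta^1}\rightarrow f^\ast D\times f^\ast D$ is again only a local fibration, so you are back where you started. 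Your ``fallback'' --- detecting weak equivalences and homotopy pullbacks after passage to a Boolean localization of $\mathcal{E}'$ and reducing to simplicial sets --- is in fact the correct proof (it is essentially Rezk's), and should be promoted to the main argument; alternatively, simply cite Rezk for preservation as the paper does and reserve your effort for the reflection half.
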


\begin{proof}
The first assertion, i.e. that homotopy pullback squares are preserved
by the inverse image part of a geometric morphism is proved in
\cite[Theorem 1.5]{rezk:1998:sharp}. 

  Recall 
  that $\diagram{X}$ is a homotopy pullback diagram if there exists a 
  factorization of $f:B\rightarrow D$ into a trivial cofibration
  $i:B\rightarrow \tilde{B}$ and a fibration $g:\tilde{B}\rightarrow
  D$, such that the induced morphism $A\rightarrow C\times_D
  \tilde{B}$ is a weak equivalence. Since $f$ is surjective, it
  suffices to show that the induced morphism
  $f^\ast(A)\rightarrow f^\ast(C\times_D \tilde{B})\cong
  f^\ast(C)\times_{f^\ast(D)} f^\ast(\tilde{B})$ is a weak
  equivalence. Note that geometric morphisms preserve finite limits by
  definition, 
  which explains the last isomorphism. 

Consider the diagram
\begin{center}
  \begin{minipage}[c]{10cm}
    \xymatrix{
      f^\ast(A) \ar[r] \ar[d] & f^\ast(B)\ar[d] \\
      f^\ast(C)\times_{f^\ast(D)}f^\ast(\widetilde{B}) \ar[r] \ar[d] &
      f^\ast(\widetilde{B}) \ar[d] \\
      f^\ast(C) \ar[r] & f^\ast(D).
    }
  \end{minipage}
\end{center}
Since homotopy pullbacks are preserved by geometric morphisms, the
lower square is a homotopy pullback. By assumption, the outer square
is also homotopy pullback square, therefore the upper square is a
homotopy pullback, cf. Lemma \ref{lem:hopullback}. Since $f$ preserves
weak equivalences, $f^\ast(B)\rightarrow f^\ast(\widetilde{B})$ is a
weak equivalence. Therefore, the morphism $f^\ast(A)\rightarrow
f^\ast(C)\times_{f^\ast(D)}f^\ast(\widetilde{B})$ is also a weak
equivalence. This proves the result.

\end{proof}

As for homotopy colimits, we find that homotopy pullbacks in a
category of simplicial sheaves can be checked on points, provided
there are enough points, cf. \cite[Proposition 3.1.11]{thesis}.

\begin{corollary}
Let $\mathcal{E}$ be a topos with enough points, and let the following
commutative diagram $\diagram{X}$ of simplicial sheaves in
$\simplicial{\mathcal{E}}$ be given: 
\begin{center}
  \begin{minipage}[c]{10cm}
    \xymatrix{
      A \ar[r] \ar[d] & B \ar[d]^f \\
      C \ar[r] & D.
    }
  \end{minipage}
\end{center}
This is a homotopy pullback diagram iff for each point $p$ of $T$ in a
conservative set of points, the diagram $p^\ast(\diagram{X})$ of
simplicial sets is a homotopy pullback diagram. 
\end{corollary}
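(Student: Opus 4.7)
The plan is to deduce this from \prettyref{prop:pullbackloc} by exactly the same device used in the preceding corollary for homotopy colimits: if $\mathcal{E}$ has enough points, choose a conservative set $C$ of points, i.e. a set of geometric morphisms $p\colon \set\to\mathcal{E}$ which jointly detect isomorphisms (equivalently, weak equivalences, by the Boolean-localization characterization used in \prettyref{prop:reflect}). Assembling these yields a single geometric morphism
\begin{displaymath}
f\colon\prod_{p\in C}\set\longrightarrow\mathcal{E},
\end{displaymath}
and conservativity of $C$ is precisely the statement that $f$ is surjective.

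Now apply \prettyref{prop:pullbackloc} to this $f$. The ``only if'' direction is immediate: if $\diagram{X}$ is a homotopy pullback in $\simplicial{\mathcal{E}}$, then $f^\ast\diagram{X}$ is a homotopy pullback in $\simplicial{\prod_{p\in C}\set}$, and since the inverse image $f^\ast$ is computed componentwise, this is the statement that $p^\ast\diagram{X}$ is a homotopy pullback of simplicial sets for every $p\in C$. For the ``if'' direction, suppose each $p^\ast\diagram{X}$ is a homotopy pullback. Since homotopy pullbacks in a product model category are detected componentwise, $f^\ast\diagram{X}$ is a homotopy pullback in $\simplicial{\prod_{p\in C}\set}$; then surjectivity of $f$ and the second half of \prettyref{prop:pullbackloc} force $\diagram{X}$ to be a homotopy pullback in $\simplicial{\mathcal{E}}$.

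The only point requiring a moment's thought is the compatibility of $f^\ast$ with pointwise computations in the product topos, which just uses that both fibrant replacement and finite limits in $\prod_{p\in C}\simplicial{\set}$ are computed coordinatewise, together with the fact that $f^\ast$ is the tuple $(p^\ast)_{p\in C}$; no further argument is needed.
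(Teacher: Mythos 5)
Your proposal is correct and follows exactly the route the paper intends: the paper omits the proof of this corollary, but its proof of the analogous corollary for homotopy colimits assembles the conservative set of points into a surjective geometric morphism $\prod_{p\in C}\mathcal{S}et\rightarrow\mathcal{E}$ and invokes the corresponding preservation/reflection result, which is precisely what you do here with \prettyref{prop:pullbackloc}. Your added remark that homotopy pullbacks and the functor $f^\ast=(p^\ast)_{p\in C}$ are computed coordinatewise in the product topos fills in the one detail the paper leaves tacit.
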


\subsection{Homotopy Distributivity}

The results on homotopy limits and colimits from the previous section
can be used to give a simple proof of the following result of Rezk on
homotopy distributivity in categories of simplicial sheaves,
cf. \cite[Theorem 1.4]{rezk:1998:sharp}.
These results generalize various results on commuting homotopy
pullbacks and homotopy colimits known to hold for simplicial sets,
such as Mather's cube theorem and Puppe's
theorem, cf. \prettyref{cor:mather} and
\prettyref{prop:diagbase}. Moreover, homotopy  
distributivity allows the construction of classifying spaces for fibre 
sequences, cf. \cite{thesis}.

We begin by explaining the precise definition of homotopy distributivity,
which is a homotopical generalization of the usual infinite 
distributivity law which holds for topoi. It is a statement about
commutation of arbitrary small homotopy colimits with finite homotopy
limits. Since any finite homotopy limit can be constructed via
homotopy pullbacks, it suffices to check that homotopy pullbacks
distribute over arbitrary homotopy colimits. Most of the work on
homotopy distributivity is due to Rezk
\cite{rezk:1998:sharp}. 

The situation is the following. Let $\category{C}$ be a simplicial
model category, let $\category{I}$ be a small category, and let
$f:\diagram{X}\rightarrow \diagram{Y}$ be a morphism of
$\category{I}$-diagrams in $\category{C}$. The diagrams we are most
interested in are the following: 

For any $i\in\category{I}$, we have a commutative square
\begin{center}
  \begin{minipage}[c]{10cm}
    \begin{equation}
      \label{eq:distrib1}
      \xymatrix{
        \diagram{X}(i) \ar[r] \ar[d]_{f(i)} & \colim_\category{I}
        \diagram{X} \ar[d] \\ 
        \diagram{Y}(i) \ar[r] & \colim_\category{I} \diagram{Y}.
      }
    \end{equation}
  \end{minipage}
\end{center}

Moreover, for any $\alpha:i\rightarrow j$ in $\category{I}$ we have a 
commutative square
\begin{center}
  \begin{minipage}[c]{10cm}
    \begin{equation}
      \label{eq:distrib2}
      \xymatrix{
        \diagram{X}(i) \ar[r]^{\diagram{X}(\alpha)} \ar[d]_{f(i)} &
        \diagram{X}(j)
        \ar[d]^{f(j)} \\ 
        \diagram{Y}(i) \ar[r]_{\diagram{Y}(\alpha)} &
        \diagram{Y}(j). 
      }
    \end{equation}
  \end{minipage}
\end{center}

Now we are ready to state the definition of homotopy distributivity,
following \cite{rezk:1998:sharp}. 

\begin{definition}[Homotopy Distributivity]
In the above situation, we  say that $\category{C}$ 
satisfies \emph{homotopy distributivity} if for any morphism
$f:\diagram{X}\rightarrow \diagram{Y}$ of 
$\category{I}$-diagrams in  $\category{C}$ for which $\diagram{Y}$ is
a homotopy 
colimit diagram, i.e.  $\hocolim_\category{I} \diagram{Y}\rightarrow
\colim_\category{I} \diagram{Y}$ is a weak equivalence, the following
two properties hold: 
\begin{enumerate}[(HD i)]
\item If each square of the form (\ref{eq:distrib1}) is a homotopy
  pullback, then $\diagram{X}$ is a homotopy colimit diagram.
\item If $\diagram{X}$ is a homotopy colimit diagram, and each
  diagram of the form (\ref{eq:distrib2}) is a homotopy  pullback, then
  each diagram of the form (\ref{eq:distrib1}) is also a homotopy pullback.  
\end{enumerate}
\end{definition}

\begin{example}
The category $\simplicial{\set}$ of simplicial sets satisfies homotopy
distributivity. This follows e.g. from the work of Puppe
\cite{puppe:1974:hofib} and Mather \cite{mather:1976:hopullback}. 
\end{example}

More generally, homotopy distributivity holds for all model categories
of simplicial sheaves on a Grothendieck site and can be proven quite easily
if the site has enough points. We give a short and simple proof
of homotopy distributivity, based on the reflection of homotopy
colimits and pullbacks proved earlier.
The general statement and proof using Boolean localizations can be
found in \cite{rezk:1998:sharp}. 

\begin{proposition}
\label{prop:hodistrib}
Let $\mathcal{E}$ be a Grothendieck topos with enough points. Then
homotopy distributivity holds for the injective model structure on
$\simplicial{\mathcal{E}}$.  
\end{proposition}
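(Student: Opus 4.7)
The plan is to reduce the statement to the known case of simplicial sets by working pointwise. Since $\mathcal{E}$ has enough points, we can choose a conservative set of points $C$ and consider the surjective geometric morphism $f:\prod_{p\in C}\set\rightarrow \mathcal{E}$ constructed in the corollary to \prettyref{prop:colim2}. Equivalently, we will verify both axioms by testing after each $p^\ast$ for $p\in C$; the point is that $p^\ast$ preserves homotopy colimits (\prettyref{prop:colim1}) and homotopy pullbacks (\prettyref{prop:pullbackloc}), while the two corollaries to these propositions show that both notions are jointly \emph{reflected} by a conservative family of points. Since \prettyref{prop:hodistrib} asserts homotopy distributivity in the injective model structure, and homotopy distributivity is known for $\simplicial{\set}$ by Puppe and Mather, the problem becomes a formal diagram chase.

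To verify axiom (HD i), suppose $f:\diagram{X}\rightarrow\diagram{Y}$ is a morphism of $\category{I}$-diagrams with $\diagram{Y}$ a homotopy colimit diagram and each square of the form (\ref{eq:distrib1}) a homotopy pullback. For each $p\in C$, $p^\ast\diagram{Y}$ is a homotopy colimit diagram by \prettyref{prop:colim1}, and each square $p^\ast$ of (\ref{eq:distrib1}) is a homotopy pullback by \prettyref{prop:pullbackloc}. Homotopy distributivity in $\simplicial{\set}$ then implies that $p^\ast\diagram{X}$ is a homotopy colimit diagram. Since this holds for every $p$ in the conservative set $C$, the corollary to \prettyref{prop:colim2} shows that $\diagram{X}$ itself is a homotopy colimit diagram.

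For axiom (HD ii), assume additionally that $\diagram{X}$ is a homotopy colimit diagram and each square (\ref{eq:distrib2}) is a homotopy pullback. Applying $p^\ast$ again and using the preservation results, both diagrams $p^\ast\diagram{X}$ and $p^\ast\diagram{Y}$ are homotopy colimit diagrams and every $p^\ast$ of (\ref{eq:distrib2}) is a homotopy pullback in $\simplicial{\set}$. Homotopy distributivity in $\simplicial{\set}$ then yields that each $p^\ast$ of (\ref{eq:distrib1}) is a homotopy pullback, and reflecting via the corollary to \prettyref{prop:pullbackloc} gives that (\ref{eq:distrib1}) is a homotopy pullback in $\simplicial{\mathcal{E}}$. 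The only mildly subtle point — and the main thing to check carefully — is that \emph{both} the pushout-type conclusion of (HD i) and the pullback-type conclusion of (HD ii) really are detected pointwise, which is exactly what the preservation/reflection package of Propositions \ref{prop:colim1}, \ref{prop:colim2} and \ref{prop:pullbackloc} together with their corollaries provides; beyond this, no genuinely topos-theoretic work is needed, and the argument avoids the Boolean localization machinery used in \cite{rezk:1998:sharp}.
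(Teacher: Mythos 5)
Your proof is correct and takes essentially the same route as the paper's: both reduce to homotopy distributivity for simplicial sets by passing through the surjective geometric morphism $\prod_{p\in C}\set\rightarrow\mathcal{E}$ and using that homotopy colimit diagrams and homotopy pullback squares are preserved and reflected there (Propositions \ref{prop:colim1}, \ref{prop:colim2} and \ref{prop:pullbackloc}). The paper's proof is simply a terser statement of the same pointwise diagram chase you have written out.
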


\begin{proof}
Since there are enough points, there exists a  surjective
geometric morphism 
\begin{displaymath}
f:\prod_{p\in C}\set\rightarrow\mathcal{E},
\end{displaymath}
where $C$ is a conservative set of points. 
By Propositions \ref{prop:colim2} and \ref{prop:pullbackloc} the
properties of homotopy colimit resp. homotopy pullback diagrams can be
checked locally. The assertion then follows from homotopy
distributivity for simplicial sets.
\end{proof}

We next discuss two important consequences of homotopy distributivity
for model categories of simplicial sheaves. One is a generalization
of Mather's cube theorem \cite{mather:1976:hopullback}. The other 
generalizes a theorem of Puppe \cite{puppe:1974:hofib} on commuting
homotopy fibres and homotopy pushouts to simplicial sheaves.

\begin{corollary}[Mather's Cube Theorem]
\label{cor:mather}

Let $\mathcal{E}$ be any Grothendieck topos. Consider the following
diagram of simplicial objects in $\mathcal{E}$:
\begin{center}
  \begin{minipage}[c]{10cm}
    \xymatrix{
      & X_1 \ar'[d][dd]  \ar[dl] \ar[rr] & & X_2 \ar[dd] \ar[dl] \\
      X_3 \ar[dd] \ar[rr] & & X_4 \ar'[d][dd] \\
      & Y_1 \ar[dl] \ar[rr] & & Y_2 \ar[dl] \\
      Y_3 \ar[rr] & & Y_4
    }
  \end{minipage}
\end{center}
Assume that the bottom face, i.e. the one consisting of the
spaces $Y_i$, is a homotopy pushout, and that all the vertical 
faces are homotopy pullbacks. Then the top face is a homotopy
pushout. 

Moreover, taking the homotopy fibre commutes with homotopy
pushouts: for a commutative diagram
\begin{center}
  \begin{minipage}[c]{10cm}
    \xymatrix{
      E_2 \ar[d]_{p_2} & E_0 \ar[l] \ar[d]^{p_0} \ar[r] & E_1
      \ar[d]^{p_1} \\  
      B_2 & B_0 \ar[l] \ar[r]& B_1 
    }
  \end{minipage}
\end{center}
in which the squares are homotopy pullbacks, we have weak equivalences 
\begin{displaymath}
\hofib p_i \xrightarrow{\phantom{\cong}\cong\phantom{\cong}} \hofib
(p:E_1\cup^h_{E_0} 
E_2 \rightarrow B_1 \cup^h_{B_0} B_2).
\end{displaymath}
\end{corollary}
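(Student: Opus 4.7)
Both parts follow from homotopy distributivity (\prettyref{prop:hodistrib}) applied to the pushout shape $\mathcal{I}=(\bullet\leftarrow\bullet\rightarrow\bullet)$, combined with pasting of homotopy pullbacks (\prettyref{lem:hopullback}). For the cube theorem I would read the diagram as a morphism $f:\mathcal{X}\to\mathcal{Y}$ of $\mathcal{I}$-diagrams with $\mathcal{X}=(X_2\leftarrow X_1\rightarrow X_3)$ and $\mathcal{Y}=(Y_2\leftarrow Y_1\rightarrow Y_3)$, together with compatible cocones onto $X_4$ and $Y_4$. The hypothesis on the bottom face is exactly that $\mathcal{Y}$ is a homotopy colimit diagram with apex $Y_4$. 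Of the four vertical faces of the cube, the two faces containing the span arrows $X_1\to X_2$ and $X_1\to X_3$ have the shape \eqref{eq:distrib2}, while the two faces containing the arrows $X_2\to X_4$ and $X_3\to X_4$ are two of the three \eqref{eq:distrib1} squares (namely those for $i=2,3$); the remaining \eqref{eq:distrib1} square, for $i=1$, is obtained by pasting one \eqref{eq:distrib2} face with the adjacent \eqref{eq:distrib1} face via \prettyref{lem:hopullback}. All three \eqref{eq:distrib1} squares are thus homotopy pullbacks, and HD~(i) yields that $\mathcal{X}$ is a homotopy colimit diagram with apex $X_4$, i.e., the top face is a homotopy pushout.

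For the second assertion I would regard the given diagram as a morphism $f:\mathcal{E}\to\mathcal{B}$ of $\mathcal{I}$-diagrams, set $E:=E_1\cup^h_{E_0}E_2$ and $B:=B_1\cup^h_{B_0}B_2$, and let $p:E\to B$ denote the induced map on apices. Then $\mathcal{E}$ and $\mathcal{B}$ are homotopy colimit diagrams by construction, and the two hypothesized homotopy pullback squares are precisely the \eqref{eq:distrib2} squares. HD~(ii) then produces homotopy pullback squares
\[
\xymatrix{
E_i \ar[r]\ar[d]_{p_i} & E \ar[d]^{p} \\
B_i \ar[r] & B
}
\]
for $i=0,1,2$. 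Choosing a basepoint of $B$ represented by a point of some $B_i$ and pasting the above horizontally with the defining homotopy pullback for $\hofib p_i$, \prettyref{lem:hopullback} produces an outer homotopy pullback square whose upper-left vertex is $\hofib p_i$ and whose right column is $p:E\to B$, yielding the desired weak equivalence $\hofib p_i\simeq\hofib p$.

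The one piece of bookkeeping is that the apices appearing in \eqref{eq:distrib1} are literally ordinary colimits, whereas $X_4$, $Y_4$, $E$, $B$ are in general only homotopy colimits. Passing to cofibrant models of the span diagrams aligns ordinary and homotopy colimits, and by \prettyref{prop:hoinvariance} this does not disturb any of the homotopy pullback hypotheses; no other difficulty arises, since the entire substantive content of the corollary is carried by homotopy distributivity.
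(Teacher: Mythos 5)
Your proposal is correct and follows essentially the same route as the paper: both parts are reduced to homotopy distributivity for span-shaped diagrams, with the missing \eqref{eq:distrib1} square for the initial vertex obtained by pasting via \prettyref{lem:hopullback}, HD~(i) giving the cube theorem, HD~(ii) giving the pullback squares over the pushout, and the cofibrant-replacement bookkeeping handled exactly as in the paper's ``without loss of generality'' step. The only cosmetic difference is that you spell out the identification of the vertical faces with the \eqref{eq:distrib1}/\eqref{eq:distrib2} squares and the final fibre comparison in slightly more detail than the paper does.
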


\begin{proof}
This is a consequence of homotopy distributivity,
cf. Proposition \ref{prop:hodistrib},
applied to homotopy pushout diagrams. The assumption in the definition
of homotopy distributivity is that the bottom face is a homotopy
colimit diagram, i.e. a homotopy pushout. 

For the first assertion, we note that since all the vertical faces are 
homotopy pullbacks, the diagonal square in the cube consisting of
$X_1$, $Y_1$, $X_4$ and $Y_4$ is also a homotopy pullback, by the
homotopy pullback lemma \ref{lem:hopullback}.
By (HD i) we conclude that the top square is a homotopy colimit
diagram, i.e. $X_4$ is weakly equivalent to the homotopy pushout
$X_2\cup^h_{X_1}X_3$.  
The restriction in the definition of
homotopy distributivity that $X_4$ be the point-set pushout of $X_2$
and $X_3$ along $X_1$ is not essential. Without loss of generality we
can assume that the morphisms $X_1\rightarrow X_3$
resp. $Y_1\rightarrow Y_3$ are cofibrations, and that $X_4$
resp. $Y_4$ are point-set pushouts. If this is not the case, just
replace the morphism by cofibrations, and obtain a cube which is
weakly equivalent to the cube we started with.

For the second statement note that since the squares are homotopy
pullbacks, we have $\hofib p_0\cong\hofib p_1\cong\hofib p_2$. Factoring
$E_0\rightarrow E_1$ resp. $B_0\rightarrow B_1$ as a cofibration
followed by a trivial fibration, we can assume that these morphisms
are cofibrations. 
Denote
$E=E_1\cup^h_{E_0} E_2$ and $B=B_1\cup^h_{B_0}B_2$. Then we are in the
situation to apply (HD ii). This implies that all the squares 
\begin{center}
  \begin{minipage}[c]{10cm}
    \xymatrix{
      E_i \ar[r] \ar[d] & E \ar[d] \\
      B_i \ar[r] &B
    }
  \end{minipage}
\end{center}
are homotopy pullback squares. In particular, we get the desired
weak equivalences $\hofib p_i\cong\hofib p$. 
\end{proof}

The following is a version of Puppe's Theorem \cite{puppe:1974:hofib}
for simplicial sheaves: 

\begin{proposition}[Puppe's Theorem]
\label{prop:diagbase}
Let $\mathcal{E}$ be a Grothendieck topos, and let
$\diagram{X}:\category{I}\rightarrow \simplicial{\mathcal{E}}$ be a 
diagram of simplicial objects over a fixed base simplicial object 
$Y$, i.e. the following diagram commutes for every
$\alpha:i\rightarrow j$ in $\category{I}$: 
\begin{center}
  \begin{minipage}[c]{10cm}
    \xymatrix{
      \diagram{X}(i) \ar[rr]^{\diagram{X}(\alpha)} \ar[rd] & &
      \diagram{X}(j) \ar[ld] \\ 
      & Y
    }
  \end{minipage}
\end{center}
There is an associated diagram of homotopy fibres 
\begin{displaymath}
\diagram{F}:\category{I}\rightarrow\simplicial{\mathcal{E}}:i\mapsto 
\hofib (\diagram{X}(i)\rightarrow Y)
\end{displaymath} 
Denoting $X=\hocolim_{\category{I}}\diagram{X}$ and
$F=\hocolim_{\category{I}}\diagram{F}$, we have a weak
equivalence $\hofib (X\rightarrow Y)\simeq F$.
\end{proposition}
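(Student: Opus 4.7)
The plan is to reduce the statement to an application of homotopy distributivity~(HD ii) to the map of diagrams $\diagram{F}\to\diagram{X}$. To make the setup strict, I would first replace $\diagram{X}$ by a cofibrant $\category{I}$-diagram and functorially factor each structure map $\diagram{X}(i)\to Y$ as a trivial cofibration followed by a fibration, so that $X=\hocolim\diagram{X}=\colim\diagram{X}$ and each $\diagram{F}(i)$ can be taken to be a strict fibre modelling the homotopy fibre. A further cofibrant replacement of $\diagram{F}$ then ensures $F=\hocolim\diagram{F}=\colim\diagram{F}$; since the model structure is proper, none of the homotopy pullback squares we will need are disturbed by these replacements.

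Next I would check that for each morphism $\alpha:i\to j$ in $\category{I}$, the square~(\ref{eq:distrib2}) attached to $\diagram{F}\to\diagram{X}$ is a homotopy pullback. This is immediate from the pasting lemma~\prettyref{lem:hopullback}: the horizontal composite of (\ref{eq:distrib2}) with the defining homotopy pullback square of $\diagram{F}(j)=\hofib(\diagram{X}(j)\to Y)$ equals the defining square of $\diagram{F}(i)$; since both the outer and the right inner squares are homotopy pullbacks, so is the left inner one.

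With these hypotheses in place, I would apply homotopy distributivity~(HD ii): both diagrams are homotopy colimit diagrams and all squares~(\ref{eq:distrib2}) are homotopy pullbacks, so each square of the form~(\ref{eq:distrib1}),
\[
\begin{array}{ccc}
\diagram{F}(i) & \longrightarrow & F \\
\downarrow & & \downarrow \\
\diagram{X}(i) & \longrightarrow & X,
\end{array}
\]
is a homotopy pullback.

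The main obstacle is then the passage from this fact to the desired equivalence $F\simeq\hofib(X\to Y)$. The universal property produces a canonical comparison map $\phi:F\to\hofib(X\to Y)$ over $X$. Combining the~(HD ii) square with the identification $\diagram{F}(i)\simeq\diagram{X}(i)\times_X^h\hofib(X\to Y)$ obtained by pasting with the defining square of $\hofib(X\to Y)$, one sees that the homotopy pullback of $\phi$ along each $\diagram{X}(i)\to X$ is a weak equivalence. Since $\{\diagram{X}(i)\to X\}_{i\in\category{I}}$ is a homotopy colimit cover of $X$, descent in $\simplicial{\mathcal{E}}$---itself a manifestation of homotopy distributivity, and rephrasable as a second application of~(HD ii) to the diagram $\diagram{X}\times_X^h\hofib(X\to Y)\to\diagram{X}$---forces $\phi$ itself to be a weak equivalence, which yields the desired weak equivalence $\hofib(X\to Y)\simeq F$.
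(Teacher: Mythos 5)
Your argument does arrive at the correct statement, and its essential ingredients --- the pasting lemma \prettyref{lem:hopullback} and homotopy distributivity applied to the auxiliary diagram $i\mapsto \diagram{X}(i)\times^h_X\hofib(X\rightarrow Y)$ --- are exactly those of the paper. But the route is longer than it needs to be, and the one step that carries all the weight is attributed to the wrong half of homotopy distributivity. The paper dispenses with your (HD~ii) stage entirely: it defines $\diagram{G}(i)=\diagram{X}(i)\times^h_X\hofib(X\rightarrow Y)$ with $\hofib(X\rightarrow Y)\rightarrow X$ rectified to a fibration, so that $\colim\diagram{G}\cong\hofib(X\rightarrow Y)$ by universality of colimits in a topos; identifies $\diagram{G}(i)\simeq\diagram{F}(i)$ objectwise by the same pasting computation you carry out; and then applies \emph{(HD~i)} to $\diagram{G}\rightarrow\diagram{X}$: the squares of type (\ref{eq:distrib1}) for this map are homotopy pullbacks by construction, so $\diagram{G}$ is a homotopy colimit diagram and $\hofib(X\rightarrow Y)\cong\colim\diagram{G}\simeq\hocolim\diagram{G}\simeq\hocolim\diagram{F}=F$.

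Your final ``descent'' step is precisely this (HD~i) application in disguise, and it cannot be run via (HD~ii) as you claim: (HD~ii) takes as \emph{hypothesis} that $\diagram{X}\times^h_X\hofib(X\rightarrow Y)$ is a homotopy colimit diagram --- which is exactly what must be proved in order to identify its colimit $\hofib(X\rightarrow Y)$ with its homotopy colimit $\simeq F$ --- and its conclusion (the squares (\ref{eq:distrib1}) are homotopy pullbacks) already holds by construction, so the citation is circular as written. Once you replace (HD~ii) by (HD~i) in that last paragraph, the proof closes; but then your earlier application of (HD~ii) to $\diagram{F}\rightarrow\diagram{X}$, together with the verification that the squares (\ref{eq:distrib2}) are homotopy pullbacks, becomes redundant and can be deleted, since the objectwise equivalence $\diagram{F}\simeq\diagram{G}$ and homotopy invariance of $\hocolim$ (Proposition \ref{prop:hoinvariance}) already supply everything needed.
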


\begin{proof}
We construct a new morphism of diagrams $\diagram{G}\rightarrow
\diagram{X}$, where the diagram $\diagram{G}$ is defined by
\begin{displaymath}
\diagram{G}:\category{I}\rightarrow \simplicial{\mathcal{E}}:i\mapsto 
\diagram{X}(i)\times^h_X \hofib(X\rightarrow Y).
\end{displaymath}
Without loss of generality we can assume $\hofib(X\rightarrow
Y)\rightarrow X$ is a fibration. Then the homotopy pullbacks above are
ordinary pullbacks, and $\colim\diagram{G}\cong \hofib(X\rightarrow Y)$. 
We apply the homotopy pullback lemma to the following diagram:
\begin{center}
  \begin{minipage}[c]{10cm}
    \xymatrix{
      \diagram{X}(i)\times^h_X \hofib(X\rightarrow Y) \ar[r] \ar[d] & 
      \diagram{X}(i) \ar[d] \\  
      \hofib(X\rightarrow Y) \ar[d] \ar[r] & X \ar[d] \\
      \pt \ar[r] & Y
    }
  \end{minipage}
\end{center}
This implies the following weak equivalence
\begin{displaymath}
\diagram{X}(i)\times^h_X \hofib(X\rightarrow Y)\simeq
\hofib(\diagram{X}(i)\rightarrow Y)=\diagram{F}(i).
\end{displaymath}
Invariance of homotopy colimits under weak equivalence,
cf. Proposition \ref{prop:hoinvariance}, implies a weak equivalence
$\hocolim\diagram{G}\cong\hocolim\diagram{F}$. 
Homotopy distributivity applied to the projection morphism
$\diagram{G}\rightarrow \diagram{X}$ implies that $\diagram{G}$ is a
homotopy colimit diagram. Putting everything together we obtain weak
equivalences $\hofib(X\rightarrow
Y)\cong\colim\diagram{G}\simeq\hocolim \diagram{G}\simeq \hocolim
\diagram{F}$, whence the desired statement follows.
\end{proof}

\subsection{Ganea's Theorem}

It is now possible to obtain some fibre sequences for simplicial
sheaves, which are known to hold for simplicial sets by homotopy
distributivity.
In the case of simplicial sets, these fibre sequences are all more or
less consequences of Ganea's work \cite{ganea:1965:suspension}. Their
simplicial sheaf analogues have been used in \cite{toric} to provide
partial  descriptions of the $\Ao$-fundamental group of smooth toric
varieties.   

 We start out with a theorem describing the homotopy fibre of the fold
 map. The proof is essentially the one given in \cite[Appendix
  HL]{farjoun:1996:cellular}, which simply applies homotopy
distributivity to one of the simplest situations possible: 

\begin{proposition}[Ganea's Theorem]
\label{prop:ganea1}
Let $\mathcal{E}$ be a Grothendieck topos, and let $X$ be a simplicial
object in $\mathcal{E}$. 
The sequence $\Sigma\Omega X\rightarrow X\vee X \rightarrow X$ is a
fibre sequence in $\simplicial{\mathcal{E}}$. 
\end{proposition}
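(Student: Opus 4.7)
The plan is to realize $X \vee X$ as a homotopy pushout over a point and apply Puppe's theorem (\prettyref{prop:diagbase}) to compute the homotopy fibre of the fold map in terms of homotopy fibres of the three pieces of that pushout. Since the suspension is by definition the homotopy pushout $\pt \cup^h \pt$ over $\Omega X$, this should produce $\Sigma\Omega X$ essentially for free.

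More concretely, let $\category{I}$ be the span category $1\leftarrow 0\rightarrow 2$, and consider the diagram $\diagram{X}:\category{I}\rightarrow\simplicial{\mathcal{E}}$ given by
\begin{displaymath}
\diagram{X}(1)=X,\quad \diagram{X}(0)=\pt,\quad \diagram{X}(2)=X,
\end{displaymath}
with structure maps the basepoint inclusions $\pt\rightarrow X$. After replacing these inclusions by cofibrations if necessary (which is harmless since $\hocolim$ is homotopy invariant, \prettyref{prop:hoinvariance}), the homotopy colimit of $\diagram{X}$ is precisely the wedge $X\vee X$. This diagram also lives over $X$: map both copies of $X$ to $X$ by the identity, and $\pt$ to $X$ by the chosen basepoint. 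By construction, the induced map $\hocolim_{\category{I}}\diagram{X}\rightarrow X$ is the fold map $\nabla:X\vee X\rightarrow X$.

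Now I apply \prettyref{prop:diagbase} to this diagram over $Y=X$. The associated diagram of homotopy fibres is
\begin{displaymath}
\diagram{F}(1)=\hofib(\id_X)\simeq\pt,\quad \diagram{F}(0)=\hofib(\pt\rightarrow X)\simeq\Omega X,\quad \diagram{F}(2)=\hofib(\id_X)\simeq\pt,
\end{displaymath}
with structure maps determined up to homotopy. Puppe's theorem then yields a weak equivalence
\begin{displaymath}
\hofib(X\vee X\rightarrow X)\simeq \hocolim_{\category{I}}\diagram{F}\simeq \pt \cup^h_{\Omega X} \pt,
\end{displaymath}
and the right hand side is, by definition of the (unreduced) suspension in $\simplicial{\mathcal{E}}$, weakly equivalent to $\Sigma\Omega X$. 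This gives the asserted fibre sequence $\Sigma\Omega X\rightarrow X\vee X\rightarrow X$.

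The only real point to be careful about is the identification of $\hocolim_{\category{I}}\diagram{X}$ with the wedge $X\vee X$, which requires the span to be a cofibrant diagram (so that homotopy and strict pushouts agree); this is standard and achieved by cofibrantly replacing the maps $\pt\rightarrow X$. Everything else is a direct application of the machinery already set up: homotopy distributivity via \prettyref{prop:hodistrib}, and Puppe's theorem via \prettyref{prop:diagbase}. No pointwise-on-stalks argument or further appeal to Boolean localizations is needed here.
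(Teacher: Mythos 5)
Your argument is correct and is essentially the paper's own proof: both apply Puppe's theorem (\prettyref{prop:diagbase}) to the span $X \leftarrow \pt \rightarrow X$ over the base $X$, identify the homotopy colimit of the top row with $X\vee X$ and the induced map with the fold map, and recognize the homotopy pushout of the fibre diagram $\pt \leftarrow \Omega X \rightarrow \pt$ as $\Sigma\Omega X$. The extra care you take about cofibrantly replacing the basepoint inclusions is a reasonable (and harmless) addition.
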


\begin{proof}
This is an instance of Proposition \ref{prop:diagbase} applied to the
diagram: 
\begin{center}
  \begin{minipage}[c]{10cm}
    \xymatrix{
      X \ar[d]_= & \pt \ar[l] \ar[d] \ar[r] & X\ar[d]^= \\
      X & X\ar[l]^= \ar[r]_= &X
    }
  \end{minipage}
\end{center}
We are taking the homotopy colimit of the diagram over the fixed base
space $X$, and the homotopy colimit of the upper line yields $X\vee
X$. The map to $X$ is the fold map $\vee:X\vee X\rightarrow X$. Then
Proposition \ref{prop:diagbase} shows that the fibre is given by the
homotopy colimit of the diagram of fibres:
\begin{displaymath}
\pt \longleftarrow \Omega X \longrightarrow \pt.
\end{displaymath}
This is by definition $\Sigma\Omega X$.
\end{proof}

\begin{example}
\label{ex:projline}
A particular topological instance of the above is the fibre sequence 
\begin{displaymath}
S^2\rightarrow \mathbb{C}P^\infty\vee \mathbb{C}P^\infty \rightarrow
\mathbb{C}P^\infty. 
\end{displaymath}
A similar fibre sequence exists in $\simplicial{\topos{\sms}}$ with
any of the usual topologies.
This implies that there is a fibre sequence
\begin{displaymath}
\Sigma^1_s\mathbb{G}_m\rightarrow B\mathbb{G}_m\vee B\mathbb{G}_m
\rightarrow B\mathbb{G}_m.
\end{displaymath}
$\Ao$-locally, this yields a fibre sequence
\begin{displaymath}
\mathbb{P}^1\rightarrow \mathbb{P}^\infty\vee
\mathbb{P}^\infty\rightarrow \mathbb{P}^\infty.
\end{displaymath}
\end{example}

There are also other fibre sequences one can obtain: 
By considering similar diagrams as in \cite[Appendix
  HL]{farjoun:1996:cellular} we get the following fibration
sequences in any model category of simplicial sheaves. In the next
proposition, $X \ast Y$ denotes the join of $X$ and $Y$ which is
defined as the homotopy pushout of the diagram $X\leftarrow X\times
Y\rightarrow Y$.

\begin{proposition}
\label{prop:ganea2}
Let $\mathcal{E}$ be a Grothendieck topos, and $X$ be a simplicial
object in $\mathcal{E}$. 
The sequence $\Omega X_0\ast\Omega X_1\rightarrow X_0\vee X_1
\rightarrow X_0\times X_1$ is a 
fibre sequence in $\simplicial{\mathcal{E}}$. 
\end{proposition}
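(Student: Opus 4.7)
The plan is to mimic the argument of Proposition \ref{prop:ganea1} by exhibiting $X_0\vee X_1$ as a homotopy colimit over the base $X_0\times X_1$ and then invoking Puppe's theorem (Proposition \ref{prop:diagbase}) to compute the homotopy fibre as a homotopy colimit of the fibre diagram.

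First I would write $X_0\vee X_1$ as the homotopy pushout $\hocolim$ of the span
\begin{displaymath}
X_0 \longleftarrow \pt \longrightarrow X_1
\end{displaymath}
and view this whole span as a diagram in $\simplicial{\mathcal{E}}$ over the fixed base $Y=X_0\times X_1$. The map from the apex is the basepoint $\pt\to X_0\times X_1$; the two legs are the canonical closed immersions $i_0\colon X_0\hookrightarrow X_0\times X_1$, $x_0\mapsto (x_0,\pt)$, and $i_1\colon X_1\hookrightarrow X_0\times X_1$, $x_1\mapsto (\pt,x_1)$. Commutativity over $X_0\times X_1$ is immediate, and the induced map on the homotopy pushout is precisely the canonical map $X_0\vee X_1\to X_0\times X_1$ whose fibre we want to identify.

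Next I would compute the diagram of homotopy fibres. From the standard fibre sequence $X_0\to X_0\times X_1\xrightarrow{\mathrm{pr}_2} X_1$ one reads off $\hofib(i_0)\simeq \Omega X_1$; symmetrically $\hofib(i_1)\simeq\Omega X_0$. The homotopy fibre of $\pt\to X_0\times X_1$ is $\Omega(X_0\times X_1)\simeq \Omega X_0\times\Omega X_1$. A routine diagram chase (or Lemma \ref{lem:hopullback} applied to the towers of homotopy pullback squares) identifies the maps induced by $\pt\to X_i$ with the projections, so the diagram of fibres is
\begin{displaymath}
\Omega X_1 \stackrel{\mathrm{pr}_2}{\longleftarrow} \Omega X_0 \times \Omega X_1 \stackrel{\mathrm{pr}_1}{\longrightarrow} \Omega X_0,
\end{displaymath}
whose homotopy pushout is by definition the join $\Omega X_0 \ast \Omega X_1$.

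Applying Puppe's theorem (Proposition \ref{prop:diagbase}), which is available since $\mathcal{E}$ is a Grothendieck topos, we conclude
\begin{displaymath}
\hofib\bigl(X_0\vee X_1\to X_0\times X_1\bigr)\simeq \hocolim\diagram{F} \simeq \Omega X_0 \ast \Omega X_1,
\end{displaymath}
which is the desired fibre sequence. The only slightly delicate point is the identification of the connecting maps in the fibre diagram with the two projections out of $\Omega X_0\times\Omega X_1$; I expect this to be the main bookkeeping step, but it is a direct consequence of the pasting lemma for homotopy pullbacks together with naturality of the fibre of a projection. Everything else is a direct specialization of the Puppe/homotopy distributivity machinery already set up in this section.
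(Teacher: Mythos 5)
Your proof is correct and is essentially the paper's own argument: the paper likewise applies Puppe's theorem (Proposition \ref{prop:diagbase}) to the span $X_1\leftarrow \pt\rightarrow X_0$ viewed over the base $X_0\times X_1$, identifies the column-wise homotopy fibres as $\Omega X_0 \leftarrow \Omega X_0\times\Omega X_1\rightarrow \Omega X_1$ (the two projections), and reads off the join as the homotopy pushout of the fibre diagram. The only cosmetic difference is that the paper displays the three fibre sequences explicitly as the columns of a $3\times 3$ diagram rather than invoking Lemma \ref{lem:hopullback} to identify the connecting maps.
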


\begin{proof}
Apply Puppe's theorem \ref{prop:diagbase} to the following diagram,
the horizontal lines are the pushout diagrams and the vertical lines
are fibre sequences:
\begin{center}
  \begin{minipage}[c]{10cm}
    \xymatrix{
      \Omega X_0\times \ast \ar[d] & \Omega X_0\times \Omega X_1
      \ar[d] \ar[r] \ar[l] & \ast\times \Omega X_1 \ar[d] \\
      \ast\times X_1 \ar[d] & \ast \ar[l] \ar[d] \ar[r] & X_0\times\ast
      \ar[d] \\
      X_0\times X_1 & X_0\times X_1 \ar[l] \ar[r] & X_0\times X_1.
    }
  \end{minipage}
\end{center}
\end{proof}

\begin{example}
\label{ex:projline2}
An instantiation of the above fibre sequence similar to the one given in
Example \ref{ex:projline} is the following fibre sequence in
$\simplicial{\topos{\sms}}$: 
\begin{displaymath}
\mathbb{G}_m\ast\mathbb{G}_m
\rightarrow B\mathbb{G}_m\vee B\mathbb{G}_m\rightarrow
B\mathbb{G}_m \times B\mathbb{G}_m.
\end{displaymath}
$\Ao$-locally, this yields a fibre sequence
\begin{displaymath}
\mathbb{A}^2\setminus\{0\}\rightarrow \mathbb{P}^\infty\vee
\mathbb{P}^\infty \rightarrow \mathbb{P}^\infty\times \mathbb{P}^\infty.
\end{displaymath}
\end{example}

As a final example, we restate yet another theorem of Ganea 
\cite{ganea:1965:suspension}. It should  by now be obvious, which
diagram to apply Puppe's theorem to. 

\begin{proposition}
\label{prop:ganea3}
Let $\mathcal{E}$ be a Grothendieck topos, and let $F\rightarrow
E\rightarrow B$  be any fibre sequence of simplicial objects. Then
there is another fibre sequence 
\begin{displaymath}
F\ast\Omega B \longrightarrow E\cup CF=E/F \longrightarrow B.
\end{displaymath}
\end{proposition}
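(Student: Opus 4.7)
The plan is to apply Puppe's theorem (\prettyref{prop:diagbase}) to the obvious span over $B$ whose homotopy pushout is $E/F$. Specifically, consider the diagram $\diagram{X}:\category{I}\rightarrow\simplicial{\mathcal{E}}$ indexed by the pushout shape $\bullet\leftarrow\bullet\rightarrow\bullet$ given by
\begin{displaymath}
E \longleftarrow F \longrightarrow \pt,
\end{displaymath}
equipped with structure maps to the base $Y=B$: on $E$ via the given fibration $p:E\rightarrow B$, on the terminal object via the basepoint $\pt\rightarrow B$, and on $F$ via the composition $F\rightarrow E\rightarrow B$, which agrees up to canonical homotopy with $F\rightarrow\pt\rightarrow B$ since $F$ is the homotopy fibre of $p$. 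Thus $\diagram{X}$ is genuinely a diagram over $B$ in the sense required by \prettyref{prop:diagbase}.

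The first step is to identify the homotopy colimit of $\diagram{X}$. Replacing $F\rightarrow E$ by a cofibration if necessary, the homotopy pushout $\hocolim_{\category{I}}\diagram{X}$ is computed as $E\cup CF\simeq E/F$, and the induced map to $B$ is the natural quotient map $E/F\rightarrow B$.

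The second step is to compute the associated diagram of homotopy fibres $\diagram{F}$. By definition, $\hofib(E\rightarrow B)=F$. The fibre $\hofib(\pt\rightarrow B)=\Omega B$. For the apex $F$, the structure map $F\rightarrow B$ is canonically nullhomotopic, so $\hofib(F\rightarrow B)\simeq F\times\Omega B$. A direct inspection (or an application of the pullback pasting \prettyref{lem:hopullback}) shows that the map $F\times\Omega B\rightarrow F$ induced by $F\rightarrow E$ corresponds, under this identification, to the projection $\mathrm{pr}_F$, and the map $F\times\Omega B\rightarrow\Omega B$ induced by $F\rightarrow\pt$ is $\mathrm{pr}_{\Omega B}$. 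Hence the fibre diagram is precisely
\begin{displaymath}
F \xleftarrow{\;\mathrm{pr}_F\;} F\times\Omega B \xrightarrow{\;\mathrm{pr}_{\Omega B}\;} \Omega B,
\end{displaymath}
whose homotopy pushout is by definition the join $F\ast\Omega B$.

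Finally, Puppe's theorem yields a weak equivalence
\begin{displaymath}
\hofib(E/F\rightarrow B)\simeq \hocolim_{\category{I}}\diagram{F}\simeq F\ast\Omega B,
\end{displaymath}
which is the desired fibre sequence. The only point requiring genuine care, and thus the main obstacle, is the identification of the two connecting morphisms in the fibre diagram as the honest projections from $F\times\Omega B$; once this is verified, the rest is a formal consequence of Puppe's theorem and the definition of the join.
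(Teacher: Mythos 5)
Your proof is correct and is exactly the argument the paper intends: the paper omits the proof entirely, remarking only that it should by now be obvious which diagram to apply Puppe's theorem to, and your span $E\leftarrow F\rightarrow \pt$ over $B$, with mapping cone $E\cup CF$ and fibre diagram $F\leftarrow F\times\Omega B\rightarrow \Omega B$, is that diagram. One small caveat on the step you yourself single out: under the standard identification $\hofib(p)\simeq F$ the induced map $F\times\Omega B\rightarrow F$ is the holonomy action $(f,\gamma)\mapsto f\cdot\gamma$ rather than the literal projection, but the shearing self-equivalence $(f,\gamma)\mapsto (f\cdot\gamma,\gamma)$ of $F\times\Omega B$ identifies your fibre diagram with the projection diagram, so its homotopy pushout is still the join $F\ast\Omega B$ and the conclusion stands.
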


\subsection{Canonical Homotopy Colimit Decomposition} 

Let $p:E\rightarrow B$ be a fibration of fibrant simplicial sets. Then
the canonical homotopy colimit decomposition of $B$ allows to write
$B$ as homotopy colimit of standard simplices $\Delta^n\rightarrow
B$. Then we can pull back the fibration $p$ to these simplices and
obtain the homotopy fibres. By homotopy distributivity, $E$ can be
written as the homotopy colimit over the simplex category
$\Delta\downarrow B$ of the homotopy fibres. 
The same statement works for simplicial sheaves:
The right notion to formulate it is the canonical
homotopy colimit decomposition for objects in a combinatorial model
category, which was described in detail in
\cite{dugger:2001:combinatorial}. 

Let $\category{M}$ be a combinatorial model category, $\category{C}$
be a small category. For any functor
$I:\category{C}\rightarrow\category{M}$ and a fixed cosimplicial
resolution $\Gamma_I:\category{C}\rightarrow\Delta\category{M}$, we
obtain a functor
$\category{C}\times\Delta\rightarrow\category{M}:(U,[n])\mapsto 
\Gamma(n)(U)$.  For any object $X$, we can consider the over-category
(resp. comma category in Mac Lane's terminology \cite[Section
  II.6]{maclane:1998:categories}) 
$(\category{C}\times\Delta\downarrow X)$ and the canonical diagram
$(\category{C}\times\Delta\downarrow X)\rightarrow
\category{M}:\Gamma(n)(U)\mapsto U\times\Delta^n$. 

\begin{lemma}
\label{lem:decompfib}
Let $T$ be a site, and let 
$p:E\rightarrow B$ be a fibration of fibrant simplicial sheaves. Then
$p$ is weakly   equivalent to the morphism of simplicial sheaves
\begin{displaymath}
\hocolim\diagram{F}\rightarrow \hocolim(T\times \Delta\downarrow B),
\end{displaymath}
where $(T\times\Delta\downarrow B)$ is the canonical diagram
associated to some fixed cosimplicial resolution, and the diagram
$\diagram{F}$ is the diagram of homotopy fibres: the index category is
still $(T\times\Delta\downarrow B)$, but an object
$U\times\Delta^n\rightarrow B$ is mapped to the pullback
$(U\times\Delta^n)\times_B E$, which is the fibre of $p$ over $U$.  
\end{lemma}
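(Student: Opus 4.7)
The plan is to combine Dugger's canonical homotopy colimit decomposition with homotopy distributivity, applied to the pullback of $p$ over each cell of the decomposition of $B$.

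First I would invoke Dugger's result \cite{dugger:2001:combinatorial}: for the fixed cosimplicial resolution, the canonical diagram $\diagram{Y}:=(T\times\Delta\downarrow B)\to\simplicial{\topos{T}}$, sending $(U\times\Delta^n\to B)$ to $U\times\Delta^n$, is a homotopy colimit diagram whose (homotopy) colimit is $B$. In particular the natural map $\hocolim\diagram{Y}\to\colim\diagram{Y}=B$ is a weak equivalence, which handles the base side of the claimed equivalence.

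Next, I would pull $p$ back pointwise to obtain $\diagram{F}$: send $(U\times\Delta^n\to B)$ to the strict pullback $(U\times\Delta^n)\times_B E$, together with the evident projection $\diagram{F}\to\diagram{Y}$. Because $p$ is a fibration between fibrant objects and the injective model structure is proper, each such strict pullback agrees with the corresponding homotopy pullback. Since $\simplicial{\topos{T}}$ is a topos, colimits are universal, and so
\begin{displaymath}
\colim\diagram{F}\cong(\colim\diagram{Y})\times_B E\cong B\times_B E\cong E.
\end{displaymath}
Consequently, for every $i\in(T\times\Delta\downarrow B)$ the resulting square of the form (\ref{eq:distrib1}), with corners $\diagram{F}(i)$, $\diagram{Y}(i)$, $E$, $B$, is a strict pullback along the fibration $p$ and hence a homotopy pullback.

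Now I would apply clause (HD i) of homotopy distributivity, \prettyref{prop:hodistrib}, to the morphism of diagrams $\diagram{F}\to\diagram{Y}$: since $\diagram{Y}$ is a homotopy colimit diagram and each of the above squares is a homotopy pullback, $\diagram{F}$ is itself a homotopy colimit diagram, i.e.\ $\hocolim\diagram{F}\to\colim\diagram{F}=E$ is a weak equivalence. The two weak equivalences $\hocolim\diagram{F}\simeq E$ and $\hocolim\diagram{Y}\simeq B$ are compatible with $p$ on the target and with its pullback-projection on the source, yielding the desired identification of $p$ with $\hocolim\diagram{F}\to\hocolim\diagram{Y}$.

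The main obstacle I anticipate is matching Dugger's canonical decomposition precisely with the hypotheses of (HD i): one must check that the strict colimit of the canonical diagram really equals $B$ on the nose in simplicial sheaves (so that the identification $\colim\diagram{F}\cong E$ via universality is meaningful), and that the cosimplicial resolution is chosen well enough that the $\hocolim$-to-$\colim$ comparison for $\diagram{Y}$ is indeed a weak equivalence. Both points are standard once one recognises $(T\times\Delta\downarrow B)$ as the density presentation of $B$, combined with properness of the injective model structure and universality of colimits in the topos.
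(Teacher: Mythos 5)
Your proposal is correct and follows exactly the route the paper intends (and only sketches in the prose preceding the lemma): Dugger's canonical homotopy colimit decomposition of the base, pointwise pullback of the fibration, universality of colimits in the topos to identify the strict colimit of the fibre diagram with $E$, and clause (HD i) of homotopy distributivity to conclude that the fibre diagram is a homotopy colimit diagram. The only cosmetic caveat is that citing Proposition \ref{prop:hodistrib} formally requires enough points, whereas the general case rests on Rezk's theorem; the paper itself elides this distinction throughout, so your argument is consistent with its conventions.
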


This is not as useful as the same construction for simplicial sets,
since the homotopy types of the various $U\in T$ are
different, which is the same as saying that a simplicial sheaf is
not locally contractible. Therefore, not all of the simplicial
sheaves $(U\times\Delta^n)\times_B E$ are weakly equivalent.

\section{Preliminaries on Fibre Sequences}
\label{sec:fibseq} 

We first repeat the definition of fibre sequences in model
categories, taken from \cite{hovey:1998:modelcats}. For details
of the proof see \cite[Theorem 6.2.1]{hovey:1998:modelcats}.

\begin{definition}
\label{def:fibaction}
Given a fibration $p:E\rightarrow B$ of fibrant objects with fibre
$i:F\rightarrow E$. There is an action of $\Omega B$ on $F$, given as
follows. Let 
$h:A\times I\rightarrow B$ represent $[h]\in[A,\Omega B]$ and let
$u:A\rightarrow F$ represent $[u]\in[A,F]$. We define $\alpha:A\times
I\rightarrow E$ as the lift in the following diagram:
\begin{center}
  \begin{minipage}[c]{10cm}
    \xymatrix{
      A \ar[r]^{i\circ u} \ar[d]_{i_0} & E \ar[d]^p \\
      A\times I \ar[r]_h & B
    }
  \end{minipage}
\end{center}
Then define $[u].[h]=[w]$ with $w:A\rightarrow F$ to be the unique map 
satisfying $i\circ w=\alpha\circ i_1$. 

This defines a natural right action of $[A,\Omega B]$ on $[A,F]$ for
any $A$, which  suffices to provide an action of $\Omega B$ on $F$. 
\end{definition}

Note that the action of $\Omega B$ on $F$ is an action in the homotopy
category: $\Omega B$ acts on $F$ only up to homotopy since the action
is defined by using the homotopy lifting property,
cf. \cite{stasheff:1974}. 

This motivates the definition of fibre sequences \cite[Definition
  6.2.6]{hovey:1998:modelcats}, given as follows:

\begin{definition}
\label{def:fibreseq}
Let $\category{C}$ be a pointed model category. 
A \emph{fibre sequence} is a diagram $X\rightarrow Y\rightarrow Z$
together with a right action of $\Omega Z$ on $X$ that is isomorphic
in $\operatorname{Ho}\category{C}$ to a diagram
$F\stackrel{i}{\longrightarrow} E\stackrel{p}{\longrightarrow} B$
where $p$ is a fibration of fibrant objects with fibre $i$ and $F$ has
the right $\Omega B$-action of \prettyref{def:fibaction}. 
\end{definition}

The following proposition shows that fibrations induce fibre sequences
in the sense of of \prettyref{def:fibreseq}.

\begin{proposition}
\label{prop:properfibre}
Let $\category{C}$ be a proper pointed model category. Let
$p:E\rightarrow B$ be a fibration, and denote by 
$F$ a cofibrant replacement of $p^{-1}(\ast)$. Then 
$F\rightarrow E\stackrel{p}{\longrightarrow} B$ is a   fibre
sequence. 
\end{proposition}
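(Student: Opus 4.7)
The plan is to reduce to the fibrant case already handled by \prettyref{def:fibreseq}, via a carefully chosen fibrant replacement of the map $p:E\rightarrow B$, and then track the behaviour of the strict fibre under this replacement.

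First, I would construct a fibration of fibrant objects weakly equivalent to $p$. Since $B$ need not itself be fibrant, choose a trivial cofibration $\iota_B:B\rightarrow B'$ with $B'$ fibrant, and factor the composition $\iota_B\circ p:E\rightarrow B'$ as a trivial cofibration $\iota_E:E\rightarrow E'$ followed by a fibration $p':E'\rightarrow B'$; because $B'$ is fibrant and $p'$ is a fibration, $E'$ is fibrant as well. This yields a commutative square with horizontal weak equivalences, and so an isomorphism $[E\rightarrow B]\cong [E'\rightarrow B']$ in the arrow category of $\operatorname{Ho}\category{C}$.

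Second, I would compare strict fibres. Let $F'=(p')^{-1}(\ast)=\ast\times_{B'}E'$, where $\ast\rightarrow B'$ is the composite of the basepoint $\ast\rightarrow B$ with $\iota_B$. The universal property of pullback provides a natural map $p^{-1}(\ast)\rightarrow F'$. Since both $p$ and $p'$ are fibrations and $\category{C}$ is right proper, these two strict pullbacks compute the corresponding homotopy pullbacks. The map of cospans $(\ast\rightarrow B\leftarrow E)\to(\ast\rightarrow B'\leftarrow E')$ consists entirely of weak equivalences, so the induced map $p^{-1}(\ast)\rightarrow F'$ of homotopy pullbacks is a weak equivalence.

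Third, combining with the weak equivalence $F\rightarrow p^{-1}(\ast)$ provided by the cofibrant replacement, one obtains a chain of weak equivalences $F\simeq p^{-1}(\ast)\simeq F'$. Hence the diagram $F\rightarrow E\rightarrow B$ is isomorphic in $\operatorname{Ho}\category{C}$ to $F'\rightarrow E'\rightarrow B'$, which is precisely a fibration of fibrant objects together with its strict fibre. The right $\Omega B'$-action on $F'$ defined via the lifting property in \prettyref{def:fibaction} transports along the induced isomorphisms $\Omega B\cong\Omega B'$ and $F\cong F'$ in $\operatorname{Ho}\category{C}$ to a right $\Omega B$-action on $F$, witnessing $F\rightarrow E\rightarrow B$ as a fibre sequence in the sense of \prettyref{def:fibreseq}.

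The only delicate point is the second step: the comparison of the strict fibre $p^{-1}(\ast)$ with the strict fibre of the fibrant replacement $p'$. The key tool is right properness, which guarantees that strict pullbacks along fibrations coincide with homotopy pullbacks; the desired weak equivalence then follows from the invariance of homotopy pullbacks under weak equivalences of cospans.
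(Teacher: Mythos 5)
Your argument is correct and follows the intended route: the paper states this proposition without proof, deferring to Hovey (Theorem 6.2.1) for the case of a fibration of fibrant objects, and your reduction to that case --- fibrant replacement of $p:E\rightarrow B$ through a trivial cofibration on the base and on the total space, followed by the right-properness comparison of the strict fibres as homotopy pullbacks and transport of the $\Omega B'$-action --- is precisely how the general statement is meant to be deduced. Nothing to add.
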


\subsection{Locally trivial morphisms}

Already in the case of simplicial sets, one has to restrict the
classification problem for fibrations to obtain a classifying
space. One possible such restriction is to consider only base spaces
$B$ which are connected. Another approach is to consider only
fibrations $p:E\rightarrow B$ for which the fibres $p^{-1}(b)$ have
the weak homotopy type of $F$ for all $b\in B$. 

Also in the simplicial sheaf case, we need such a restriction. 
The obvious way to define connectedness for simplicial sheaves is
the one used e.g. in 
\cite[Corollary 2.3.22]{morel:voevodsky:1999:a1}. 
\begin{definition}
\label{def:conn}
Let $X$ be a pointed simplicial sheaf on a Grothendieck site
$T$. We say that $X$ is \emph{connected} if $L^2\pi_0 X=\ast$, where
$L^2$ denotes sheafification. In other words, for any point $x$ of the
topos  $\topos{T}$, we require that the simplicial set $x^\ast(X)$ is
connected.
\end{definition}

The main difference to the topological notion of connectedness is that 
a topological space is always the disjoint union of its connected
components. This is no longer true for simplicial sheaves. The
representable sheaves of a site can be viewed as constant
simplicial sheaves; usually they are neither connected in the above
sense nor decomposable into a direct sum of connected sheaves. 

The topological way out of the connectivity problem therefore  becomes
a little awkward. We will consider a different type of 
condition which makes sure that the fibre sequences over a general
simplicial sheaf form a set (at least after passing to equivalence
classes).  This is done by introducing local triviality with respect
to a Grothendieck topology -- the least common denominator of the
algebraic topology and algebraic geometry usage of terms like
fibration. 

\begin{definition}
Let $T$ be a Grothendieck site. 
We say that a morphism $p:E\rightarrow B$ of simplicial sheaves is
\emph{locally trivial with fibre $F$}, if for each object $U$ in $T$
and each morphism $U\times\Delta^n\rightarrow B$, there exists a covering
$\bigsqcup U_i\rightarrow U$ such that there are weak equivalences
\begin{displaymath}
E\times_B (U_i\times\Delta^n)\simeq F\times (U_i\times\Delta^n).
\end{displaymath}
\end{definition}

\begin{example}
As an example, consider the category of smooth manifolds with the
Grothendieck topology generated by the open coverings. 
A fibre sequence $F\rightarrow E\rightarrow B$ is locally trivial if
for each pullback $E\times_BM\rightarrow M$ of this sequence to a
smooth manifold $M$, there exists a covering $\bigsqcup U_i\rightarrow
M$ of $M$ by open submanifolds such that $U_i\times_BE\simeq F$. But a
fibration $E\rightarrow M$ over a smooth (connected) manifold $M$ is
always locally trivial in this sense: for each contractible open 
submanifold $U$ of $M$, then $U\times_ME$ is weakly equivalent to the
point set fibre over any point of $U$. 
Therefore, fibrations of connected topological spaces are indeed
locally trivial in the above sense. 

Note also that the local triviality condition forces all points to
have fibres weakly equivalent to $X$. This shows that the above local
triviality condition reduces to the usual assumptions used e.g. in
\cite{allaud:1966:fibre}. 
\end{example}

We remark that the results discussed in \prettyref{sec:hocolim} are an
analogue of the theory of quasi-fibrations,
cf. \cite{dold:thom,dold:lashof}. In fact, we have the following:

\begin{proposition}
Let $p:E\rightarrow B$ be a locally trivial morphism of pointed
simplicial sheaves with fibre $F=p^{-1}(\ast)$. Then $F\rightarrow 
E\rightarrow B$ is a fibre sequence in the sense of
\prettyref{def:fibreseq}. 
\end{proposition}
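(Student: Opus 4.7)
The plan is to prove this by comparison with a fibrant replacement of $p$ and then invoke \prettyref{prop:properfibre}. First, factor $p$ as $E \xrightarrow{j} \tilde{E} \xrightarrow{\tilde{p}} B$ with $j$ a trivial cofibration and $\tilde{p}$ a fibration; we may as well assume $B$ (and hence $\tilde{E}$) is fibrant. Set $\tilde{F} := \tilde{p}^{-1}(\ast)$. By \prettyref{prop:properfibre}, the triple $\tilde{F} \to \tilde{E} \to B$ is already a fibre sequence with the canonical $\Omega B$-action of \prettyref{def:fibaction}. The morphism $j$ induces a natural comparison $F \to \tilde{F}$ via the universal property of the pullback along $\ast \to B$.

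Once the comparison $F \to \tilde{F}$ is shown to be a weak equivalence, the triple $F \to E \to B$ becomes isomorphic in $\Ho\simplicial{\mathcal{E}}$ to $\tilde{F} \to \tilde{E} \to B$; transporting the $\Omega B$-action across this isomorphism then exhibits $F \to E \to B$ as a fibre sequence in the sense of \prettyref{def:fibreseq}. So the entire proof reduces to the claim that the point-set fibre $F = p^{-1}(\ast)$ agrees with the homotopy fibre, which $\tilde{F}$ models since $\tilde{p}$ is a fibration of fibrant objects. This is the simplicial-sheaf analogue of the Dold--Thom quasi-fibration criterion, and it is precisely the point at which local triviality enters.

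I would verify this equivalence locally in the topos. By \prettyref{prop:pullbackloc}, homotopy pullback squares are preserved and, along surjective geometric morphisms, reflected; thus it suffices to check that the canonical square with vertices $F, E, \ast, B$ becomes a homotopy pullback after pulling back along a Boolean localization (or, assuming enough points, stalkwise). Local triviality is manifestly preserved by inverse image functors: the pullback of a trivialising cover $\bigsqcup U_i \to U$ exhibits $p^\ast E$ as locally weakly equivalent to the projection $p^\ast F \times (U_i \times \Delta^n) \to U_i \times \Delta^n$, which is a bona fide fibration with point-set fibre equal to its homotopy fibre.

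To glue these local identifications into the desired global weak equivalence, I would combine the canonical homotopy colimit decomposition (\prettyref{lem:decompfib}), which presents $B \simeq \hocolim_{(U \times \Delta^n \to B)}(U \times \Delta^n)$, with Puppe's theorem (\prettyref{prop:diagbase}) applied to the induced diagram of pullbacks $E \times_B (U \times \Delta^n)$. After refining each term along a trivialising cover, the associated diagram of homotopy fibres is objectwise weakly equivalent to the constant diagram at $F$, so its homotopy colimit is $F$, and Puppe's theorem identifies this with $\hofib(p)$. The main obstacle is precisely this coherent refinement step: the trivialising covers depend on the chosen simplex, and turning the pointwise trivialisations into a diagram of weak equivalences requires care. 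The cleanest resolution is to perform the verification stalkwise (or over a Boolean localization), where one is reduced to the classical Dold--Thom argument for simplicial sets, and then to transport the conclusion back to $\simplicial{\mathcal{E}}$ using the reflection statement of \prettyref{prop:pullbackloc}.
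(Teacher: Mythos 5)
The paper states this proposition without proof, so there is no argument of the author's to compare yours against; what you propose is a proof assembled from the paper's toolbox in \prettyref{sec:hocolim}, and the overall strategy is the right one. The reduction to showing that the canonical map $p^{-1}(\ast)\rightarrow\hofib(p)$ is a weak equivalence (equivalently, that the square with vertices $F$, $E$, $\ast$, $B$ is a homotopy pullback), followed by a stalkwise verification using the reflection half of \prettyref{prop:pullbackloc} and the classical quasi-fibration argument for simplicial sets, is exactly in the spirit of \prettyref{prop:hodistrib} and does work. Two points deserve explicit mention: the trivializing equivalences $E\times_B(U_i\times\Delta^n)\simeq F\times(U_i\times\Delta^n)$ must be read as equivalences \emph{over} $U_i\times\Delta^n$ (otherwise neither your argument nor the proposition itself survives), and the reflection argument requires a surjective geometric morphism from a product of points or a Boolean localization --- the same standing hypothesis under which the paper proves homotopy distributivity.

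However, the alternative glueing argument you sketch via Puppe's theorem contains a step that fails as stated. The diagram of homotopy fibres attached to $i\mapsto E\times_B(U\times\Delta^n)$ over the base $B$ is $i\mapsto\hofib\bigl(E\times_B(U\times\Delta^n)\rightarrow B\bigr)$, which is \emph{not} objectwise weakly equivalent to $F$: local triviality identifies it with $F\times\hofib(U\times\Delta^n\rightarrow B)$, and the second factor is a pulled-back path object of $B$, not a point (it may even be empty if the simplex misses the basepoint component). Moreover, even for a diagram that is objectwise weakly equivalent to the constant diagram at $F$, the homotopy colimit is the product of $F$ with the nerve of the index category rather than $F$ itself, so the inference ``objectwise $F$, hence homotopy colimit $F$'' is a non sequitur. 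The repair is either to compute $\hocolim_i \bigl(F\times\hofib(U\times\Delta^n\rightarrow B)\bigr)\simeq F\times\hofib(\id_B)\simeq F$ by a second application of Puppe's theorem, or --- more directly --- to apply (HD ii) to the morphism of diagrams $\bigl(E\times_B(U\times\Delta^n)\bigr)\rightarrow\bigl(U\times\Delta^n\bigr)$ over the canonical decomposition of $B$, for which the fibrewise trivializations supply the hypothesis that the squares of type (\ref{eq:distrib2}) are homotopy pullbacks, and then to specialize the resulting homotopy pullback squares to the basepoint. Since you ultimately fall back on the stalkwise verification, the proof is completable, but the Puppe step should be corrected or dropped.
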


\section{First Variant: Brown Representability}
\label{sec:one}

In this section, we will construct classifying spaces of fibre
sequences via the Brown representability theorem. For topological
spaces, this approach was used by Allaud \cite{allaud:1966:fibre},
Dold \cite{dold:1966}, and Sch\"on \cite{schoen:1982}. 
A textbook treatment of this approach can be found in
\cite{rudyak:1998:cobordism}.

\subsection{Fibre Sequences Functor}

\newcommand{\fibfun}[1]{\ensuremath{\mathcal{H}^{\operatorname{pt}}({#1},F)}}

We now define the functor mapping a simplicial sheaf to the \emph{set}
of fibre sequences with fixed fibre over this simplicial sheaf. We
will work in the injective 
model category of \emph{pointed} simplicial sheaves on some site 
$T$. This is due to the fact that fibre sequences as in 
\prettyref{def:fibreseq} are only defined in pointed model
categories. Moreover, the Brown representability theorem also requires
pointed model categories. There are examples in
\cite{heller:1981:brown} showing that Brown representability might
fail already for unpointed topological spaces. 

\begin{definition}
\label{def:fibfun}
Recall from \prettyref{def:fibreseq} that a fibre sequence over
$X$ with fibre $F$ is a diagram $F\rightarrow E
\stackrel{p}{\longrightarrow} X$ with an $\Omega X$-action on $F$ which is
isomorphic in the homotopy category to the fibre sequence associated
to a fibration $p:\tilde{E}\rightarrow \tilde{X}$ of fibrant
replacements $\tilde{X}$ of $X$ and $\tilde{E}$ of $E$. Up to
isomorphism in the homotopy category, we will usually assume that
our fibre sequence $F\rightarrow E \stackrel{p}{\longrightarrow} X$ is
represented by some actual fibration over some fibrant replacement
$\tilde{X}$ of $X$. 

A \emph{morphism of fibre sequences} is a diagram in
$\simplicial{\topos{T}}$ 
\begin{center}
  \begin{minipage}[c]{10cm}
    \xymatrix{
      F_1 \ar[d]_f \ar[r] & E_1 \ar[d]_g \ar[r]^{p_1} & B_1 \ar[d]^h \\
      F_2 \ar[r] & E_2 \ar[r]_{p_2} & B_2,
    }
  \end{minipage}
\end{center}
such that the left square commutes up to homotopy, and the right
square is commutative, and $f$ is $\Omega h$-equivariant, i.e. the
following diagram is homotopy commutative:
\begin{center}
  \begin{minipage}[c]{10cm}
    \xymatrix{
      \Omega B_1\times F_1 \ar[r] \ar[d]_{\Omega h\times f} & F_1
      \ar[d]^f \\
      \Omega B_2\times F_2\ar[r] & F_2.
    }
  \end{minipage}
\end{center}

This in particular allows to define what an equivalence of fibre
sequences over $X$ is:
Two fibre sequences over $X$ with fibre $F$ are equivalent if there is
an isomorphism of fibre sequences 
\begin{center}
  \begin{minipage}[c]{10cm}
    \xymatrix{
      F \ar[d]_\id \ar[r] & E_1 \ar[d] \ar[r] & X
      \ar[d]^{\id}\\ 
      F \ar[r] & E_2 \ar[r] & X,
    }
  \end{minipage}
\end{center}
in the homotopy category
$\operatorname{Ho}\simplicial{\topos{T}}$. We denote this by $E_1\sim 
E_2$.  
\end{definition}

\begin{remark}
\label{rem:rem1}
\begin{enumerate}[(i)]
\item The following can be assumed without loss of generality:
  we can assume that the base $B$ is fibrant, that the morphism $p$ is
  a fibration, and that $F$ is the point-set fibre of $p$ over
  $\ast\hookrightarrow B$. This basically follows from 
  \prettyref{prop:properfibre}.   
\item Note that in the definition of a morphism of fibre sequences we
  can always arrange for the right square to be commutative on the
  nose. We just lift the morphism $h\circ p_1$ along the fibration
  $p_2$. This makes the right square commutative, and leaves the left
  square commutative up to homotopy. 
\item In the case of topological spaces, the above definition was used
  by Allaud, cf. \cite{allaud:1966:fibre}. It coincides with the
  notion of fibre homotopy equivalence by a theorem of Dold,
  cf. \cite[Theorem 6.3]{dold:1963:fib}.
\end{enumerate}
\end{remark}

\begin{lemma}
Equivalence of fibre sequences is an equivalence relation.
\end{lemma}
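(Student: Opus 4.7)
The plan is to verify the three conditions of an equivalence relation separately, in each case reducing the check to formal properties of the homotopy category and of the functorial right $\Omega X$-action on fibres.

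For \emph{reflexivity}, given a fibre sequence $F\to E\to X$ in the sense of \prettyref{def:fibreseq}, the identity morphism $(\id_F,\id_E,\id_X)$ is plainly an isomorphism in $\operatorname{Ho}\simplicial{\topos{T}}$ and the two squares and the equivariance square are all strictly commutative. For \emph{symmetry}, suppose we are given an equivalence realized by some $g\colon E_1\to E_2$ in $\operatorname{Ho}\simplicial{\topos{T}}$ making the required squares commute (left square up to homotopy, right square on the nose after adjusting $g$ using \prettyref{rem:rem1}(ii), and the $\Omega(\id_X)$-equivariance square homotopy commutative). Since the middle, left and right vertical arrows are equivalences (the outer two being identities, and $g$ being an isomorphism in the homotopy category), we may invert $g$ in $\operatorname{Ho}\simplicial{\topos{T}}$; the inverse automatically inherits the required commutativities by formal properties of isomorphisms in a category, since each diagram whose commutativity we need is obtained from the diagram for $g$ by inversion of $g$ and of the identity maps on $F$ and $X$.

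For \emph{transitivity}, given equivalences $E_1\sim E_2$ via $g_{12}$ and $E_2\sim E_3$ via $g_{23}$, we take $g_{13}:=g_{23}\circ g_{12}$ in $\operatorname{Ho}\simplicial{\topos{T}}$. This is again an isomorphism there, and composing the two right squares (strictly commutative) and the two left squares (homotopy commutative) yields the analogous squares for $g_{13}$; similarly pasting the two equivariance squares along the common middle column yields the required $\Omega(\id_X)$-equivariance for the composite, since the action of $\Omega X$ on $F$ is functorial in the fibre sequence in the homotopy category, cf.\ \prettyref{def:fibaction}.

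The main subtlety is bookkeeping the various levels of strictness: the right squares can be made strictly commutative as in \prettyref{rem:rem1}(ii), while the left square and the equivariance square live only up to homotopy. I expect the only genuine issue to be verifying that the inverse in $\operatorname{Ho}\simplicial{\topos{T}}$ preserves the $\Omega h$-equivariance with $h=\id_X$; but this reduces to the observation that $\Omega(\id_X)=\id_{\Omega X}$, so the equivariance condition is simply that $g$ intertwines the two right $\Omega X$-actions on $F$, a condition manifestly closed under taking inverses and composites in the homotopy category.
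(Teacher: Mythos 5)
Your proof is correct and takes essentially the same route as the paper, which simply observes that equivalence is defined via isomorphism in the homotopy category and hence is automatically reflexive, symmetric and transitive. Your additional bookkeeping of the $\Omega X$-equivariance under inversion and composition is a sound (if more explicit) unwinding of that same one-line observation.
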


\begin{proof}
This is clear since equivalence was defined by isomorphism in the
homotopy category, which implies reflexivity, symmetry and
transitivity. 
%
\end{proof}

\begin{definition}[Pullback of Fibre Sequences]
\label{def:pbfibfun}
Let $f:B_1\rightarrow B_2$ be a pointed map, and let
$F\rightarrow E_2\rightarrow B_2$ be a fibre sequence. We define a fibre
sequence with fibre $F$ over $B_1$ as follows:
\begin{center}
  \begin{minipage}[c]{10cm}
    \xymatrix{
      & F \ar[ld]_a \ar'[d][dd] \ar[rd] \\
      E_1 \ar[rr] \ar[dd]_{p_1}  && E_2 \ar[dd]^{p_2} \\
      & \ast \ar[ld] \ar[rd] \\
      B_1 \ar[rr]_f && B_2
    }
  \end{minipage}
\end{center}
We assume that $p_2$ is a fibration, and define
$E_1$ as the pullback $E_2\times_{B_2} B_1$ of $p_2$ along $f$. 
Note that $E_1$ is therefore also the homotopy pullback of $p_2$ along
$f$, and $p_1$ is a fibration. By the universal property of pullbacks,
we have a morphism $a:F\rightarrow E_1$. Moreover, by the pullback
lemma we have $p_1^{-1}(\ast)=F$, and since $p_1$ is a fibration, this
is also the homotopy fibre.  
\end{definition}

Let $T$ be a Grothendieck site. For given pointed simplicial
sheaves $X$ and $F$ on $T$, let $\fibfun{X}$ denote the collection
of equivalence classes of locally trivial fibre sequences over $X$
with fibre $F$ modulo the equivalence $\sim$. We want to show that
this is a 
set. 

\begin{proposition}
\label{prop:fibfunset}
For any $X,F\in\simplicial{\topos{T}}_\ast$, the collection
$\fibfun{X}$ is a set. Hence, with the pullbacks as in
\prettyref{def:pbfibfun}, we have a functor
\begin{displaymath}
\fibfun{-}:\simplicial{\topos{T}}_\ast\rightarrow \set_\ast.
\end{displaymath}
The natural base point of $\fibfun{X}$ is given by the trivial fibre
sequence $F\rightarrow X\times F\rightarrow X$, where the first map is
inclusion via the base point $\ast\rightarrow X$, and the second is
the product projection.
\end{proposition}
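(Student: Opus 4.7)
The plan is to verify three things: that $\fibfun{X}$ is set-sized, that pullback of fibre sequences is well-defined on equivalence classes and functorial, and that the trivial sequence serves as the base point. The only non-trivial item is set-hood; functoriality and the base-point verification will follow essentially formally from the pullback lemma \prettyref{lem:hopullback}, properness of the injective model structure, and direct inspection.

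For set-hood, I would first use \prettyref{rem:rem1}(i) to represent each equivalence class by an honest fibration $p:E\to\tilde{X}$ of fibrant objects with $F$ as point-set fibre. Since the injective model structure is combinatorial (\prettyref{thm:jardine}), there is a regular cardinal $\kappa$ such that every object of $\simplicial{\topos{T}}$ is weakly equivalent to a $\kappa$-presentable one. Local triviality means that over each elementary piece $U\times \Delta^n\to \tilde{X}$ appearing in the canonical homotopy colimit decomposition of \prettyref{lem:decompfib}, the pullback of $E$ looks like $F\times U\times \Delta^n$; assembling these trivializations via descent bounds the cardinality of a representative of $E$ by a cardinal depending only on $X$, $F$, and the site $T$. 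Hence the collection of equivalence classes forms a set. The crucial and somewhat subtle step here is turning local trivializations, which are only weak equivalences and only over a cover, into an honest cardinality bound; this is where combinatoriality of the model category is essential.

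For the functor structure, pullback preserves local triviality tautologically: pull back a trivializing cover along the given map of bases. By properness, taking pullback along $f:X_1\to X_2$ preserves weak equivalences between fibrations over $X_2$, which, combined with \prettyref{lem:hopullback}, shows that the pullback operation descends to equivalence classes and respects composition of maps (two iterated pullbacks along $g$ and $f$ are canonically identified with a single pullback along $f\circ g$). Finally, the product fibration $X\times F\to X$ is globally trivial, hence locally trivial, and is manifestly preserved under pullback along any pointed map; it therefore serves as a natural base point, making $\fibfun{-}$ a functor to $\set_\ast$.
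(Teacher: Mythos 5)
Your overall architecture matches the paper's --- set-hood is the real content, while functoriality, well-definedness of pullback on equivalence classes (via properness and the cogluing/pullback lemmas), preservation of local triviality, and the base point are handled essentially as in the paper --- but your route to set-hood is genuinely different. The paper argues by induction over a homotopy colimit decomposition of the base: it first treats representables $U\in T$ by observing that a locally trivial fibre sequence over $U$ is reconstructed, via homotopy distributivity (\prettyref{prop:hodistrib}), from a covering together with transition morphisms, both of which range over sets because the site is essentially small; it then extends to $U\times\Delta^n$ by homotopy invariance and to a general base via \prettyref{lem:decompfib} together with the wedge and Mayer--Vietoris properties (\prettyref{prop:fibfunwedge} and \prettyref{prop:fibfunmv}, used as forward references), the pushout case requiring the extra observation that the fibres of the comparison map $d$ are controlled by the set of automorphisms of a given fibre sequence. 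Your cardinality-bound argument replaces this induction by a single appeal to the existence of small representatives: every class contains one of bounded size, so the collection of classes is a quotient of a set. This is arguably cleaner in that it avoids the forward references, and the underlying count is the same (covers times transition data).

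However, the step you yourself flag as subtle is also the one you misattribute. Combinatoriality by itself gives you nothing here: knowing that objects admit $\kappa$-presentable models does not bound representatives of a fibre sequence over $X$ unless you can rebuild $p:E\to\tilde{X}$, as a map over $\tilde{X}$, from its local trivializations --- and that reconstruction is exactly homotopy distributivity (HD~i), not presentability. Concretely, \prettyref{lem:decompfib} expresses $E$ as a homotopy colimit of the pullbacks $(U\times\Delta^n)\times_B E$, which may themselves be large; local triviality identifies them with $F\times(U_i\times\Delta^n)$ only after refining along a cover and only up to weak equivalence, so you must (a) refine the canonical diagram along the trivializing covers and re-apply distributivity, and (b) replace the pieces by their trivial models and rectify the resulting homotopy-coherent transition data before homotopy invariance of the homotopy colimit yields a small representative. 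The paper's base case makes (a) and (b) explicit as ``set of coverings times set of transition morphisms.'' Your write-up should invoke \prettyref{prop:hodistrib} at this point; combinatoriality is needed only for the canonical decomposition to exist. With that correction your argument goes through.
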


\begin{proof}
We first show that for every simplicial sheaf $X$ there is only a
set of equivalence classes of fibre sequences $F\rightarrow
E\rightarrow X$. We follow the lines of \cite[Theorem
  IV.1.55]{rudyak:1998:cobordism}. Note that this includes forward
references to \prettyref{prop:fibfunhoinv},
\prettyref{prop:fibfunwedge} and \prettyref{prop:fibfunmv}.

We start with the case of fibre sequences $F\rightarrow E\rightarrow
U$ for $U\in T$ viewed as constant simplicial sheaf. We assume
$E\rightarrow U$ is actually a fibration. For the
above fibre sequence, there exists a covering $U_i$ of $U$ such
that $F\rightarrow E\times_U U_i\rightarrow_U U_i$ is a trivial fibre
sequence with given trivializations $E\times_U U_i\simeq F\times U_i$
and transition morphisms $F\times (U_i\times_U U_j)\rightarrow F\times
(U_i\times U_j)$ which are weak equivalences. 
Now \prettyref{prop:hodistrib} implies that the original fibre
sequence $F\rightarrow E\rightarrow U$ can be reconstructed up to
equivalence as the homotopy colimit 
\begin{displaymath}
F\rightarrow \hocolim E_i\rightarrow \hocolim U_i.
\end{displaymath}
The Grothendieck site is (essentially) small, so there is only a set
of coverings, and for a given covering, there is only a set of
possible transition morphisms. The set of all locally trivial fibre
sequences up to equivalence is therefore contained in the product of
the sets of all possible transition morphisms (indexed by the possible
coverings of $U$). It is therefore a set. 

Next, we extend this result to simplicial sheaves of the form
$U\times\Delta^n$ for $U\in T$. The argument in
\prettyref{prop:fibfunhoinv} is independent of the $\fibfun{-}$ being
sets. It therefore shows that for a weak equivalence $f:X\rightarrow
Y$,  if $\fibfun{X}$ is a set, then so is $\fibfun{Y}$. We find that
for any simplicial sheaf of the form $U\times \Delta^n$ with $U\in
T$, $\fibfun{U\times\Delta^n}$ is a set. 

Finally, we use the decomposition of fibrations over the canonical
homotopy colimit presentation of the base simplicial sheaf,
cf. \prettyref{lem:decompfib}. We consider $F$-fibre sequences over
$B$, and decompose $B$ as a homotopy colimit over the category of
simplices $(T\times\Delta\downarrow B)$. The simplicial sheaves
indexed by this diagram are of the form $U\times\Delta^n$ for $U\in
T$, and we have already shown that fibre sequences over these form a
set. Moreover, the site $T$ is (essentially) small, therefore the
diagram is set-indexed. 

We now have to show that for any set-indexed homotopy colimit
$\hocolim_\alpha X_\alpha$ of spaces $X_\alpha$ for which $\fibfun{X_\alpha}$
is a set, the collection
\begin{displaymath}
\fibfun{\hocolim_\alpha X_\alpha}
\end{displaymath}
is also a set. Since all homotopy colimits can be decomposed into
homotopy pushouts and wedges, it suffices to show this assertion for
these special homotopy colimits. 

The proof of \prettyref{prop:fibfunwedge} shows
that if $\fibfun{X_\alpha}$ is a set for a set-indexed collection
$X_\alpha$, then $\fibfun{\bigvee_\alpha X_\alpha}$ is also a set.

For the homotopy pushouts, we use the proof of
\prettyref{prop:fibfunmv}. We get a surjective morphism of classes
\begin{displaymath}
d:\fibfun{B_1\cup_{B_0} B_2} \twoheadrightarrow
\fibfun{B_1}\times_{\fibfun{B_0}} \fibfun{B_2}.
\end{displaymath}
By assumption, $\fibfun{B_1}\times_{\fibfun{B_0}} \fibfun{B_2}$ is a
set. The morphism $d$ decomposes a fibre sequence $E$ over
$B_1\cup_{B_0} B_2$ into the pullbacks of the fibre sequence $E$ to
$B_1$ resp. $B_2$. These fibre sequences are remembered in the element
in $\fibfun{B_1}\times_{\fibfun{B_0}} \fibfun{B_2}$. What is
forgotten, and what constitutes the kernel of $d$ is the isomorphism
between the pullbacks of $E$ to $B_1$ resp. $B_2$. Since there is only
a set of automorphisms for any given fibre sequence, the kernel of $d$
is also a set. This implies that $\fibfun{B_1\cup_{B_0} B_2}$ is also
a set. 

Therefore, $\fibfun{B}$ is a set for any simplicial sheaf $B$. 

We still need to check that the pullback is really well-defined. This
is a simple diagram check, using the cogluing lemma and
therefore needing properness:
assume we have two fibre sequences $E_1$ and $E_2$ over $B$,
which are isomorphic in the homotopy category. We may assume that
$p_i:E_i\rightarrow B$ are fibrations. If not, we choose
factorizations. 
The independence of the choice of such fibrant replacements is proven in
\prettyref{prop:help1}. 
We consider the pullback $E_i\times_B A$ of the fibre sequence $E_i$
along the morphism $f:A\rightarrow B$. The isomorphism in the homotopy
category lifts to a zig-zag of weak equivalences, so it suffices to
show that a weak equivalence $g:E_1\rightarrow E_2$ pulls back to a
weak equivalence $f:E_1\times_B A\rightarrow E_2\times_B A$. This
follows from the cogluing lemma \cite[Corollar
II.8.13]{goerss:jardine:1999:simplicial}.  

Finally, for any fibre sequence $F\rightarrow E\rightarrow B$, 
which is locally trivial in the $T$-topology the pullback
$F\rightarrow E\times_B B'\rightarrow B'$ along any morphism $B'$ is
again locally trivial. This follows by a simple argument from the
pullback lemma: the pullback of $E'=E\times_B B'$ along any morphism
$U\rightarrow B'$ for $U\in T$ is also the pullback of $E$ along
$U\rightarrow B'\rightarrow B$.
\end{proof}

\begin{proposition}
\label{prop:help1}
Let $T$ be a Grothendieck site. All spaces and maps appearing below
are in the category $\simplicial{\topos{T}}$ of simplicial sheaves on
$T$. 
\begin{enumerate}[(i)]
\item
For any commutative diagram
\begin{center}
  \begin{minipage}[c]{10cm}
    \xymatrix{
      E_1\ar[rr]^\simeq \ar[rd]_{p_1} && E_2 \ar[ld]^{p_2} \\
      &B
    }
  \end{minipage}
\end{center}
with $p_1$ and $p_2$ fibrations, the induced weak equivalence on the
fibres is equivariant for the $\Omega B$-action in the homotopy
category.

\item
Let $p:E\rightarrow B$ be any morphism with homotopy fibre $F=\hofib
p$. Then the class $[\tilde{p}]\in \fibfun{B}$ of a fibrant
replacement $\tilde{p}:\tilde{E}\rightarrow B$ of $p$ is independent
of the choice of fibrant replacement.  

\item
For any homotopy pullback 
\begin{center}
  \begin{minipage}[c]{10cm}
    \xymatrix{
      E_1 \ar[r] \ar[d]_p & E_2 \ar[d]^q \\
      B_1 \ar[r]_f & B_2,
    }
  \end{minipage}
\end{center}
we have $f^\ast [q]=[p]$.
\end{enumerate}
\end{proposition}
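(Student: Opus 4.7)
The plan is to prove the three parts in order, with (i) serving as the main technical input for (ii) and (iii).

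For (i), I would unwind \prettyref{def:fibaction}. Let $g:E_1\rightarrow E_2$ denote the given weak equivalence over $B$ and $g|_F:F_1\rightarrow F_2$ the induced map on fibres. Given $[h]\in[A,\Omega B]$ represented by $h:A\times I\rightarrow B$ and $[u]\in[A,F_1]$ represented by $u:A\rightarrow F_1$, the action $[u].[h]$ is computed from a lift $\alpha_1:A\times I\rightarrow E_1$ of $h$ along $p_1$ with $\alpha_1\circ i_0 = i_1\circ u$. Since the triangle commutes, the composite $g\circ\alpha_1:A\times I\rightarrow E_2$ satisfies $p_2\circ(g\circ\alpha_1)=p_1\circ\alpha_1=h$ and $g\circ\alpha_1\circ i_0 = g\circ i_1\circ u = i_2\circ(g|_F)\circ u$, so it is a valid lift for computing $[g|_F\circ u].[h]$ in $F_2$. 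The standard uniqueness of such lifts up to homotopy rel.\ the given boundary data then yields $g|_F([u].[h])=[g|_F\circ u].[h]$, which is precisely $\Omega B$-equivariance of $g|_F$ in the homotopy category.

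For (ii), given two fibrant replacements $\tilde{p}_i:\tilde{E}_i\rightarrow B$ of $p:E\rightarrow B$, I would use the induced model structure on the slice category $\simplicial{\topos{T}}/B$ (in which fibrant objects are fibrations to $B$): any two fibrant replacements of $p$ are weakly equivalent via a morphism over $B$. Concretely, factor the map $E\rightarrow\tilde{E}_1$ as a trivial cofibration followed by a trivial fibration and lift the trivial cofibration along the fibration $\tilde{p}_2$; the resulting triangle over $B$ satisfies the hypothesis of part (i). Hence the induced equivalence on fibres is $\Omega B$-equivariant, and the right square is commutative by construction, so the two fibrant replacements determine equivalent fibre sequences and represent the same class in $\fibfun{B}$.

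For (iii), I would replace $q$ by a fibration $\tilde{q}:\tilde{E}_2\rightarrow B_2$ via a weak equivalence $E_2\simeq\tilde{E}_2$ over $B_2$, which does not alter $[q]$ by part (ii). The strict pullback $\tilde{E}_2\times_{B_2}B_1\rightarrow B_1$ is then itself a fibration, and by \prettyref{def:pbfibfun} it represents $f^\ast[q]$. By the defining property of homotopy pullbacks, the morphism $E_1\rightarrow\tilde{E}_2\times_{B_2}B_1$ induced by the given homotopy pullback square is a weak equivalence over $B_1$, and part (ii) again upgrades this to the equality $[p]=f^\ast[q]$ in $\fibfun{B_1}$.

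The main obstacle is the careful verification in (i) that the two lifts $g\circ\alpha_1$ and any lift $\alpha_2$ computing $[g|_F\circ u].[h]$ genuinely induce homotopic maps $A\rightarrow F_2$. This is the standard uniqueness of path lifts for a fibration, but one has to check that the homotopy between them respects the fibre condition at $A\times\{0,1\}$, so that it descends to a homotopy in $F_2$ rather than merely in $E_2$; this is what makes the argument land in the homotopy category rather than giving a strict equality.
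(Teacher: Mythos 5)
Your proposal is correct and follows essentially the same route as the paper's proof: part (i) by composing the defining lift for the $\Omega B$-action with the equivalence $E_1\rightarrow E_2$, part (ii) by producing a weak equivalence over $B$ between the two fibrant replacements via a lifting argument and then invoking (i), and part (iii) by rectifying $q$ to a fibration so that the strict pullback computes $f^\ast[q]$ and the homotopy-pullback condition gives the comparison equivalence over $B_1$. The extra care you take in (i) about uniqueness of lifts up to fibrewise homotopy is exactly the point the paper leaves implicit.
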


\begin{proof}
(i) This follows since the action as in 
  \prettyref{def:fibaction} is given by liftings in a diagram:
  \begin{center}
    \begin{minipage}[c]{10cm}
      \xymatrix{
        A \ar[r]^{i\circ u} \ar[dd]_{i_0} & E_1 \ar[d]^\simeq \\
        & E_2 \ar[d]^{p_2} \\
        A\times I\ar[r]_h \ar@{.>}[uur]_\theta & B
      }
    \end{minipage}
  \end{center}
  The action of $\Omega B$ on $u$ is given by the lift $\theta$ which
  factors   through the fibre. Lifting to $E_1$ and composing with the
  weak   equivalence $E_1\rightarrow E_2$ yields a lift for
  $E_2$. Therefore,   the induced weak equivalence of the fibres is
  equivariant for the   $\Omega B$-action.

(ii) Note that the injective model structure on simplicial sheaves
is a proper model category, see \prettyref{thm:jardine}. 
By \prettyref{prop:properfibre}, $F\rightarrow E\rightarrow B$
is a fibre sequence for $p$ a fibration. Now for an arbitrary map
$p:E\rightarrow B$, we define $[p]$ as the fibre sequence associated
to the fibration in a factorization 
\begin{center}
\begin{minipage}[c]{10cm}
\xymatrix{
  E \ar[r]^\simeq_i \ar[rd]_p & \tilde{E} \ar[d]^{\tilde{p}} \\ &B.
}
\end{minipage}
\end{center}
We need to prove that this is independent of the factorization. We
consider two replacements of $p$ by a fibration: $p_1:E_1\rightarrow
B$ and $p_2:E_2\rightarrow B$. Note that by construction we have
trivial cofibrations $E_1 \stackrel{\cong}{\longrightarrow} E$
and $E_2\stackrel{\cong}{\longrightarrow} E$. We then consider
the lift in the following diagram:
\begin{center}
  \begin{minipage}[c]{10cm}
    \xymatrix{
      E \ar[r]^\simeq \ar[d]_\simeq & E_2 \ar[d]^{p_2} \\
      E_1 \ar[r]_{p_1} & B.
    }
  \end{minipage}
\end{center}
This is a weak equivalence by 2-out-of-3. As in the proof of
\prettyref{prop:properfibre} we obtain a weak equivalence
between the fibres $F_1$ and $F_2$. By (i), this morphism is
equivariant for the $\Omega B$-action, so $[p_1]=[p_2]$. 

(iii) Consider the homotopy pullback square in the statement of the
proposition. 
By \cite[Lemma II.8.16]{goerss:jardine:1999:simplicial}, there
exists a factorization of $q$ into a trivial cofibration
$i:E_2\rightarrow \tilde{E_2}$ and fibration
$\tilde{q}:\tilde{E_2}\rightarrow B_2$ such that the induced
morphism $E_1\rightarrow B_1\times_{B_2} \tilde{E_2}$ is a weak
equivalence. Note that $f^\ast[q]$ is given by
$B_1\times_{B_2}\tilde{E_2}$. By (ii) we can take any fibration
$\tilde{E_1}\rightarrow B_1$ with $E_1\rightarrow \tilde{E_1}$ a weak
equivalence, and by (i), the homotopy fibres of $\tilde{E_1}$ and
$B_1\times_{B_2}\tilde{E_2}$ are weakly equivalent and the weak
equivalence is $\Omega B$-equivariant. Therefore $f^\ast[q]=[p]$. 
\end{proof}

\subsection{Brown Representability}

The Brown representability theorem
is not really a single theorem, but rather a class of results stating
conditions under which a set-valued functor on a model or homotopy
category is representable. The first appearance is in the article of
Brown \cite{brown:1962:representability} in which it is proven that a
contravariant homotopy-continuous functor on the category of
topological spaces is representable, with main application to the
construction of spaces representing generalized cohomology theories.  
A more detailed analysis of why contravariant functors and pointed
model categories are necessary assumptions 
was done in \cite{heller:1981:brown}. 
Nowadays any reasonable textbook on algebraic topology contains a 
section on Brown representability for topological spaces. 

There are not so many results on Brown representability for general,
in particular unstable model categories. For a general model category,
Brown  representability usually fails, and at least some smallness
assumptions are necessary. In this paper, we use the 
representability theorem by Jardine for compactly generated model
categories, which is proven in \cite{jardine:2006:brown}.

There are several names for the condition on the functors. Functors
that satisfy the conditions for the representability were called
half-exact in \cite{brown:1962:representability}, but we use the term
homotopy-continuous. 
The terminology homotopy-continuous is reminiscent of Mac Lane's usage
of the term \emph{continuous} for a functor which preserves limits 
\cite[Section V.4]{maclane:1998:categories}. Homotopy-continuous
functors are the model category analogues of such continuous functors,
as the Brown representability is a version of the adjoint functor
theorem for model categories. 

\begin{definition}[Homotopy-Continuous Functor]
A functor
$F:\category{C}^{op}\rightarrow \set_\ast$ on a pointed model category
$\category{C}$ is called
\emph{homotopy-continuous} if it satisfies the following assumptions:
\begin{enumerate}[(HC i)]
\item $F$ takes weak equivalences to bijections.
\item $F(\pt)=\{\ast\}$.
\item For any coproduct $\bigvee_\alpha X_\alpha$ of a set
  $\{X_\alpha\}$ of objects of $\category{C}$ the following
  \emph{wedge axiom} is satisfied: 
  \begin{displaymath}
    F\big(\bigvee_\alpha X_\alpha\big)=\prod_\alpha F(X_\alpha).
  \end{displaymath}
\item For any homotopy pushout
  \begin{center}
    \begin{minipage}[c]{10cm}
      \xymatrix{
        A \ar[r] \ar[d] & X \ar[d]  \\
        B \ar[r] & Y,
      }
    \end{minipage}
  \end{center}
  the induced morphism is surjective:
  \begin{displaymath}
    F(Y)\twoheadrightarrow F(B)\times_{F(A)} F(X). 
  \end{displaymath}
  This is called the \emph{Mayer-Vietoris axiom}.
\end{enumerate}
\end{definition}




Now we recall Jardine's version of the Brown representability theorem
\cite[Theorem 19]{jardine:2006:brown}.
In this version, we need the following definition, cf. \cite[Section
3]{jardine:2006:brown}:

\begin{definition}
\label{def:cptjardine}
A model category $\mathcal{C}$ is called \emph{compactly generated},
if there is a set of compact cofibrant objects $\{K_i\}$ such that a
map $f:X\rightarrow Y$ is a weak equivalence if and only if it induces a
bijection 
\begin{displaymath}
[K_i,X]\xrightarrow{\phantom{X}\cong\phantom{X}} [K_i,Y]
\end{displaymath}
for all objects $K_i$ in the generating set. 
\end{definition}

This is a size condition that does not hold for all model categories
of simplicial sheaves. It is explained in \cite[Section 3,
p.88]{jardine:2006:brown} that the injective model structure on the
category simplicial sheaves on the Zariski resp. Nisnevich site is
compactly generated. 

\begin{theorem}[Brown Representability (after Jardine)]
\label{thm:brown2}
For a pointed, left proper, compactly generated model category
$\category{C}$ and a homotopy-continuous functor
$F:\category{C}^{op}\rightarrow \set_\ast$, there exists an object
$Y$ of $\category{C}$, a universal element $u\in F(Y)$, and a natural
isomorphism  
\begin{displaymath}
T_u:\Hom_{\operatorname{Ho}(\category{C})}(X, Y)
\xrightarrow{\phantom{X}\cong\phantom{X}} F(X): f\mapsto f^\ast(u)
\end{displaymath}
for any object $X$ of $\category{C}$. 
\end{theorem}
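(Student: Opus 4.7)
The plan is to construct $Y$ together with a universal element $u \in F(Y)$ by a transfinite small object argument driven by the compact generators $\{K_i\}$, and then to verify that $T_u$ is a bijection by reducing first to those generators. The four axioms (HC i)--(HC iv) are designed precisely to support such a construction.

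I would build an increasing sequence $Y_0 \to Y_1 \to Y_2 \to \cdots$ together with compatible $u_n \in F(Y_n)$. Start with $Y_0 = \bigvee_{(i,v)} K_i$ indexed by all pairs $(K_i, v)$ with $v \in F(K_i)$; by the wedge axiom (HC iii), the family $(v)$ assembles into a single $u_0 \in F(Y_0)$ whose restriction to the $(i,v)$-summand is $v$. This makes $T_{u_0} : \Hom_{\Ho(\category{C})}(K_i, Y_0) \to F(K_i)$ surjective. To enforce injectivity, at stage $n$ enumerate all triples $(K_i, f, g)$ with $f, g : K_i \to Y_n$ and $T_{u_n}(f) = T_{u_n}(g)$, and form the homotopy pushout
\begin{displaymath}
Y_{n+1} = Y_n \cup^{h}_{\bigvee (K_i \vee K_i)} \bigvee \operatorname{Cyl}(K_i),
\end{displaymath}
where each summand identifies $f$ with $g$ via a cylinder on $K_i$. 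The Mayer--Vietoris axiom (HC iv) produces some $u_{n+1} \in F(Y_{n+1})$ restricting to $u_n$; the element on each cylinder is pinned down up to equivalence by the identity $T_{u_n}(f) = T_{u_n}(g)$ combined with (HC i). Iterate, and set $Y = \hocolim_n Y_n$, obtaining $u \in F(Y)$ from the telescope presentation via (HC iii) and (HC iv).

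Bijectivity of $T_u$ on a generator $K_i$ follows from compactness: any map $K_i \to Y$ factors through some $Y_n$ up to homotopy, giving surjectivity from that of $T_{u_0}$, and any two maps agreeing under $T_u$ factor simultaneously through some $Y_n$, hence become homotopic in $Y_{n+1}$ by construction. The main obstacle, and where the full strength of the hypotheses is used, is the final passage from bijectivity on the generators to bijectivity on a general $X$. For this I would verify that the representable functor $\Hom_{\Ho(\category{C})}(-, Y)$ itself satisfies (HC i)--(HC iv), the Mayer--Vietoris surjectivity of $\Hom(-,Y)$ being the one step that requires left properness (to ensure that a homotopy pushout square gives a genuine Mather/Puppe style lifting of homotopy classes). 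Then $T_u$ is a natural transformation between two homotopy-continuous functors that is bijective on every $K_i$, and the compact generation criterion of \prettyref{def:cptjardine}, together with a small-object approximation expressing an arbitrary $X$ as a suitable homotopy colimit built out of the $K_i$, forces $T_u$ to be a natural isomorphism on all of $\category{C}$. Naturality of $T_u$ is automatic from the formula $f \mapsto f^\ast u$.
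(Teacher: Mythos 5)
First, a remark on the comparison: the paper does not actually prove this statement --- it is quoted from \cite[Theorem 19]{jardine:2006:brown} --- so your proposal has to be measured against the standard Brown-type argument that Jardine's proof follows. Your construction of the candidate pair $(Y,u)$ is essentially that argument's first half and is sound: the wedge over all pairs $(K_i,v)$ forces surjectivity of $T_{u_0}$ on generators, the cylinder attachments governed by (HC iv) and (HC i) force injectivity stage by stage, the telescope and compactness of the $K_i$ give bijectivity of $T_u$ on every generator, and naturality is automatic.

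The gap is in the final passage from bijectivity on the generators to bijectivity on an arbitrary $X$. Two things go wrong with the route you sketch. First, \prettyref{def:cptjardine} only says that the $K_i$ \emph{detect} weak equivalences; it does not provide a presentation of an arbitrary $X$ as a homotopy colimit built from the $K_i$, so the ``small-object approximation'' you invoke is not among the hypotheses. Second, even if such a presentation existed, (HC iv) is only a \emph{surjectivity} statement, so a natural transformation of homotopy-continuous functors that is bijective on the pieces of a homotopy pushout need not be bijective on the pushout; an induction over cells does not close up. The correct final step is the classical extension lemma: given a cofibration $A\rightarrow X$, an element $x\in F(X)$, and a map $f:A\rightarrow Y$ with $f^\ast u$ equal to the restriction of $x$, form the homotopy pushout $Z=Y\cup_A^h X$, use (HC iv) to find $z\in F(Z)$ restricting to $u$ and to $x$, enlarge $(Z,z)$ to a universal pair $(Z',z')$ by repeating your construction, and observe that $Y\rightarrow Z'$ induces bijections on $[K_i,-]$ because both pairs are universal over $F(K_i)$; hence it is a weak equivalence by \prettyref{def:cptjardine}, and inverting it in $\Ho(\category{C})$ yields the required map $X\rightarrow Y$. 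Surjectivity of $T_u$ on $X$ is the case $A=\ast$, and injectivity follows by applying the lemma to the inclusion of the two ends of a cylinder on $X$. This is the one place where the detection property of compact generation is genuinely used, and it is exactly the step your proposal replaces with an unavailable colimit decomposition.
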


\begin{corollary}
\label{cor:brownmor}
Let $\category{C}$ be any left proper, compactly generated model
category, let $F, G:\category{C}^{op}\rightarrow \set_\ast$ be
homotopy-continuous functors with classifying spaces $Y_F$ resp. $Y_G$
and universal elements $u_F$ resp. $u_G$. For any natural transformation
$T:F\rightarrow G$ there exists a morphism $f:Y_F\rightarrow Y_G$,
unique up to homotopy, such that the following diagram commutes for
all $X\in \category{C}$:
\begin{center}
  \begin{minipage}[c]{10cm}
    \xymatrix{
      \Hom_{\operatorname{Ho}(\category{C})}(X, Y_F) \ar[r]^{f_\ast}
      \ar[d]_{T_{u_F}(X)} & 
      \Hom_{\operatorname{Ho}(\category{C})}(X, Y_G)
      \ar[d]^{T_{u_G}(X)} \\
      F(X) \ar[r]_{T(X)} & G(X)
    }
  \end{minipage}
\end{center}
\end{corollary}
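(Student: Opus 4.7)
The plan is to invoke Theorem~\ref{thm:brown2} for both $F$ and $G$ and then promote the element $T(Y_F)(u_F)\in G(Y_F)$ into a homotopy class of maps $Y_F\to Y_G$ using the representability of $G$. Concretely, set $v:=T(Y_F)(u_F)\in G(Y_F)$. Since $T_{u_G}(Y_F)\colon \Hom_{\operatorname{Ho}(\category{C})}(Y_F,Y_G)\to G(Y_F)$ sends $[h]\mapsto h^\ast(u_G)$ and is a bijection, there exists a unique homotopy class $[f]$ of maps $Y_F\to Y_G$ with $f^\ast(u_G)=v$. This $f$ is the candidate morphism.

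Next, I would verify commutativity of the displayed diagram by a direct chase of an arbitrary class $[g]\in\Hom_{\operatorname{Ho}(\category{C})}(X,Y_F)$. Along the top-then-right path, $f_\ast([g])=[f\circ g]$, which $T_{u_G}(X)$ sends to $(f\circ g)^\ast(u_G)=g^\ast(f^\ast(u_G))=g^\ast(v)$, by functoriality of pullback. Along the left-then-bottom path, $T_{u_F}(X)([g])=g^\ast(u_F)\in F(X)$, and then $T(X)$ sends this to $T(X)(g^\ast(u_F))$. Naturality of $T$ applied to $g\colon X\to Y_F$ provides the identity $T(X)\circ F(g)=G(g)\circ T(Y_F)$ as maps $F(Y_F)\to G(X)$; evaluating at $u_F$ yields $T(X)(g^\ast(u_F))=g^\ast(T(Y_F)(u_F))=g^\ast(v)$. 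Thus both paths produce $g^\ast(v)$, so the square commutes.

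For uniqueness, suppose $f'\colon Y_F\to Y_G$ also makes the diagram commute. Evaluating commutativity at $X=Y_F$ on the class $[\id_{Y_F}]$ forces $T_{u_G}(Y_F)([f'])=T(Y_F)(T_{u_F}(Y_F)([\id_{Y_F}]))=T(Y_F)(u_F)=v$. Since $T_{u_G}(Y_F)$ is a bijection, we conclude $[f']=[f]$ in $\operatorname{Ho}(\category{C})$.

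I do not anticipate any serious obstacle here: the argument is essentially the Yoneda lemma combined with Brown representability. The only points requiring care are that $T_{u_G}$ is a natural isomorphism, so that pullback along $g$ commutes with the representation, and that $T$ is itself a natural transformation, so that it commutes with $g^\ast$; both are automatic from the hypotheses. In this sense the corollary is a purely formal consequence of Theorem~\ref{thm:brown2}.
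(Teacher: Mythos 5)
Your proposal is correct and follows essentially the same route as the paper: both define $f$ by applying the bijection $T_{u_G}(Y_F)$ to the element $T(Y_F)(u_F)=T(Y_F)\bigl(T_{u_F}(Y_F)(\id)\bigr)\in G(Y_F)$ and deduce uniqueness from that same bijection. Your write-up merely spells out the Yoneda-style diagram chase for commutativity, which the paper leaves implicit.
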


\begin{proof}
We set $X=Y_F$. Then $T(Y_F)\circ T_{u_F}(Y_F)(\id)$ yields an element 
of $G(X)$. By representability, we have that $T_{u_G}(Y_F)$ is an
isomorphism and hence the element above is of the form
$T_{u_G}(Y_F)(f)$ for a morphism $Y_F\rightarrow Y_G$, which is unique
up to homotopy.
\end{proof}




\subsection{Proof of Homotopy-Continuity}
In this paragraph we will prove that the functor $\fibfun{-}$ from
\prettyref{def:fibfun} is homotopy-continuous. Applying
the Brown representability theorem discussed above, we get classifying
spaces for fibre sequences and universal fibrations.  

First note that $\fibfun{\ast}$ is the singleton set consisting of the
fibre sequence $F\rightarrow F\rightarrow \ast$, settling (HC ii). 

The next serious thing to do is to show (HC i), i.e. that the functor
$\fibfun{-}$ is homotopically meaningful, in the sense that it carries
weak equivalences between simplicial sheaves to isomorphisms of
(pointed) sets. This implies in particular that there is a right
derived functor
$\mathbf{R}\fibfun{-}:\operatorname{Ho}\category{C}^{op}\rightarrow
\category{S}et_\ast$.     

\begin{proposition}
\label{prop:fibfunhoinv}
The functor $\fibfun{-}$ sends weak equivalences of simplicial
sheaves to bijections of pointed sets. 
\end{proposition}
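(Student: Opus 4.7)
The plan is to use right properness of the injective model structure (\prettyref{thm:jardine}) and the factorization axiom to transport fibre sequences along the weak equivalence $f\colon X\to Y$. I would first factor $f$ as a composite $X\xrightarrow{i}Z\xrightarrow{p}Y$ with $i$ a trivial cofibration and $p$ a trivial fibration. Since pullback is functorial, $f^{*}=i^{*}\circ p^{*}$, reducing to the two separate cases.

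The trivial fibration case follows from the existence of a strict section. Since cofibrations in the injective model structure are monomorphisms, every simplicial sheaf is cofibrant, and hence $p$ admits a section $s\colon Y\to Z$ with $ps=\id_{Y}$. By functoriality of pullback, $s^{*}p^{*}=\id$ on $\fibfun{Y}$. The composite $sp\colon Z\to Z$ agrees with $\id_{Z}$ in the homotopy category, because a section of a weak equivalence is its homotopy inverse; since the pullback on $\fibfun{-}$ depends only on the homotopy class of the map by \prettyref{prop:help1}(iii), one gets $p^{*}s^{*}=(sp)^{*}=\id$, so $s^{*}$ is a two-sided inverse of $p^{*}$.

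The trivial cofibration case calls for constructing a pushforward $i_{!}\colon \fibfun{X}\to\fibfun{Z}$ by extension. Given $[F\to E\to X]$, represented by an honest fibration $E\to X$ by \prettyref{prop:help1}(ii), factor the composite $E\to X\xrightarrow{i} Z$ as a trivial cofibration $E\xrightarrow{\simeq} E'$ followed by a fibration $E'\to Z$, and set $i_{!}[E]=[E']$. Well-definedness on equivalence classes follows from the same lifting and 2-out-of-3 argument used in the proof of \prettyref{prop:help1}(ii). The identity $i^{*}i_{!}=\id$ is a consequence of right properness applied to the square with $E\xrightarrow{\simeq}E'$ on top, $i$ on the bottom, and the fibration $E'\to Z$ on the right: the comparison map $E\to E'\times_{Z}X$ is a weak equivalence. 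For $i_{!}i^{*}=\id$, the pullback projection $i^{*}E'\to E'$ gives a second factorization of $i^{*}E'\to Z$ through the fibration $E'\to Z$; lifting the trivial cofibration defining $i_{!}(i^{*}E')$ against $E'\to Z$ produces a comparison map over $Z$, which is a weak equivalence by 2-out-of-3.

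The main obstacle is to verify that these constructions respect the extra structure required for membership in $\fibfun{-}$. Equivariance of the $\Omega B$-action under the induced weak equivalences of fibres is automatic from \prettyref{prop:help1}(i). The genuinely delicate point is \emph{local triviality}: the extension $E'\to Z$ must be locally trivial with fibre $F$, even though an arbitrary map $U\times\Delta^{n}\to Z$ need not factor through the trivial cofibration $i$. I would address this by combining the detection of homotopy pullbacks on points (\prettyref{prop:pullbackloc}) with the weak equivalence $X\simeq Z$, which forces the stalks of $E'\to Z$ to agree up to homotopy with those of $E\to X$; alternatively, the canonical decomposition of \prettyref{lem:decompfib} can be invoked to reduce to local data where the hypothesis is available.
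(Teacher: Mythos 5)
Your trivial-cofibration leg is, in substance, the paper's own proof: factoring $E\to X\xrightarrow{i}Z$ into a trivial cofibration followed by a fibration $E'\to Z$ and using right properness plus 2-out-of-3 to identify $E$ with $i^{*}E'$ is exactly how the paper proves surjectivity of $f^{*}$ (and it does so for an arbitrary weak equivalence $f$, which makes your preliminary factorization of $f$ unnecessary); your lifting argument for $i_{!}i^{*}=\id$ is the argument of \prettyref{prop:help1}(ii). The genuine gap is in the trivial-fibration leg. \prettyref{prop:help1}(iii) does \emph{not} assert that $f^{*}$ on $\fibfun{-}$ depends only on the homotopy class of $f$; it asserts that the left column of a homotopy pullback square represents the pulled-back class. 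To extract $(sp)^{*}=\id$ from it you would need, for each fibration $q\colon E\to Z$, a homotopy pullback square over $sp$ whose left column is $q$ itself, and producing the requisite self-map of $E$ covering $sp$ requires the covering homotopy property --- i.e.\ precisely the homotopy invariance of pullback you are implicitly assuming. Moreover, the standard derivation of that invariance (comparing the two ends $i_{0},i_{1}\colon Z\to Z\times\Delta^{1}$ of a cylinder) is itself an instance of the statement being proved, applied to the trivial cofibrations $i_{0},i_{1}$, so the step as written risks circularity. The repair stays entirely inside your own framework: $s$ is a monomorphism and a weak equivalence, hence a trivial cofibration, so your first leg already shows $s^{*}$ is a bijection, and $s^{*}p^{*}=(ps)^{*}=\id$ then forces $p^{*}=(s^{*})^{-1}$.

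On local triviality: you are right that the pushforward $E'\to Z$ must be checked to remain locally trivial, and this is a point the paper's own proof of this proposition passes over in silence. But your proposed fix does not match the definition: local triviality is a condition about coverings $\bigsqcup U_i\to U$ of objects $U$ of the site trivializing $E'\times_{Z}(U\times\Delta^{n})$, not a stalkwise condition, so detecting homotopy pullbacks on points (\prettyref{prop:pullbackloc}) cannot by itself produce the required trivializations over objects $U\times\Delta^{n}\to Z$ that do not factor through $X$. An honest treatment would argue via \prettyref{lem:decompfib} and homotopy distributivity, as in the set-theoretic part of \prettyref{prop:fibfunset}; as it stands this paragraph of your proposal is a correct diagnosis with an incorrect prescription.
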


\begin{proof}
Let $f:X\rightarrow Y$ be a weak equivalence, and consider
$f^\ast:\fibfun{Y}\rightarrow \fibfun{X}$. 

To show $f^\ast$ is surjective, let $F\rightarrow
E\stackrel{p}{\rightarrow} X$ be a 
fibre sequence with $p$ a fibration. Consider the following diagram: 
\begin{center}
  \begin{minipage}[c]{10cm}
    \xymatrix{
      E \ar[r]_\simeq^i \ar[d]_p & \tilde{E} \ar[d]^{\tilde{p}} \\
      X \ar[r]_f^\simeq & Y.
    }
  \end{minipage}
\end{center}
Therein, $\tilde{E}$ is obtained by factoring $f\circ p$ into a trivial
cofibration $i$ and a fibration $\tilde{p}$. 
Since both $i$ and $f$ are weak equivalences, this square is a
homotopy pullback. Hence by applying \prettyref{prop:help1} we
get $f^\ast \tilde{p}=p$. 

To see that $f^\ast$ is injective, let $p_1:F\rightarrow
E_1\rightarrow Y$ and $p_2:F\rightarrow E_2\rightarrow Y$ be 
fibre sequences whose pullbacks are equivalent, i.e. $f^\ast
p_1=f^\ast p_2\in \fibfun{X}$. We assume that $p_1$ and $p_2$ are
actually fibrations. By properness and the fact that $f$ is a weak
equivalence, we obtain weak equivalences $E_i\times_Y X\simeq
f^\ast E_i\rightarrow E_i$. Therefore, the fibre sequences
$F\rightarrow f^\ast E_i\rightarrow X$ and $F\rightarrow
E_i\rightarrow Y$ are isomorphic in the homotopy category. Since we
also assumed that $f^\ast p_1=f^\ast p_2$, we have isomorphisms of
fibre sequences in the homotopy category, which are also equivariant:
\begin{displaymath}
E_1\xrightarrow{\phantom{X}\simeq\phantom{X}} 
f^\ast E_1 \xrightarrow{\phantom{X}\simeq\phantom{X}} f^\ast E_2
\xleftarrow{\phantom{X}\simeq\phantom{X}}  E_2.
\end{displaymath} 
Thus $p_1$ and $p_2$ are equivalent fibre sequences. 
\end{proof}

The following propositions will prove the two main parts of
homotopy-continuity of the functor $\fibfun{-}$, namely the
Mayer-Vietoris and the wedge property. 
This is the point where we make essential use of the homotopy
distributivity. This remarkable property 
allows to glue together fibrations defined on a covering of the
base. The outcome will not be a fibration, but we still can determine
the homotopy fibre, and therefore by \prettyref{prop:help1}, we know
what the associated fibre sequence looks like. This is also the key
argument in the work of Allaud \cite{allaud:1966:fibre}, although
there is a lot more to do if one wants to work with homotopy
equivalences of CW-complexes.  

\begin{proposition}[Mayer-Vietoris Axiom]
\label{prop:fibfunmv}
Let $B_1 \stackrel{\iota_1}{\longleftarrow}
B_0\stackrel{\iota_2}{\longrightarrow} B_2$ be a diagram of simplicial 
sheaves. We assume without loss of generality that
$\iota_1:B_0\hookrightarrow B_1$ is in 
fact a cofibration, so that the 
homotopy pushout is given by the point-set pushout:
$B:=B_1\cup^h_{B_0} B_2=B_1\cup_{B_0} B_2$.   

Then the induced morphism
\begin{displaymath}
\fibfun{B_1\cup_{B_0} B_2} \twoheadrightarrow
\fibfun{B_1}\times_{\fibfun{B_0}} \fibfun{B_2} 
\end{displaymath}
is surjective.
\end{proposition}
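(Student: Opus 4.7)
The plan is to glue the given fibre sequences into a fibre sequence over $B$ via a homotopy pushout, and to use homotopy distributivity to verify that the result has the required properties. Suppose we are given $[p_1] \in \fibfun{B_1}$ and $[p_2] \in \fibfun{B_2}$ whose pullbacks agree in $\fibfun{B_0}$. Represent each by a fibration $p_j : E_j \to B_j$ of fibrant objects with fibre $F$, and choose a common model: set $E_0 := \iota_1^\ast E_1$ (a strict fibration over $B_0$ with fibre $F$), and use the equivalence in $\fibfun{B_0}$ together with \prettyref{prop:help1}(iii) to produce a weak equivalence $E_0 \simeq \iota_2^\ast E_2$ which, after a standard cofibrant replacement of the diagram, can be rectified to a morphism of pushout-shaped diagrams
\[
(E_1 \leftarrow E_0 \rightarrow E_2) \longrightarrow (B_1 \leftarrow B_0 \rightarrow B_2)
\]
in which each square is strictly commutative and is a homotopy pullback. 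Define $E := E_1 \cup^h_{E_0} E_2$ and let $p : E \to B$ be the induced map; since $\iota_1$ is a cofibration the target is the strict pushout $B_1 \cup_{B_0} B_2 = B$.

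The key step is to invoke (HD ii) from \prettyref{prop:hodistrib}: the target diagram is a homotopy colimit diagram by assumption, the source diagram is a homotopy colimit diagram by construction, and the squares of type (\ref{eq:distrib2}) are homotopy pullbacks by the preceding step. The conclusion is that each square of type (\ref{eq:distrib1}) is a homotopy pullback, i.e.\ for $j = 0, 1, 2$ the square
\[
\xymatrix{E_j \ar[r] \ar[d] & E \ar[d]^p \\ B_j \ar[r] & B}
\]
is a homotopy pullback. By \prettyref{prop:help1}(iii) this yields $\iota_j^\ast [p] = [p_j]$ in $\fibfun{B_j}$, so $[p]$ lifts the prescribed element of the fibred product. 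Moreover, since the basepoint of $B$ lies in $B_0$, the homotopy fibre of $p$ at the basepoint is identified via the above homotopy pullback with $\hofib p_j = F$ for any $j$, and the $\Omega B$-actions match by \prettyref{prop:help1}(i) applied along $E_j \to E$ over $B_j \to B$, giving a well-defined fibre sequence $F \to E \to B$.

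The remaining verification, and the step I expect to be the main obstacle, is to show that $p : E \to B$ is locally trivial with fibre $F$. Given a test morphism $U \times \Delta^n \to B$ with $U \in T$, set $V := U \times \Delta^n$ and $V_j := V \times_B B_j$; applying (HD i) to the pullback of the pushout diagram along $V \to B$ shows $V \simeq V_1 \cup^h_{V_0} V_2$ and, correspondingly, $E \times_B V \simeq (E_1 \times_{B_1} V_1) \cup^h_{E_0 \times_{B_0} V_0} (E_2 \times_{B_2} V_2)$. The difficulty is that $V_j$ need not be of the form $U' \times \Delta^{n'}$, so one cannot feed it directly into the local triviality hypothesis for $p_j$; instead one must refine $U$ by a covering $\{U_i \to U\}$ which simultaneously trivializes each $E_j$ on the pulled-back pieces and, locally on each $U_i$, factors the map $U_i \times \Delta^n \to B$ through either $B_1$ or $B_2$ (exploiting that pushouts of sheaves are sheafifications, hence stalk-locally resolve the seam $B_0$ into one side or the other). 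Once such a simultaneous refinement is found, a final application of homotopy distributivity identifies $E \times_B (U_i \times \Delta^n)$ with $F \times (U_i \times \Delta^n)$, completing the proof.
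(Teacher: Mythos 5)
Your proof is correct and follows essentially the same route as the paper: glue the total spaces by a homotopy pushout and apply homotopy distributivity --- your direct appeal to (HD~ii) is exactly what the paper packages as the second half of Mather's cube theorem, \prettyref{cor:mather} --- to see that the glued map pulls back to the given fibre sequences over each $B_j$. Your treatment of local triviality is if anything more explicit than the paper's one-line reduction via the canonical homotopy colimit decomposition; the local factorization of $U_i\times\Delta^n\rightarrow B$ through $B_1$ or $B_2$ (using that the pushout of sheaves is a sheafification) is a sound way to make that step precise.
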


\begin{proof}
What we have to show is the following: assume given two fibre
sequences $F\rightarrow E_1\rightarrow B_1$ and $F\rightarrow
E_2\rightarrow B_2$, such that the corresponding pullbacks
$\iota_1^\ast E_1$ and $\iota_2^\ast E_2$ are isomorphic fibre
sequences via a given isomorphism 
$\rho:\iota_1^\ast E_1 \stackrel{\cong}{\longrightarrow}\iota_2^\ast
E_2$. Then we have to show that there is a fibre sequence over $B$
inducing them compatibly. 

We will apply Mather's cube theorem from 
\prettyref{cor:mather}. Since we know that the pullbacks of the fibre
sequences are equivalent, the squares in the diagram are homotopy
pullback squares:
\begin{center}
  \begin{minipage}[c]{10cm}
    \xymatrix{
      E_1 \ar[d]_{p_1} & \iota_1^\ast E_1\cong \iota_2^\ast E_2 \ar[l]
      \ar[r] \ar[d] & E_2 \ar[d]^{p_2} \\
      B_1 & B_0 \ar[l]^{\iota_1} \ar[r]_{\iota_2} & B_2.
    }
  \end{minipage}
\end{center}
The homotopy fibres of the vertical arrows are all weakly equivalent
to $F$. Then Mather's cube theorem produces the following fibre
sequence:
\begin{displaymath}
F\rightarrow E_1\cup^h_{E_0} E_2 \stackrel{p}{\longrightarrow} B\cong
B_1\cup^h_{B_0} B_2. 
\end{displaymath}
Actually, we only get the morphism $p$, and know that its
homotopy fibre is $F$. Then we still have to do a
fibrant replacement to really get a fibre
sequence with total space $E:=E_1\cup^h_{E_0}E_2\in\fibfun{B}$. 

What is left to show is that pulling back $E$ to $B_i$ yields
equivalences $\phi:E_1\stackrel{\cong}{\longrightarrow} E$ and
$\psi:E_2\stackrel{\cong}{\longrightarrow} E$ such that over $B_0$ we have
$\phi=\psi\circ \rho$. This also follows from homotopy distributivity:
We assume $p$ has been rectified to a fibration. 
Since the squares 
\begin{center}
  \begin{minipage}[c]{10cm}
    \xymatrix{
      E_i \ar[r] \ar[d]^{p_i} & E \ar[d]^p \\
      B_i \ar[r] & B
    }
  \end{minipage}
\end{center}
are homotopy pullback squares, the map $E_i\rightarrow E$ factors
through a unique weak equivalence from $E_i$ to the point-set pullback
of $E$ along $B_i\rightarrow B$. This provides the equivalences $\phi$
and $\psi$. All we need to show is that the following diagram is
commutative: 
\begin{center}
  \begin{minipage}[c]{10cm}
    \xymatrix{
      &\iota_1^\ast E_1 \ar[d]^\rho
      \ar[dl] \ar[dr] \\
      B_0 &  \iota_2^\ast E_2 \ar[l] \ar[r] \ar[d]^\psi & E \\
      & E\times_BB_0 \ar[ul] \ar[ur]
    }
  \end{minipage}
\end{center}
By the universal property of the pullback, this implies that
$\psi\circ\rho=\phi$, since the morphism $\iota_1^\ast E_1\rightarrow
E\times_BB_0$ is by definition $\phi$.

The upper left triangle commutes, since $\rho$ was defined over $B_0$.
The lower triangles commute because $\psi$ was defined using the
universal property of the pullback $E\times_B B_0$. The upper right
triangle commutes because $E$ was defined as the glueing of
$\iota_1^\ast E_1$ and $\iota_2^\ast E_2$ along $\rho$. Therefore, we 
get that $\phi=\psi\circ \rho$. In particular, the image of
$E\in\fibfun{B}$ in $\fibfun{B_1}\times_{\fibfun{B_0}}\fibfun{B_2}$ is
exactly the class of $(E_1,E_2,\rho)$ we started with.

Finally, note that the result of glueing two locally trivial fibre
sequences in a homotopy colimit produces again a locally trivial fibre
sequence. The canonical homotopy colimit decomposition reduces this
assertion to the case of homotopy colimits of simplicial dimension
zero representable sheaves, where it is obvious.
\end{proof}



\begin{proposition}[Wedge Axiom]
\label{prop:fibfunwedge}
Let $B_\alpha$ with $\alpha\in I$ be a set of pointed simplicial
sheaves. Then  
\begin{displaymath}
\fibfun{\bigvee_\alpha B_\alpha}\rightarrow
\prod_\alpha\fibfun{B_\alpha}: E\mapsto \iota_\alpha^\ast(E) 
\end{displaymath}
is a bijection, where $\iota_\alpha:B_\alpha\hookrightarrow
\bigvee_\alpha B_\alpha$ denotes the inclusion. In particular, the
collection $\fibfun{\bigvee_\alpha B_\alpha}$ is a set if
$\fibfun{B_\alpha}$ is a set for every $\alpha\in I$.
\end{proposition}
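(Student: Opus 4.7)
The plan is to construct an inverse to the restriction map by taking wedges of fibre sequences. Realizing $\bigvee_\alpha B_\alpha$ as the homotopy pushout of $\bigsqcup_\alpha B_\alpha \leftarrow \bigsqcup_\alpha \ast \to \ast$ (which we may arrange by replacing each basepoint inclusion by a cofibration), the main tool will be Mather's cube theorem (\prettyref{cor:mather}), which controls how fibre sequences pass through such a colimit.

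For surjectivity, given a collection $(F \to E_\alpha \to B_\alpha)_\alpha$, first choose basepoints of $E_\alpha$ lying in the fibre $F$ over the basepoint of $B_\alpha$. Together with the trivial fibre sequences $F \to F \to \ast$ and $\bigsqcup_\alpha F \to \bigsqcup_\alpha F \to \bigsqcup_\alpha \ast$, these assemble into a cube whose bottom face is the pushout square defining $\bigvee_\alpha B_\alpha$ and whose four vertical faces are homotopy pullbacks with common homotopy fibre $F$. Mather's cube theorem then exhibits the top face as a homotopy pushout and produces a morphism $\bigvee_\alpha E_\alpha \to \bigvee_\alpha B_\alpha$ with homotopy fibre $F$, whose pullback along $\iota_\alpha$ recovers $E_\alpha$ by the pullback lemma. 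Local triviality transfers from the summands to the wedge via the homotopy pushout description, much as in the final paragraph of the proof of \prettyref{prop:fibfunmv}.

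For injectivity, I apply the same cube reasoning in reverse: any locally trivial fibre sequence $p \colon E \to \bigvee_\alpha B_\alpha$ is canonically equivalent to the wedge $\bigvee_\alpha \iota_\alpha^\ast E$ of its restrictions, since after rectifying $p$ to a fibration, $E$ arises as the homotopy pushout of $\bigsqcup_\alpha \iota_\alpha^\ast E \leftarrow \bigsqcup_\alpha F \to F$. Thus given $E, E'$ over $\bigvee_\alpha B_\alpha$ with $\iota_\alpha^\ast E \simeq \iota_\alpha^\ast E'$ for every $\alpha$, it suffices to glue the componentwise equivalences into a single equivalence $\bigvee_\alpha \iota_\alpha^\ast E \simeq \bigvee_\alpha \iota_\alpha^\ast E'$ in order to conclude $E \simeq E'$.

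The main obstacle in this last step is compatibility at the shared basepoint: each equivalence $\iota_\alpha^\ast E \to \iota_\alpha^\ast E'$ restricts to some self-equivalence of $F$, and these restrictions must be made to agree before the wedge can be formed. Using the $\Omega B$-equivariance built into the notion of morphism of fibre sequences (\prettyref{def:fibfun}) together with the independence properties in \prettyref{prop:help1}, one can precompose each equivalence with a suitable self-equivalence of $\iota_\alpha^\ast E'$ to normalize its restriction to $F$ to the identity. After this normalization, the equivalences assemble into the required equivalence of wedges, which then completes the proof.
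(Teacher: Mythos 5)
Your argument is essentially the paper's: both realize $\bigvee_\alpha B_\alpha$ as a homotopy pushout and invoke homotopy distributivity (of which Mather's cube theorem is the relevant corollary here) to glue the $E_\alpha$ into a sequence over the wedge with homotopy fibre $F$ and with the correct restrictions, and both prove injectivity by showing that any $E$ over the wedge is recovered from its restrictions and then chaining the componentwise equivalences. The one place you diverge, the final normalization step, is unnecessary: the functor in question is the pointed one, $\mathcal{H}^{\operatorname{pt}}(-,F)$, so by \prettyref{def:fibfun} an equivalence in $\mathcal{H}^{\operatorname{pt}}(B_\alpha,F)$ already restricts to the identity of $F$ up to homotopy (no arbitrary self-equivalences of $F$ arise, and postcomposition with a self-equivalence of $\iota_\alpha^\ast E'$ covering $\id_{B_\alpha}$ with prescribed restriction would in any case not obviously exist); all that remains is the routine strictification of homotopy-commutative squares, which the paper also elides.
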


\begin{proof}
Define $E$ via the following homotopy pushout, where the maps
$F\rightarrow E_\alpha$ are the ones from the definition of fibre
sequence: 
\begin{center}
  \begin{minipage}[c]{10cm}
    \xymatrix{
      \bigvee_\alpha F \ar[r] \ar[d] & \bigvee_\alpha E_\alpha \ar[d] \\
      F \ar[r] & E
    }
  \end{minipage}
\end{center}
By homotopy distributivity we find 
that the following squares are homotopy pullbacks for all $\alpha$:
\begin{center}
  \begin{minipage}[c]{10cm}
    \xymatrix{
      E_\alpha \ar[r] \ar[d] & E \ar[d] \\
      B_\alpha \ar[r] & \bigvee_\alpha B_\alpha.
    }
  \end{minipage}
\end{center}
Therefore, the homotopy fibre of $\bigvee_\alpha p_\alpha$ is also
$F$. Rectifying it to a fibre sequence, we get $\bigvee_\alpha
E_\alpha\in\fibfun{\bigvee_\alpha B_\alpha}$. This proves
surjectivity. 

Now assume given two fibre sequences $E_1$ and $E_2$ over
$\bigvee_\alpha B_\alpha$. We first prove the weak equivalence
$\bigvee_\alpha \iota_\alpha^\ast E\simeq E$ for any fibre sequence
$E\rightarrow \bigvee_\alpha B_\alpha$. This follows from
distributivity in categories of simplicial sheaves, since $E$ is
isomorphic to the colimit of $\iota_\alpha^\ast E_\alpha$. Then one
can either use that the wedge is already the homotopy direct product,
or again appeal to the homotopy distributivity. 
Then we have the sequence of weak equivalences:
\begin{displaymath}
E_1 \mathrel{\simeq} \bigvee_\alpha \iota_\alpha^\ast E_1
\mathrel{\simeq} \bigvee_\alpha 
\iota_\alpha^\ast E_2\mathrel{\simeq} E_2.
\end{displaymath}
The middle weak equivalence follows from the fact \cite[Lemma 
  13.(3)]{jardine:1996:boolean} that for a set-indexed collection of
weak equivalences $f_\alpha:B_\alpha\rightarrow Y_\alpha$, the morphism
$\bigvee_\alpha f_\alpha$ is also a weak equivalence and the
assumption that $\prod \iota_\alpha^\ast E_1=\prod\iota_\alpha^\ast
E_2$ in $\prod_\alpha\fibfun{B_\alpha}$. 

The set theory statement is then clear, since a set-indexed product of
sets is again a set. 
\end{proof}

\newcommand{\classify}[1]{\ensuremath{B^f{#1}}}

\begin{theorem}
\label{thm:classify}
Assume  the model category $\simplicial{\topos{T}}$ is
compactly generated. The functor $\fibfun{-}$ which associates to each
simplicial sheaf $X$ the set of fibre sequences over $X$ with fibre
$F$ is homotopy continuous and therefore representable by a space
$\classify{F}$. The space $\classify{F}$ is unique up to weak equivalence. 

The universal element $u_F\in\fibfun{\classify{F}}$ corresponds to the
universal fibre sequence of simplicial sheaves with fibre $F$: 
\begin{displaymath}
F\rightarrow E^fF\rightarrow \classify{F}
\end{displaymath}
\end{theorem}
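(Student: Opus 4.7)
The plan is to assemble the homotopy-continuity of $\fibfun{-}$ from the preceding propositions and then feed the result into Jardine's Brown representability theorem, \prettyref{thm:brown2}. Since all four axioms (HC i)--(HC iv) have essentially been verified piecewise already, the main work is an organizational one: check that these pieces really constitute the axioms as stated, and then extract the universal fibre sequence from the resulting universal element.

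First I would verify (HC i), (HC ii), (HC iii) and (HC iv) in turn. Axiom (HC i) is exactly the content of \prettyref{prop:fibfunhoinv}. Axiom (HC ii) is immediate: over the point $\ast$ any locally trivial fibre sequence with fibre $F$ is equivalent to $F\to F\to \ast$, so $\fibfun{\ast}$ is the singleton pointed set. Axiom (HC iii), the wedge axiom, is \prettyref{prop:fibfunwedge}. Axiom (HC iv), the Mayer--Vietoris axiom (surjectivity onto the fibre product), is \prettyref{prop:fibfunmv}. Set-valuedness of the functor is \prettyref{prop:fibfunset}, and functoriality under pullback (well-definedness on equivalence classes) was established in \prettyref{def:pbfibfun} together with \prettyref{prop:help1}(iii). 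By assumption $\simplicial{\topos{T}}$ is compactly generated, and by \prettyref{thm:jardine} it is pointed and left proper, so the hypotheses of Jardine's representability theorem \prettyref{thm:brown2} are all in force.

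Applying \prettyref{thm:brown2} yields an object $\classify{F}\in\simplicial{\topos{T}}_\ast$, a universal element $u_F\in\fibfun{\classify{F}}$, and a natural bijection
\begin{displaymath}
T_{u_F}:\Hom_{\operatorname{Ho}(\simplicial{\topos{T}}_\ast)}(X,\classify{F})\xrightarrow{\cong}\fibfun{X},\qquad f\mapsto f^\ast(u_F).
\end{displaymath}
The universal element $u_F$ is (the equivalence class of) a locally trivial fibre sequence $F\to E^fF\to \classify{F}$ by definition of $\fibfun{\classify{F}}$, and naturality of $T_{u_F}$ expresses exactly that every locally trivial fibre sequence over $X$ with fibre $F$ is, up to equivalence in the sense of \prettyref{def:fibfun}, the pullback of this universal one along some classifying map unique up to pointed homotopy.

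For uniqueness of $\classify{F}$ up to weak equivalence I would argue in the standard way via \prettyref{cor:brownmor}: if $(\classify{F}',u_F')$ is a second representing pair, apply \prettyref{cor:brownmor} to the identity natural transformation of $\fibfun{-}$ in both directions. This produces pointed maps $\classify{F}\to\classify{F}'$ and $\classify{F}'\to\classify{F}$ whose composites, by the uniqueness clause of \prettyref{cor:brownmor}, are homotopic to the respective identities; both maps are therefore weak equivalences. The only place where genuine obstacles might appear is the matching of (HC iv) with the surjectivity statement of \prettyref{prop:fibfunmv} (as opposed to a bijectivity statement, which does not hold for fibre sequences because glueing data involves a choice of isomorphism over the overlap); Jardine's formulation in \prettyref{thm:brown2} only requires surjectivity, so this is precisely what is needed and no more, and I expect no difficulty here.
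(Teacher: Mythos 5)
Your proposal is correct and follows exactly the route the paper takes: the theorem is proved by assembling (HC i)--(HC iv) from \prettyref{prop:fibfunhoinv}, the observation that $\fibfun{\ast}$ is a singleton, \prettyref{prop:fibfunwedge} and \prettyref{prop:fibfunmv}, together with set-valuedness from \prettyref{prop:fibfunset}, and then invoking \prettyref{thm:brown2}. Your added uniqueness argument via \prettyref{cor:brownmor} and your remark that only surjectivity is needed in the Mayer--Vietoris axiom are both accurate and consistent with the paper.
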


\begin{remark}
\begin{enumerate}[(i)]
\item
We use the notation $\classify{F}$ to distinguish from other possible
classifying spaces we will discuss later on. 
\item There should be a simplicial functor associating to each space
  $B$ the nerve of the category of fibre sequences over this
  space -- one has to circumvent the obvious set-theoretical
  difficulty in the construction. It seems likely that the other Brown
  representability theorem \cite[Theorem 12]{jardine:2006:brown} can
  then be applied to this functor. This would allow to remove the
  compact generation hypothesis in the above theorem. Anyway, the next
  section will show that classification of non-rooted fibrations is
  always possible. 
\end{enumerate}
\end{remark}

Using homotopy distributivity once again, we can construct
change-of-fibre natural transformations, which via Brown
representability for 
morphisms give rise to morphisms between the corresponding classifying
spaces. We state the result without giving the obvious proof.

\begin{proposition}
\label{prop:fibchange}
Assume the model category $\simplicial{\topos{T}}$ is compactly
generated. 
For any morphism $f:F\rightarrow F'$ of simplicial sheaves, there
is a natural transformation $\fibfun{-}\rightarrow
\mathcal{H}^{\operatorname{pt}}(-,F')$. By Brown representability this
is representable by a morphism $\classify{F}\rightarrow
\classify{F'}$.   
\end{proposition}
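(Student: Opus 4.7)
The plan is to construct a natural transformation $T_f: \fibfun{-} \to \mathcal{H}^{\operatorname{pt}}(-, F')$ of pointed-set-valued functors, and then invoke \prettyref{cor:brownmor} to obtain the claimed morphism $\classify{F} \to \classify{F'}$ as its representing map. Since $\fibfun{-}$ is representable by $\classify{F}$ and $\mathcal{H}^{\operatorname{pt}}(-, F')$ by $\classify{F'}$ (\prettyref{thm:classify}), this will finish the proof.

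First I would define $T_f$ on objects via a ``change-of-fibre'' construction. Given a fibre sequence in $\fibfun{X}$, represented by a genuine fibration $p: E \to X$ with $F = p^{-1}(\ast)$ (admissible by \prettyref{rem:rem1}), form the homotopy pushout
\begin{displaymath}
  E' := E \cup_F^h F'
\end{displaymath}
along the fibre inclusion $F \hookrightarrow E$ and the given map $f: F \to F'$, together with the induced morphism $p': E' \to X$ extending $p$ on $E$ and the basepoint map $F' \to \ast \hookrightarrow X$ on $F'$; these are compatible because $F \hookrightarrow E \xrightarrow{p} X$ factors through the basepoint.

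Next, the crucial step is to verify that $p'$ fits into a fibre sequence with fibre $F'$, thereby producing an element of $\mathcal{H}^{\operatorname{pt}}(X, F')$. I would argue this via Mather's cube theorem (\prettyref{cor:mather}), applied to the cube whose top face is the defining pushout square of $E'$ and whose bottom face is the trivial pushout expressing $X$, with vertical faces encoding the fibre structure of $p$. Homotopy distributivity (\prettyref{prop:hodistrib}) guarantees the cube theorem applies here, and after rectifying $p'$ to a fibration, one obtains a bona fide fibre sequence. Naturality of $T_f$ in $X$ then follows from the compatibility of homotopy pushouts with pullback under properness, together with \prettyref{prop:help1}; well-definedness on equivalence classes is likewise a diagram chase.

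With $T_f$ defined and natural, \prettyref{cor:brownmor} immediately supplies the morphism $\classify{F} \to \classify{F'}$, unique up to homotopy. The main obstacle is the fibre identification in the central step: although $E \cup_F^h F'$ is intuitively the ``right'' total space, verifying that its homotopy fibre over the basepoint of $X$ is genuinely $F'$ requires a careful setup that exploits both the fact that $F$ arises as the homotopy fibre of $p$ and the $\Omega X$-equivariance built into \prettyref{def:fibreseq}. This is precisely the point where homotopy distributivity, and hence the entire machinery of \prettyref{sec:hocolim}, plays an indispensable role.
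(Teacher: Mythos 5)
Your overall strategy---produce a natural transformation of pointed-set-valued functors and then quote \prettyref{cor:brownmor}---is the right frame (and is essentially all the paper commits to, since it declares the proof ``obvious'' and omits it), but the construction you propose for the transformation does not work. The homotopy pushout $E' = E\cup_F^h F'$ glues $F'$ onto $E$ only over the basepoint of $X$, and its homotopy fibre over $X$ is not $F'$. The paper's own Ganea theorem (\prettyref{prop:ganea3}) already refutes the special case $F'=\ast$: the homotopy fibre of $E\cup_F^h\ast = E/F\rightarrow X$ is the join $F\ast\Omega X$, not a point. Worse, even the basepoint of $\mathcal{H}^{\operatorname{pt}}(X,F)$, namely the trivial sequence $F\times X\rightarrow X$, is sent to something with the wrong fibre: for $X=S^1$, $F=S^0$, $F'=\ast$ your construction yields $S^1\vee S^1\rightarrow S^1$, whose homotopy fibre is an infinite wedge of circles. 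So $T_f$ is not even a morphism into $\mathcal{H}^{\operatorname{pt}}(-,F')$, let alone a pointed one. The appeal to Mather's cube theorem cannot repair this: in the cube with top face $F'\leftarrow F\rightarrow E$ and bottom face $\ast\leftarrow\ast\rightarrow X$, the vertical face $(F\rightarrow F')$ over $(\ast\rightarrow\ast)$ is a homotopy pullback only when $f$ is a weak equivalence, so the hypotheses of \prettyref{cor:mather} fail for exactly the maps $f$ the proposition is about.

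The point you cannot avoid is that a change of fibre must be performed \emph{fibrewise over all of $X$}, not just over the basepoint. The intended route (cf.\ the references to \cite[Proposition 1.43]{rudyak:1998:cobordism} and \cite{may:1980:fibrewise}) is to use local triviality together with the canonical homotopy colimit decomposition of \prettyref{lem:decompfib}: write $E\rightarrow X$ as a homotopy colimit of trivialized pieces $F\times U_i\times\Delta^n\rightarrow U_i\times\Delta^n$, apply $f\times\operatorname{id}$ on each piece, and reglue; homotopy distributivity, via (HD~ii) or Puppe's theorem (\prettyref{prop:diagbase}), then identifies the homotopy fibre of the reglued map as $F'$. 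The genuinely delicate step, which your argument never touches, is showing that the transition equivalences of the $F$-pieces can be transported along $f$ to coherent gluing data for the $F'$-pieces; none of this is captured by the single pushout $E\cup_F^hF'$.
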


Similarly, it is possible to generalize operations on fibrations,
cf. \cite[Proposition 1.43]{rudyak:1998:cobordism} or
\cite{may:1980:fibrewise}, to the simplicial sheaf setting.

\begin{corollary}
\label{cor:fibsmash}
Assume the model category $\simplicial{\topos{T}}$ is compactly
generated. 
There are morphisms of classifying spaces associated to fibrewise
smash 
\begin{displaymath}
\classify{(\wedge)}:\classify{F_1}\times\classify{F_2}\rightarrow
\classify{(F_1\wedge F_2)},
\end{displaymath}
and fibrewise suspension $\classify{F}\rightarrow\classify{\Sigma
  F}$. 
\end{corollary}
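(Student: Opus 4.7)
The strategy mirrors the proof of \prettyref{prop:fibchange}: construct a natural transformation of set-valued functors on the pointed homotopy category and invoke \prettyref{cor:brownmor} together with the Yoneda lemma to extract a map of classifying spaces. The functor $X \mapsto \mathcal{H}^{\operatorname{pt}}(X,F_1) \times \mathcal{H}^{\operatorname{pt}}(X,F_2)$ is represented by $\classify{F_1} \times \classify{F_2}$, so any natural transformation from it to $\mathcal{H}^{\operatorname{pt}}(-, F_1 \wedge F_2)$ yields a map $\classify{F_1} \times \classify{F_2} \to \classify{(F_1 \wedge F_2)}$, well-defined up to homotopy.

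To define the transformation, I would send a pair of locally trivial fibre sequences $p_i \colon E_i \to X$ with fibres $F_i$ and basepoint sections $s_i \colon X \to E_i$ to the fibrewise smash $E_1 \wedge_X E_2$, defined as the homotopy pushout
\begin{displaymath}
E_1 \wedge_X E_2 := X \cup^h_{E_1 \vee_X E_2} (E_1 \times_X E_2),
\end{displaymath}
where $E_1 \vee_X E_2 := E_1 \cup^h_X E_2$ denotes the fibrewise wedge glued via the sections $s_1, s_2$. Three things must be checked: (a) the natural map $E_1 \wedge_X E_2 \to X$ is a locally trivial fibre sequence with fibre $F_1 \wedge F_2$; (b) the construction descends to equivalence classes and is natural in $X$; (c) it assembles into a natural transformation of functors. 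The fibrewise suspension is then subsumed as the special case of smashing with the trivial fibre sequence $S^1 \times X \to X$, giving the natural transformation $\mathcal{H}^{\operatorname{pt}}(-, F) \to \mathcal{H}^{\operatorname{pt}}(-, \Sigma F)$ and hence the morphism $\classify{F} \to \classify{\Sigma F}$.

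The main obstacle is (a), specifically the identification of the fibre. This is precisely the content of homotopy distributivity \prettyref{prop:hodistrib}: the homotopy pullback to a point commutes with the homotopy pushout defining the fibrewise smash, so the fibre is computed as the analogous smash of the individual fibres, namely $F_1 \wedge F_2$. Local triviality follows by restricting to any $U \times \Delta^n \to X$ on which both $E_i$ trivialize: the fibrewise smash then reduces to the trivial sequence $(F_1 \wedge F_2) \times U \times \Delta^n \to U \times \Delta^n$. Well-definedness on equivalence classes is a cogluing lemma argument as in the proof of \prettyref{prop:fibfunset}, and naturality in $X$ follows since for any pointed map $f \colon X' \to X$ the functor $f^\ast$ preserves both the relevant fibre products and the homotopy pushout, by \prettyref{prop:pullbackloc} and \prettyref{prop:colim1}.
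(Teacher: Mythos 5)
Your overall strategy --- produce a natural transformation of the represented functors and then apply \prettyref{cor:brownmor} --- is the right one and is what the paper intends (the discussion around \prettyref{prop:fibchange} says exactly this), and homotopy distributivity is indeed the correct tool for identifying the fibre of a fibrewise construction. The problem is the very first step of your construction: you form the fibrewise wedge $E_1\vee_X E_2$ by gluing along ``basepoint sections $s_i\colon X\to E_i$'', but such sections are not part of the data of a fibre sequence in this paper. \prettyref{def:fibreseq} and \prettyref{def:fibfun} only supply a map $F\to E$ landing in the fibre over the basepoint $\ast\in X$, i.e.\ a basepoint of the single fibre $p^{-1}(\ast)$, not a section of $p$ over all of $X$; and a rooted fibration with pointed fibre need not admit any section (already $S^1\to S^3\to S^2$ has none, since a section would split $\pi_2(S^2)$ through $\pi_2(S^3)=0$). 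Without the $s_i$ the pushout $X\cup^h_{E_1\vee_X E_2}(E_1\times_X E_2)$ is simply not defined, so your transformation is not constructed on all of $\mathcal{H}^{\operatorname{pt}}(-,F_1)\times\mathcal{H}^{\operatorname{pt}}(-,F_2)$. The same gap infects your fibrewise suspension, since you derive it by smashing with $S^1\times X\to X$: the trivial factor is sectioned, but $E\to X$ still is not.

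The local trivializations do not repair this: over each $U_i$ the basepoint of $F$ gives a section of $F\times U_i\to U_i$, but the transition equivalences $\phi_{ij}$ are only free self-equivalences of $F\times U_{ij}$ and need not preserve these local sections --- this is precisely the difference between $\haut(F)$ and its pointed analogue, so the smash cannot be applied fibrewise to the glueing data either. To make the argument work you must either (a) enrich the classified objects to sectioned (ex-)fibrations, which is how the sources the paper cites for this corollary (Rudyak, May) actually set up the fibrewise smash, and then explain why the resulting functor is still the one represented by $\classify{F}$; or (b) replace the smash by a genuinely section-free construction: the fibrewise join $E_1\ast_X E_2$, i.e.\ the double mapping cylinder of $E_1\leftarrow E_1\times_X E_2\rightarrow E_2$ over $X$, needs no sections and has fibre $F_1\ast F_2\simeq\Sigma(F_1\wedge F_2)$, and the unreduced fibrewise suspension (the join with $S^0\times X$) handles the suspension statement. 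Everything else in your outline --- representability of the product functor by $\classify{F_1}\times\classify{F_2}$, identification of the fibre by homotopy distributivity, local triviality, well-definedness via the cogluing lemma --- is fine once the construction itself is actually defined.
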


\section{Second Variant: Bar Construction}
\label{sec:two}

\renewcommand{\fibfun}[1]{\ensuremath{\mathcal{H}({#1},F)}}

In this section, we explain the second approach to the construction of
classifying spaces of fibre sequences. Again, this approach is a
direct generalization of results that are known for topological spaces
resp. simplicial sets. 
The first result in this direction is the work of Stasheff
\cite{stasheff:1963} which proves that fibrations over CW-complexes
with a given finite CW-complex as fibre can be classified by homotopy
classes of maps into some CW-complex. In fact the classifying space is
the classifying space of the topological monoid of homotopy
self-equivalences of the fibre. 
The main idea in this approach is the construction of an associated
principal bundle for a fibration. This associates to a fibration
$p:E\rightarrow B$ a new fibration
$\operatorname{Prin(p)}:\operatorname{Prin(E)}\rightarrow B$, whose
fibre have the homotopy type of the topological monoid of homotopy
self-equivalences. 

A vast generalization of this can be found in
\cite{may:1975:fibrations}. There, the double bar construction is used
to construct the classifying spaces for fibrations with given
fibre. Moreover, the notion of a category of fibres allows to classify
fibrations with global structures. Again the main point in proving
the classification theorem is a principalization construction which
associates to a fibration a principal bundle. 

These results can be translated to simplicial sheaves. One problem
that appears in this setting is that principalization does not work a
priori. The way around this is again the restriction to fibre
sequences which are locally trivial in a given topology. These
trivializations indeed allow to translate the principalization
construction. 

In the case of CW-complexes, the local triviality condition is
no restriction at all: every point has a contractible
neighbourhood $U$. For a fibration over such a contractible
neighbourhood, the inclusion of the fibre is a weak equivalence (in
fact a homotopy 
equivalence if all the spaces in sight are CW-complexes). This means
that there is a morphism (over $U$) $E\rightarrow F\times U$ which is a
weak equivalence. This provides the local trivializations in the case
of CW-complexes. In fact, it allows to construct a morphism from the
associated \v Cech-complex of a fine enough covering to the
classifying space of the monoid of self-equivalences -- for each
intersection $U_i\cap U_j$ of contractible neighbourhoods there is a
morphism $U_i\cap U_j\rightarrow \haut(F)$ corresponding to the
composition of the two trivializations over $U_i$ resp. $U_j$. The
cocycle condition is not satisfied on the nose, but up to
homotopy. Therefore, one obtains a morphism $U_i\cap U_j\cap U_k\times
I\rightarrow \haut(X)$ etc. Since the realization of the \v Cech
complex is homotopy equivalent to the CW-complex we started with, we
obtain a map in the homotopy category $B\rightarrow B\haut(F)$. This
is a slightly souped up version of the principalization construction,
which also works in the simplicial sheaf setting. Hopefully, it has
become clear with the above discussion that the local triviality
condition on the fibre sequences comes in rather naturally in the bar
construction approach.

\subsection{Fibre Sequence Functor}

We now define the functor which will be represented. In the case of
the bar construction, this functor is the unpointed analogue of the
one defined in \prettyref{def:fibfun}. It associates to an unpointed
simplicial sheaf $B$ the set of all locally trivial fibre sequences
over $B$ with fibre $F$. Therefore, it does not fix an equivalence
between $F$ and $p^{-1}(\ast)$. 

\begin{definition}
Recall the definition of locally trivial morphism with fibre $F$. 
Two locally trivial morphisms $p_1:E_1\rightarrow B$ and
$p_2:E_2\rightarrow B$ with fibre $F$ are said to be \emph{equivalent}
if there is a diagram in the homotopy category
\begin{center}
  \begin{minipage}[c]{10cm}
    \xymatrix{
      E_1\ar[rr]^\alpha \ar[d]_{p_1} && E_2 \ar[d]^{p_2}\\
      B\ar[rr]_\id &&B
    }
  \end{minipage}
\end{center}
where $\alpha$ is an isomorphism. We denote by $\fibfun{X}$ the set of
locally trivial morphisms over $X$ with fibre $F$ modulo the above
equivalence relation. 
\end{definition}

\begin{remark}
\begin{enumerate}[(i)]
\item Assuming that the $p_i$ are fibrations, we can use the homotopy
  lifting to obtain a morphism which respects fibres. So if $p_1$ and
  $p_2$ are equivalent, then there is an equivalence which respects
  the fibres.
\item The analogue of   \prettyref{prop:fibfunset} can be proved in
  complete analogy, we  omit the proof.
\item In case $X$ is actually pointed, we can obtain the set
  $\fibfun{X}$ by taking fibre sequences over $X$ modulo the
  equivalence relation given by ladder diagrams in the homotopy category
\begin{center}
  \begin{minipage}[c]{10cm}
    \xymatrix{
      F \ar[d]_\beta \ar[r] & E_1 \ar[d]^\alpha \ar[r] & X
      \ar[d]^{\id}\\ 
      F \ar[r] & E_2 \ar[r] & X,
    }
  \end{minipage}
\end{center}
where $\alpha$ and $\beta$ should be isomorphisms. 
\end{enumerate}
\end{remark}

\subsection{Remarks on Categories of Fibres}
In the following, we will not work in the full generality of
categories of fibres. Rather we will only consider the fibre sequences
which are locally trivial from \prettyref{def:fibfun}. However, we
want to make a few remarks on the possible definition of category of
fibres for simplicial sheaves. 

The original definition of categories of fibres can be found in
\cite{may:1975:fibrations}. 
A definition of categories of fibres in equivariant topology has been
given Waner \cite[Definition 1.1.1]{waner:1980:classify} resp. French   
\cite[Definition 3.1]{french:2003:jhom} for equivariant homotopy
theory. These definitions readily generalize to simplicial
sheaves. One should however note that equivariant topology is a
presheaf situation without a Grothendieck topology (at least in the
case of finite groups) -- in the full generalization it is therefore
necessary to include a localization condition. 

\begin{definition}[Category of Fibres]
Let $T$ be a site. A \emph{category of fibres} is a subcategory
$\mathcal{F}_T$ of the following category: 
\begin{itemize}
\item Objects are morphisms
$p:X\rightarrow U$ of simplicial sheaves, where $U$ is the constant
simplicial sheaf for a representable $U\in T$.
\item Morphisms are commutative diagrams
\begin{center}
  \begin{minipage}[c]{10cm}
    \xymatrix{
      X \ar[r] \ar[d] & X' \ar[d] \\
      U \ar[r] & U'
    }
  \end{minipage}
\end{center}
\end{itemize}
Additionally, we require that
\begin{enumerate}[(CFi)]
\item
The map $X\rightarrow U$ is required to be locally trivial in the
$T$-topology.
\item  For a morphism 
\begin{center}
  \begin{minipage}[c]{10cm}
    \xymatrix{
      X \ar[r] \ar[d] & X' \ar[d] \\
      U \ar[r] & U'
    }
  \end{minipage}
\end{center}
there is a $T$-covering $\bigsqcup U_i\rightarrow U'$ such that the
induced morphisms $X\times_{U'}U_i\rightarrow X'\times_{U'}U_i$
are weak equivalences of simplicial sheaves.
\end{enumerate}

As in the equivariant definitions of categories of fibres one wants to
have a simplicial sheaf $F$ which serves as a model for the fibres:
a corresponding category should contain at least the obvious objects
$p_2:F\times U\rightarrow U$ for $U\in T$, together with the obvious 
morphisms 
\begin{center}
  \begin{minipage}[c]{10cm}
    \xymatrix{
      F\times U_1 \ar[r]^{\id\times f} \ar[d]_{p_2} & F\times U_2
      \ar[d]^{p_2} \\ 
      U_1 \ar[r]_f & U_2
    }
  \end{minipage}
\end{center}
induced from $f:U_1\rightarrow U_2$ in $T$. 
\end{definition}

The notion of $\Gamma$-completeness which appears in the cited works
on categories of fibres basically state that the category of fibres
should be closed under fibrant replacements. This is needed since some
constructions (like glueing) yield quasi-fibrations instead of
fibrations, and one would like to replace them by fibrations without
losing the property that the fibres are elements in the category of
fibres. 

The basic definitions and results concerning categories of fibres and
their principalizations can then be translated from
e.g. \cite{french:2003:jhom}. As said before, we will only consider
locally trivial fibre sequences with given fibre, as defined in
\prettyref{def:fibfun}. It is easy to check that this definition can
be formulated as a special case of a category of fibres.

\subsection{Homotopy Self-Equivalences}
Most important for our studies in the sequel will be the simplicial
monoid of homotopy self-equivalences of a simplicial sheaf. This is
the obvious generalization of the homotopy self-equivalences of a
simplicial set. 

We first recall the definition of homotopy self-equivalences of
simplicial sets. For more details on function complexes of simplicial
sets, see \cite[Section I.5]{goerss:jardine:1999:simplicial}. Function
complexes in general model categories are constructed in
\cite{dwyer:kan:1980:hammock}. A general discussion about what is
known for the monoids of homotopy self-equivalences can be found in
\cite{rutter:1997}. 

\begin{definition}
\label{def:hautsimp}
Let $X$ be a fibrant simplicial set. Then there is a simplicial set
$\inthom(X,X)$ whose set of $n$-simplices is given by
\begin{displaymath}
\inthom(X,X)_n=\hom_{\simplicial{\set}}(X\times\Delta^n,X).
\end{displaymath}
This is a special case of function complexes of simplicial sets,
cf. \cite{goerss:jardine:1999:simplicial}. 
By standard facts on function complexes, there is a fibration 
\begin{displaymath}
\inthom(X,X)\rightarrow \inthom(\ast,X)\simeq X,
\end{displaymath}
therefore $\inthom(X,X)$ is also a fibrant simplicial set.

The monoid structure can be described as follows: for two maps
$f,g:\Delta^n\times X\rightarrow X$, their composition $f\circ g$ in
the monoid $\inthom(X,X)_n$ is given by 
\begin{displaymath}
f\circ g:\Delta^n\times X\stackrel{D\times \id}{\longrightarrow}
\Delta^n\times\Delta^n\times X\stackrel{\id\times g}{\longrightarrow}
\Delta^n\times X \stackrel{f}{\longrightarrow} X,
\end{displaymath}
where $D:\Delta^n\rightarrow \Delta^n\times\Delta^n$ is the diagonal
morphism on the standard $n$-simplex $\Delta^n$.

It is obvious that the simplicial subset of morphisms $X\rightarrow X$
which are weak equivalences is in fact a simplicial submonoid. 
The resulting monoid of homotopy self-equivalences is denoted by
$\haut(X)$. 

Note that this monoid is group-like since $X$ is cofibrant and
fibrant. In this case, a weak equivalence $f:X\rightarrow X$ is a
homotopy equivalence and therefore its class in $\pi_0 \inthom(X,X)$
has an inverse.  
\end{definition}

The general definition of homotopy self-equivalences in general model
category was given by Dwyer and Kan in
\cite{dwyer:kan:1980:hammock}. Their construction yields for an object
$X$ in a model category $\mathcal{C}$ a function complex
$\hom(X,X)$ which is a simplicial set. For simplicial sheaves, we
can additionally use the internal Hom to obtain a simplicial sheaf of
monoids of homotopy self-equivalences. 
It is explained in \cite[Remark 1.1.7, Lemma
1.1.8]{morel:voevodsky:1999:a1} that the category of simplicial
sheaves has internal hom-objects.

\begin{definition}
Let $T$ be a site, and let $X$ be a fibrant simplicial sheaf. We
define the sheaf of self-homotopy equivalences, which is a simplicial
sheaf of monoids. 
By \prettyref{thm:jardine}, the simplicial sheaves on $T$ form a
simplicial model category, hence for any two simplicial sheaves $X,Y$
there is a simplicial set, the function complex $\Hom(X,Y)$, whose
$n$-simplices are given by 
\begin{displaymath}
\Hom_{\simplicial{\topos{T}}}(X\times \Delta^n,Y).
\end{displaymath}

In particular, we have a contravariant functor 
\begin{displaymath}
T^{\operatorname{op}}\rightarrow \simplicial{\set}:(U\in T)\mapsto
\Hom_{\simplicial{\topos{T}}}(X\times U,X).
\end{displaymath} 
This functor is representable by a simplicial sheaf which we again
denote by 
$$\inthom_{\simplicial{\topos{T}}}(X,X). $$

We can define a subpresheaf by taking for $U\in T$ the subset of those
morphisms $\Hom_{\simplicial{\topos{T}}}(X\times U,X\times U)$ which
are weak equivalences of simplicial sheaves in
$\simplicial{\topos{T}}$. Note that this is indeed a sheaf because
weak equivalences are defined locally: given a covering $\bigsqcup
U_i\rightarrow U$ and weak equivalences $f_i:X\times U_i\rightarrow
X\times U_i$ which agree on the intersections, there is morphism
$f:U\rightarrow U$ which is a weak equivalence if all the $f_i$ are
weak equivalences.  

The resulting simplicial sheaf of monoids will be denoted by
$\haut(X)$. The monoid structure is again given by composition as in
\prettyref{def:hautsimp}. 

Note that the simplicial sheaf of monoids $\haut(X)$ is fibrant if $X$
is. This is a consequence of the simplicial model structure on
simplicial sheaves, cf. \cite[Proposition
II.3.2]{goerss:jardine:1999:simplicial}:
the morphisms $\inthom(X,X)\rightarrow \inthom(\ast,X)$ and
$\inthom(\ast,X)\rightarrow \inthom(\ast,\ast)\cong \ast$ induced
from the morphism $X\rightarrow \ast$ are fibrations if $X$ is
fibrant.
\end{definition}

\begin{lemma}
Let $X$ be a fibrant simplicial sheaf on the site $T$. Then $X$ is a
left $\haut(X)$ space, i.e. there is an
action 
\begin{displaymath}
\haut(X)\times X\rightarrow X.
\end{displaymath}
\end{lemma}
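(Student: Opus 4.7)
The plan is to construct the action as the evaluation (counit) map associated to the internal hom, restricted to the submonoid $\haut(X)$, and then verify that the monoid axioms are satisfied.

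First I would invoke the tensor-hom adjunction on $\simplicial{\topos{T}}$ which defines $\inthom_{\simplicial{\topos{T}}}(X,X)$: for every simplicial sheaf $Y$ there is a natural bijection
\begin{displaymath}
\Hom_{\simplicial{\topos{T}}}(Y\times X, X) \cong \Hom_{\simplicial{\topos{T}}}(Y, \inthom(X,X)).
\end{displaymath}
Applying this with $Y=\inthom(X,X)$ and transporting the identity yields the evaluation morphism $\operatorname{ev}:\inthom(X,X)\times X\rightarrow X$. By construction $\haut(X)$ is a simplicial subsheaf of $\inthom(X,X)$, so precomposing with the inclusion $\haut(X)\hookrightarrow \inthom(X,X)$ produces the desired morphism $\mu:\haut(X)\times X\rightarrow X$.

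Next I would verify that $\mu$ is indeed an action of the simplicial sheaf of monoids $\haut(X)$ on $X$. This amounts to checking commutativity of the two standard diagrams: the unit diagram (with the identity element $\pt\rightarrow \haut(X)$, which corresponds under the adjunction to $\id_X$) and the associativity diagram involving the monoid multiplication. Both reduce, via the adjunction, to identities among morphisms of the form $Z\times X\rightarrow X$. For the unit axiom, the unit $\pt\rightarrow \inthom(X,X)$ is the adjunct of $\id_X$, so $\operatorname{ev}\circ(\eta\times \id_X)=\id_X$ by the triangle identity for the adjunction. For associativity, one unwinds the definition of the monoid multiplication on $\inthom(X,X)$, which is described (as in \prettyref{def:hautsimp}) by the composite using the diagonal; the associativity of composition in $\inthom(X,X)$ then translates exactly into the required compatibility $\mu\circ(\mu\times \id_X)=\mu\circ(\id_{\haut(X)}\times \mu)$ after pulling back along the diagonal. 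Equivalently, one can verify this sectionwise: on $U\in T$, an $n$-simplex of $\haut(X)$ is a weak self-equivalence $X\times U\times \Delta^n\rightarrow X\times U\times \Delta^n$, and $\mu$ is literally composition, which is strictly associative and unital.

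The only mildly delicate point—and what I expect to be the main (though small) obstacle—is to check that $\mu$ factors through the subsheaf $\haut(X)\times X$ rather than requiring a larger subsheaf, and that the monoid axioms hold \emph{on the nose} (not merely up to homotopy), so that $X$ is a genuine $\haut(X)$-space in $\simplicial{\topos{T}}$ rather than only a homotopy-coherent one. Both are formal consequences of the fact that $\haut(X)$ was defined as a simplicial submonoid of $\inthom(X,X)$ with its strict composition product, so that $\mu$ inherits strict associativity and unitality from the strict monoid structure on $\inthom(X,X)$.
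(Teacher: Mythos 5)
Your proof is correct and matches the paper's (implicit) argument: the paper states this lemma without proof, only remarking afterwards that the action is given sectionwise by composition of self-equivalences and is a strict action in $\simplicial{\topos{T}}$, which is exactly the evaluation-map construction and sectionwise verification you carry out. The adjunction/counit formulation and the check of strict unitality and associativity are the right way to fill in the details the paper omits.
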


Note that if $X$ is fibrant, then a morphism $X\rightarrow X$ is a
weak equivalence if and only if the morphism induced on sections
$f(U):X(U)\rightarrow X(U)$ is a weak equivalence of simplicial sets
for all $U\in T$, cf. \cite[Lemma
I.1.10]{morel:voevodsky:1999:a1}. Therefore, $\haut(X)(U)$ acts on
$X(U)$ via homotopy self-equivalences of simplicial sets.
Note also that the action is really an action in
$\simplicial{\topos{T}}$, not just an action in the homotopy
category. 

\subsection{The Bar Construction}
We repeat the definition and basic properties of the 
bar construction following \cite{may:1975:fibrations}. Again the
setting changes from topological spaces to simplicial sheaves
without major complications, cf. also \cite[Example
4.1.11]{morel:voevodsky:1999:a1}.  

\begin{definition}[Two-sided geometric bar construction]
Let $G$ be a simplicial sheaf of monoids on the site $T$. We assume
that the inclusion of the identity $e\rightarrow G$ is a
cofibration. For the injective model structure, this is no problem
because every monomorphism is a cofibration. 
Let $X$ and $Y$ be simplicial sheaves, such that $X$ has a left
$G$-action and $Y$ has a right $G$-action.

Then there is a bisimplicial sheaf 
\begin{displaymath}
B_{n,m}(Y,G,X)=(Y\times G^n\times X)_m.
\end{displaymath}
For an object $U$ of the site $T$, we have
\begin{displaymath}
B_{n,m}(Y,G,X)(U)=B_{n,m}(Y(U),G(U),X(U)),
\end{displaymath}
 and functoriality of the
bar construction for simplicial sets provides the restriction maps to
turn this into a simplicial sheaf. Similarly, the face and
degeneracy maps are functorial, and hence provide $B_{n,m}(Y,G,X)$
above with the structure of bisimplicial sheaf.
The diagonal $B_{n,n}(Y,G,X)$ is a simplicial sheaf, which we will
denote $B(Y,G,X)$.
\end{definition}

The classifying spaces for simplicial sheaves of monoids can then
be obtained as $BG=B(\ast,G,\ast)$, and the universal $G$-bundle is
given by the obvious functoriality:
\begin{displaymath}
EG=B(\ast,G,G)\rightarrow B(\ast,G,\ast)=BG.
\end{displaymath}

The topology enters via a fibrant replacement: for a simplicial
sheaf $X$, any morphism $X\rightarrow BG$ in the homotopy category
can be represented up to homotopy by a morphism $X'\rightarrow BG$ for
some suitable trivial local fibration $X'\rightarrow X$. The notion of
trivial local fibration depends on the topology, as a trivial local
fibration is a morphism of simplicial sheaves which induces a
trivial Kan fibration of simplicial sets on the stalks.
The fibrant replacement may change the global sections of $B(Y,G,X)$,
but it does not change the homotopy types of the stalks, which
therefore can be described as the bar constructions for the simplicial
sets $p^\ast Y$, $p^\ast G$ and $p^\ast X$.

The following properties of the bar construction for simplicial
sheaves are direct consequences of the corresponding properties for
simplicial sets resp. topological spaces, cf. \cite[Section
7]{may:1975:fibrations}: 

\begin{proposition}
\begin{enumerate}[(i)]
\item The space $B(Y,G,X)$ is $n$-connected provided $G$ is
  $(n-1)$-connected and $X$ and $Y$ are $n$-connected. 
\item If $f_1:Y\rightarrow Y'$, $f_2:G\rightarrow G'$ and
  $f_3:X\rightarrow X'$ are weak equivalences of simplicial
  sheaves, then the morphism $f:B(Y,G,X)\rightarrow
  B(Y',G',X')$ is a weak equivalence. 
\item For $(Y,G,X)$ and $(Y',G',X')$ the projections define a natural
  weak equivalence 
\begin{displaymath}
B(Y\times Y',G\times G',X\times X')\rightarrow B(Y,G,X)\times B(Y',G',X').
\end{displaymath}
\item Let $f:H\rightarrow G$ be a morphism of simplicial sheaves of
  monoids, and let $k:Z\rightarrow Y$ be an equivariant morphism of
  right $G$-spaces. Then the following diagrams are pullbacks:
\begin{center}
  \begin{minipage}[c]{10cm}
    \xymatrix{
      B(Z,H,X) \ar[d]_p \ar[rr]^{B(k,f,\operatorname{id})} && 
      B(Y,G,X) \ar[d]^p \\
      B(Z,H,\ast) \ar[rr]_{B(k,f,\operatorname{id})} &&
      B(Y,G,\ast).
    }
  \end{minipage}
\end{center}
\begin{center}
  \begin{minipage}[c]{10cm}
    \xymatrix{
      B(Y,G,X) \ar[rr]^q \ar[d]_p && B(\ast,G,X)\ar[d]^p \\
      B(Y,G,\ast) \ar[rr]_q && BG
    }
  \end{minipage}
\end{center}
\end{enumerate}
\end{proposition}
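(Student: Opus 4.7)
The plan is to reduce each of the four properties to the corresponding statement for simplicial sets from \cite[Section 7]{may:1975:fibrations}. The key observation is that all four properties can be verified locally in a suitable sense: for (i), (ii), and (iii) this is because weak equivalences and sheafified connectivity are detected after Boolean localization (cf. \prettyref{thm:jardine} and \prettyref{prop:reflect}), while (iv) is a statement about ordinary (not homotopy) pullbacks that can be checked level-wise as simplicial sheaves.

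First I would verify that for any geometric morphism $p^\ast$, the inverse image commutes with the bar construction up to canonical isomorphism, i.e.\ $p^\ast B(Y,G,X) \cong B(p^\ast Y, p^\ast G, p^\ast X)$. This follows because the bar construction is built from finite products and the diagonal of a bisimplicial object, both of which are preserved by inverse image parts of geometric morphisms. Applying this to a Boolean localization (or to the stalks, when there are enough points), statements (i)--(iii) reduce immediately to their simplicial-set counterparts. For (i), sheafified connectivity is equivalent to stalkwise connectivity, and the simplicial-set version of (i) then applies. For (ii), we factor the given map as a composite $B(Y,G,X)\to B(Y',G,X)\to B(Y',G',X)\to B(Y',G',X')$ and check weak equivalence of each factor stalkwise. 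For (iii), the simplicial-set comparison map is known to be a weak equivalence and this is preserved by the local detection of weak equivalences.

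For (iv), I would argue that both squares are in fact pullbacks of bisimplicial sheaves on the nose, which then implies they are pullbacks of simplicial sheaves after applying the diagonal (since the diagonal functor, as a right adjoint to a Kan extension, preserves limits). At bisimplicial level $(n,m)$ and evaluated on any $U\in T$, the first square reads
\begin{displaymath}
\xymatrix{
Z_m(U)\times H_m(U)^n\times X_m(U) \ar[r]\ar[d] & Y_m(U)\times G_m(U)^n\times X_m(U)\ar[d]\\
Z_m(U)\times H_m(U)^n \ar[r] & Y_m(U)\times G_m(U)^n,
}
\end{displaymath}
which is visibly a pullback of sets, and the second square is handled analogously. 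Hence the diagrams are pullbacks in $\simplicial{\topos{T}}$.

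The main potential obstacle is ensuring that the reduction to the stalk-level statement is clean in parts (i) and (ii): one must be careful that the bar construction considered is the ``pre-fibrant'' one (so that the stalk-level identification $p^\ast B(Y,G,X) \cong B(p^\ast Y, p^\ast G, p^\ast X)$ is an honest isomorphism and not just a weak equivalence), and that connectivity and weak equivalence are read off from these stalks via \prettyref{thm:jardine}. Once this is set up, no further work beyond citing the simplicial-set facts is required.
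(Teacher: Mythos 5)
Your proposal is correct and follows essentially the same route as the paper, which simply observes that all four statements are of a local nature (checkable on stalks, or after Boolean localization) and cites Propositions 7.1, 7.3, 7.4 and 7.8 of May; your explicit verification that $p^\ast$ commutes with the bar construction is precisely the point the paper leaves implicit. The only divergence is in (iv), where you check the pullback directly at the level of bisimplicial sheaves evaluated on objects of $T$ rather than on stalks --- a slightly more elementary argument that works equally well since the diagonal preserves limits.
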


\begin{proof}
For (i), note that $n$-connectedness means that the homotopy group
\emph{sheaves} $\pi_i(B(Y,G,X))$  are trivial for $i\leq n$. In
particular, this does not imply that the simplicial sets $B(Y,G,X)(U)$
are $n$-connected for any $U\in T$. 

All four statements are of a local nature, i.e. can be checked on
stalks. The corresponding statements for topological spaces 
are Propositions 7.1, 7.3, 7.4 and 7.8 of \cite{may:1975:fibrations}.
\end{proof}

The following result is a version of 
\cite[Theorem 7.6, Proposition 7.9]{may:1975:fibrations} for
simplicial sheaves. It provides necessary fibre sequences for the
proof of the classification theorem. Note that for any simplicial
sheaf of monoids $M$, the monoid operation induces a monoid operation
on the sheaf $\pi_0 M$. We say that $M$ is \emph{grouplike} if this operation
turns $\pi_0M$ into a sheaf of groups.

\begin{theorem}
\label{thm:mayfib}
If $G$ is grouplike, there are fibre sequences of simplicial
sheaves 
\begin{enumerate}[(i)]
\item $X\rightarrow B(Y,G,X)\rightarrow B(Y,G,\ast)$,
\item $Y\rightarrow B(Y,G,X)\rightarrow B(\ast, G,X)$, and 
\item $G\rightarrow Y\rightarrow B(Y,G,\ast).$
\end{enumerate}
\end{theorem}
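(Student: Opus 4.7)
The strategy is to reduce all three statements to the corresponding classical theorems for simplicial sets, i.e.\ Theorem~7.6 and Proposition~7.9 of \cite{may:1975:fibrations}, via passage to a Boolean localization. The key point is that all of the constructions involved are natural in the ambient topos, so the property of being a fibre sequence can be checked after pullback along a surjective geometric morphism.

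First I would verify that the two-sided bar construction commutes with inverse images of geometric morphisms: for any $f:\mathcal{E}'\rightarrow \mathcal{E}$ there is a canonical isomorphism $f^\ast B(Y,G,X)\cong B(f^\ast Y, f^\ast G, f^\ast X)$ in $\simplicial{\mathcal{E}'}$. Indeed, in each bisimplicial degree $B_{n,m}(Y,G,X)$ is the finite product $Y_m\times G_m^n\times X_m$, which $f^\ast$ preserves because geometric morphisms preserve finite limits, while the diagonalisation is a colimit and hence also preserved since $f^\ast$ is a left adjoint. The structural maps appearing in (i)--(iii) are built from the augmentations $Y\rightarrow \pt$, $X\rightarrow \pt$, the $G$-actions, and the unit $e\rightarrow G$, all of which are natural in this way. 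Moreover, the grouplike hypothesis is preserved, since $f^\ast$ commutes with the formation of $\pi_0$ as a sheaf of monoids.

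Next I would reformulate the assertion that a diagram $F\rightarrow E\rightarrow B$ of pointed simplicial sheaves is a fibre sequence in the sense of \prettyref{def:fibreseq}: by \prettyref{prop:properfibre} this amounts to the statement that the square
\begin{center}
\begin{minipage}[c]{10cm}
\xymatrix{
F\ar[r]\ar[d] & E\ar[d] \\
\pt \ar[r] & B
}
\end{minipage}
\end{center}
is a homotopy pullback, once the relevant basepoints are chosen (using the unit of $G$ to basepoint $BG$ and the bar constructions). By \prettyref{prop:pullbackloc}, this property is reflected along any surjective geometric morphism.

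Choosing a Boolean localization $g:\mathcal{B}\rightarrow \mathcal{E}$, it therefore suffices to establish (i)--(iii) after applying $g^\ast$, i.e.\ inside $\simplicial{\mathcal{B}}$. By \prettyref{thm:jardine}, weak equivalences in $\simplicial{\mathcal{B}}$ are detected sectionwise, and each of the three sequences becomes, section by section, precisely the sequence of simplicial sets produced by May's two-sided bar construction applied to the triple $(g^\ast Y, g^\ast G, g^\ast X)$ restricted to that section. Since $g^\ast G$ remains grouplike, May's theorem yields a fibre sequence of simplicial sets in every section, and these assemble to a fibre sequence of simplicial objects of $\mathcal{B}$. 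Pulling back through \prettyref{prop:pullbackloc} then gives the fibre sequences (i)--(iii) in $\simplicial{\mathcal{E}}$. The main obstacle is purely bookkeeping: one has to match up the canonical structural maps in the bar construction with those used by May in order to apply his results verbatim in each section; once this identification is made, no further work is required.
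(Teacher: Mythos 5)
Your proposal is correct and follows essentially the same route as the paper: reduce to May's Theorem 7.6 and Proposition 7.9 for simplicial sets by observing that the bar construction commutes with inverse images and that fibre sequences (as homotopy pullbacks) are reflected along a surjective geometric morphism via \prettyref{prop:pullbackloc}. The only cosmetic difference is that you check the local statement on a Boolean localization while the paper phrases it in terms of stalks; the underlying mechanism is identical.
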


\begin{proof}
The corresponding statements for simplicial sets resp. topological
spaces can be found as \cite[Theorem 7.6, Proposition
7.9]{may:1975:fibrations}. The
corresponding statements are true for simplicial sheaves by
\prettyref{prop:pullbackloc}: 
everything that locally (i.e. on stalks) looks like a fibre sequence,
really is a fibre sequence.
\end{proof}

\subsection{The Classification Theorem}

Now we come to the proof of the classification theorem. The
classifying space is given by the bar construction $B(\ast,\haut(F),\ast)$ and
the universal fibre sequence is 
\begin{displaymath}
F\rightarrow B(\ast,\haut(F),F)\rightarrow B(\ast,\haut(F),\ast).
\end{displaymath}
It follows from the previous \prettyref{thm:mayfib} that this is
indeed a fibre sequence of simplicial sheaves. 

The following is a version of May's classification result
\cite[Theorem 9.2]{may:1975:fibrations} for simplicial sheaves.
The argument in the topological case can be found in
\cite{stasheff:1974,stasheff:wirth}. 

\begin{theorem}
\label{thm:classmay}
Let $T$ be a site, $F$ be a fibrant simplicial sheaf on $T$. Then
there is a natural isomorphism of functors 
\begin{displaymath}
\mathcal{H}(X,F)\cong [X,B(\ast,\haut(F),\ast)],
\end{displaymath}
where the right-hand side denotes the set of morphisms
\begin{displaymath}
X\rightarrow B(\ast,\haut(F),\ast)
\end{displaymath}
in the homotopy category. 
\end{theorem}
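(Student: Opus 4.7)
The plan is to exhibit the natural transformation in the forward direction by pulling back the universal fibre sequence $F \rightarrow B(\ast,\haut(F),F) \rightarrow B(\ast,\haut(F),\ast)$ from \prettyref{thm:mayfib}, then produce an inverse via a principalization construction. First I would check that this universal fibre sequence is in fact locally trivial with fibre $F$: since $B(\ast,\haut(F),F) = E\haut(F) \times_{\haut(F)} F$ (the associated Borel construction) and $E\haut(F) \rightarrow B\haut(F)$ is locally trivial with fibre $\haut(F)$ by construction of the bar construction, the associated fibration has fibre $F$ locally. Given any $f:X \rightarrow B(\ast,\haut(F),\ast)$, the pullback $f^\ast B(\ast,\haut(F),F) \rightarrow X$ is then a locally trivial morphism over $X$ with fibre $F$, and by \prettyref{prop:help1}(iii) this assignment descends to the homotopy category, yielding a natural map $\Phi : [X, B(\ast,\haut(F),\ast)] \rightarrow \fibfun{X}$.

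The heart of the proof is the construction of an inverse to $\Phi$ via principalization. Given a locally trivial morphism $p:E \rightarrow X$ with fibre $F$, define a simplicial sheaf $\operatorname{Prin}(E)$ over $X$ by taking for each $(Y \rightarrow X) \in \simplicial{\topos{T}}/X$ the simplicial set of weak equivalences $F \times Y \simeq Y \times^h_X E$ over $Y$. The local triviality assumption ensures that $\operatorname{Prin}(E) \rightarrow X$ is locally trivial with fibre $\haut(F)$ and carries a free right $\haut(F)$-action; that is, it is a principal $\haut(F)$-bundle. The standard argument for principal bundles (using \prettyref{thm:mayfib}(iii), i.e.\ the fibre sequence $\haut(F) \rightarrow E\haut(F) \rightarrow B\haut(F)$, together with Mather's cube theorem \prettyref{cor:mather}) produces a classifying map $c_E : X \rightarrow B(\ast,\haut(F),\ast)$, unique up to homotopy. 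One then checks that $E \simeq \operatorname{Prin}(E) \times_{\haut(F)}^h F \simeq c_E^\ast B(\ast,\haut(F),F)$ using the universal property of the bar construction as the homotopy quotient.

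To conclude that $\Phi$ is a bijection, one verifies that the assignment $E \mapsto c_E$ is well-defined on equivalence classes and inverse to $\Phi$. Surjectivity of $\Phi$ is immediate from $E \simeq c_E^\ast B(\ast,\haut(F),F)$. For injectivity, given $f, g : X \rightarrow B(\ast,\haut(F),\ast)$ with $f^\ast B(\ast,\haut(F),F) \sim g^\ast B(\ast,\haut(F),F)$, the associated principal bundles $\operatorname{Prin}(f^\ast)$ and $\operatorname{Prin}(g^\ast)$ are equivalent principal $\haut(F)$-bundles, and uniqueness of the classifying map for principal bundles forces $[f] = [g]$.

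The main obstacle is making the principalization construction rigorous in the simplicial sheaf setting, since one cannot talk pointwise about ``the fibre over $x$'' as in the topological case. The key technical input is homotopy distributivity (\prettyref{prop:hodistrib}): using the canonical homotopy colimit decomposition of \prettyref{lem:decompfib}, any locally trivial fibration is the homotopy colimit of its local trivializations over the simplex diagram $(T\times\Delta\downarrow X)$, and homotopy distributivity allows one to compute the associated principal bundle and the classifying cocycle coherently. When $T$ has enough points, this also means both $\fibfun{X}$ and $[X,B(\ast,\haut(F),\ast)]$ can be computed on stalks via \prettyref{prop:pullbackloc} and \prettyref{prop:reflect}, reducing the bijection to May's classical classification theorem \cite[Theorem 9.2]{may:1975:fibrations} for simplicial sets.
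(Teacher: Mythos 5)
Your overall architecture (pull back the universal fibre sequence $F\rightarrow B(\ast,\haut(F),F)\rightarrow B(\ast,\haut(F),\ast)$ in one direction, principalize in the other) is the right one and matches the paper's. But the inverse direction as you have written it rests on an unproved intermediate theorem. You define $\operatorname{Prin}(E)$, assert it is a principal $\haut(F)$-bundle, and then invoke ``the standard argument for principal bundles'' to produce a classifying map $c_E$ ``unique up to homotopy'', and later use ``uniqueness of the classifying map for principal bundles'' to get injectivity. In the simplicial-sheaf setting there is no off-the-shelf classification of principal $\haut(F)$-bundles to appeal to: establishing that $B(\ast,\haut(F),\operatorname{Prin}(E))\rightarrow X$ is a weak equivalence, that the resulting map $X\rightarrow B(\ast,\haut(F),\ast)$ is independent of choices, and that equivalent bundles give homotopic classifying maps is essentially the whole content of the theorem, and your proposal does not indicate how homotopy distributivity is actually deployed to prove these statements (you name it as ``the key technical input'' but never use it in a specific step). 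There are also unaddressed issues in the definition of $\operatorname{Prin}(E)$ itself: the $\haut(F)$-action on a space of trivializations is free and transitive only up to homotopy, so ``principal bundle'' needs a homotopical formulation before the bar-construction comparison $E\simeq\operatorname{Prin}(E)\times^h_{\haut(F)}F$ can even be stated. The paper avoids this by constructing the classifying map concretely: a local trivialization gives transition morphisms $U_i\times_X U_j\rightarrow\haut(F)$ satisfying the cocycle condition up to coherent homotopy, hence a map from a hypercovering $U_\bullet\rightarrow X$ to $B(\ast,\haut(F),\ast)$; well-definedness comes from filteredness of the category of hypercoverings, and both composites are checked to be the identity by direct inspection over the trivializing hypercover.

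Your closing reduction is a genuine error: the bijection $\mathcal{H}(X,F)\cong[X,B(\ast,\haut(F),\ast)]$ cannot be ``computed on stalks'' and deduced from May's theorem for simplicial sets. \prettyref{prop:pullbackloc} and \prettyref{prop:reflect} let you check \emph{weak equivalences} and \emph{homotopy pullbacks} stalkwise, but a set of homotopy classes of maps, or of equivalence classes of locally trivial morphisms, is a global invariant and is not determined by its stalks. This is exactly the distinction the paper draws in the introduction between the stalkwise (Dwyer--Kan style) classification and the internal classification being proved here; collapsing the two would make the whole hypercover/cocycle apparatus unnecessary, which it is not.
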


\begin{proof}
The universal fibre sequence is 
\begin{displaymath}
F\rightarrow B(\ast,\haut(F),F)\rightarrow B(\ast,\haut(F),\ast).
\end{displaymath}
We can replace this by an honest fibration of fibrant simplicial
sheaves whose fibre is weakly equivalent to $F$. This can be viewed as
an element of 
$$\mathcal{H}(B(\ast,\haut(F),\ast),F)$$
which we denote by $\pi$.

(i)
Now we define a natural transformation
\begin{displaymath}
\Psi:[X,B(\ast,\haut(F),\ast)]\rightarrow \mathcal{H}(X,F):f\mapsto
f^\ast \pi
\end{displaymath}
This is well-defined and natural by \prettyref{prop:help1}.

(ii)
In the other direction, we define 
\begin{displaymath}
\Phi:\mathcal{H}(X,F)\rightarrow [X,B(\ast,\haut(F),\ast)]
\end{displaymath}
via the following principalization construction. 
Let $F\rightarrow E\rightarrow X$ be a fibre sequence in
$\mathcal{H}(X,F)$. By assumption, this is locally trivial, i.e. 
there exists a covering $\bigsqcup U_i\rightarrow X$ such that 
$E\times_X U_i\simeq F\times U_i$. 

By composition of the two trivializations for $U_i, U_j$, we obtain a
weak equivalence over $U_i\times_XU_J$: 
\begin{displaymath}
\phi_{ij}:F\times (U_i\times_XU_j) \rightarrow F\times (U_i\times_XU_j),
\end{displaymath}
which corresponds to a morphism $U_i\times_X U_j \rightarrow \haut(F)$.

Then there is a diagram of weak equivalences 
\begin{center}
  \begin{minipage}[c]{10cm}
    \xymatrix{
      F\times (U_i\times_XU_j\times_X U_k) \ar[rr]^{\phi_{ij}} 
      \ar[drr]_{\phi_{ik}} && 
      F\times (U_i\times_XU_j\times_X U_k) \ar[d]^{\phi_{jk}} \\
      &&F\times (U_i\times_XU_j\times_X U_k).
    }
  \end{minipage}
\end{center}
This diagram is not commutative but commutative up to homotopy, hence
gives rise to a morphism $U_i\times_X U_j\times_XU_k\times\Delta^1
\rightarrow \haut(F)$.

In the usual way, we obtain a $T$-hypercovering $U_\bullet\rightarrow
X$ and a morphism of simplicial sheaves $U_\bullet\rightarrow
B(\ast,\haut(F),\ast)$. This is indeed a morphism 
$$X\rightarrow B(\ast,\haut(F),\ast)$$ 
in the homotopy category because hypercoverings are locally trivial
fibrations.  

This is well-defined, since the category of hypercoverings is
filtered. For any two hypercoverings $U_\bullet$ and $U_\bullet'$ and
maps $U_\bullet\rightarrow B(\ast,\haut(F),\ast)$, there is a
refinement $V_\bullet$ of both $U_\bullet$ and $U'_\bullet$ and a
homotopy between the two corresponding maps $V_\bullet\rightarrow
U_\bullet \rightarrow B(\ast,\haut(F),\ast)$ and $V_\bullet\rightarrow
U'_\bullet \rightarrow B(\ast,\haut(F),\ast)$. For the basic
assertions concerning hypercovers, see \cite{friedlander:1982}. 

(iii) The composition $\Psi\circ\Phi$ is the identity on
$\mathcal{H}(X,F)$. This means that a fibre sequence $F\rightarrow
E\rightarrow X$ is equivalent to $f^\ast\pi$ for $f:X\rightarrow
B(\ast,\haut(F),\ast)$ the morphism constructed in (ii). By
\prettyref{prop:help1}, it suffices to check this for the
hypercovering $U_\bullet$. But since the fibre sequence over
$U_\bullet$ is explicitly trivialized, the principalization consists
of replacing $F\times U_i$ with $\haut(F)\times U_i$. The pullback of
the universal fibre sequence along $F$ replaces $\haut(F)$ again by
$F$. Hence $\Psi\circ\Phi$ is the identity.

(iv) The composition $\Phi\circ \Psi$ is the identity on
$[-,B(\ast,\haut(F),\ast)]$. Any map in the homotopy category from $X$
to $B(\ast,\haut(F),\ast)$ can be represented by a hypercovering
$U_\bullet\rightarrow X$ and a morphism $U_\bullet\rightarrow 
B(\ast,\haut(F),\ast)$. This hypercovering trivializes the
corresponding fibre sequence, and the associated principal
$\haut(F)$-bundle is obtained by replacing $F$ by $\haut(F)$ as in
(ii). The resulting map $f:U_\bullet\rightarrow B(\ast,\haut(F),\ast)$
is the map we started with.
\end{proof}

\begin{remark}
In case $X$ is pointed, the relation between the classifying spaces
constructed in \prettyref{sec:one} and \prettyref{sec:two} is as
follows: the global sections of the sheaf $\pi_0\haut(F)$ act on the
set $\mathcal{H}^{\operatorname{pt}}(X,F)$, and the quotient modulo
this action is $\fibfun{X}$. We can not state a more general result as
there are simplicial presheaves which can not be pointed because they
do not have global sections. 
\end{remark}

\end{document}